\numberwithin{equation}{section}
\newcommand{\bQ}{\mathbb{Q}}
\newcommand{\cY}{\mathcal{Y}}
\newcommand{\Oo}{\mathcal{O}}
\newcommand{\cW}{\mathcal{W}}
\newcommand{\Supp}{\operatorname{Supp}}
\newcommand{\mult}{\operatorname{mult}}
\newcommand{\Pp}{\mathbb{P}}
\newcommand{\Qq}{\mathbb{Q}}
\newcommand{\Rr}{\mathbb{R}}
\newcommand{\F}{\mathbb{F}}
\newcommand{\Zz}{\mathbb{Z}}
\newcommand{\I}{\mathcal{I}}
\newcommand{\bM}{\mathbf{M}}
\newcommand{\cX}{\mathcal{X}}
\newcommand{\cB}{\mathcal{B}}
\newcommand{\alct}{a\text{-}\operatorname{lct}}
\newcommand{\WDiv}{\operatorname{WDiv}}
\newcommand{\lct}{\operatorname{lct}}
\newtheorem{thm}{Theorem}[section]
\newtheorem{conj}[thm]{Conjecture}
\newtheorem{cor}[thm]{Corollary}
\newtheorem{lem}[thm]{Lemma}
\newtheorem{prop}[thm]{Proposition}
\theoremstyle{definition}
\newtheorem{defn}[thm]{Definition}
\newtheorem{rem}[thm]{Remark}
\theoremstyle{definition}
\begin{document}

\title{On multiplicities of fibers of Fano fibrations}

\author{Guodu Chen and Chuyu Zhou}

\address{School of Mathematical Sciences, Shanghai Jiao Tong University, 800 Dongchuan RD Shanghai, Minhang District Shanghai 200240, China}
\email{chenguodu@sjtu.edu.cn}

\address{Department of Mathematics, Yonsei University, 50 Yonsei-ro, Seodaemun-gu, Seoul 03722, Republic of Korea}
\email{chuyuzhou1@gmail.com}

\begin{abstract}
In this note, we reduce various conjectures in birational geometry, including Shokurov's conjecture on singularities of the base of log Calabi-Yau  fibrations of Fano type and boundedness conjecture for rationally connected Calabi-Yau varieties, to a conjecture on multiplicities of fibers of Fano fibrations over curves.
\end{abstract}

\date{\today}

\subjclass[2020]{14E30, 14J17, 14J27, 14J32, 14J45}
\thanks{%2010 \emph{Mathematics Subject Classification}: 14E30, 14J45, 14J17.
\emph{Keywords}: log Calabi-Yau fibration, Fano fibration, rationally connected Calabi-Yau variety, boundedness.
}

\maketitle

\pagestyle{myheadings}\markboth{\hfill G. Chen and C. Zhou \hfill}{\hfill On  multiplicities of fibers of Fano fibrations \hfill}

\tableofcontents

\section{Introduction}

We work over the complex number field $\mathbb{C}$. 

As the outcome of minimal model program, log Calabi-Yau fibrations and Mori-Fano fibrations play important roles in the classification of algebraic varieties, and it is natural to ask how the singularities behave on the total space and the base of these fibrations (e.g. \cite{Bir18}). For Fano varieties, people have recently established many powerful theory including boundedness, complement theory (e.g. \cite{Bir19, Bir21}), and K-moduli theory (e.g. \cite{Xu21}). For Calabi-Yau varieties, however, many aspects are still mysterious, e.g. boundedness. The following conjecture due to M\textsuperscript{c}kernan-Shokurov on the singularities of the base of log Calabi-Yau fibrations of Fano type is still widely open, we refer the readers to \cite{Bir16,Bir18,HJL22} for the recent process.

\begin{conj}[M\textsuperscript{c}kernan-Shokurov]\label{conj:SM}
Let $d$ be a positive integer and $\epsilon$ a positive real number. Then there exists a positive real number $\delta$ depending only on $d$ and $\epsilon$ satisfying the following. 
	
Assume that $(X,B)$ is a log pair of dimension $d$ and $\pi:X\to Z$ is a contraction between projective normal varieties such that 
\begin{enumerate}
  \item $(X,B)$ is $\epsilon$-lc,
  \item $X$ is of Fano type over $Z$, and
  \item $K_X+B\sim_{\Rr,Z}0$.	
\end{enumerate}
Then $B_Z\le 1-\delta$, where $B_Z$ is the discriminant part of the canonical bundle formula for $(X,B)$ over $Z$. %	we can choose $M_Z\ge0$ representing the moduli part such that $(Z,B_Z+M_Z)$ is $\delta$-lc. 
\end{conj}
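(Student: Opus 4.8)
The plan is to deduce Conjecture~\ref{conj:SM} from the conjectural boundedness of multiplicities of fibers of Fano fibrations over curves. First one reduces to the case where the base is a curve: the coefficient $\mult_D B_Z$ of a prime divisor $D\subseteq Z$ in the discriminant part is computed at the generic point of $D$, so it suffices to produce a uniform $\delta=\delta(d,\epsilon)>0$ bounding this single coefficient. Restricting $\pi$ over a general complete intersection curve $C\subseteq Z$ through a general point $z\in D$ and meeting $D$ transversally (equivalently, passing to a DVR dominating $\eta_D$), and letting $X_C\to C$ be the base change with $B_C$ the restriction of $B$, a Bertini-type argument keeps $(X_C,B_C)$ $\epsilon$-lc, keeps $X_C$ of Fano type over $C$, keeps $K_{X_C}+B_C\sim_{\Rr,C}0$, and preserves the discriminant coefficient at $z$; passing moreover to a $\bQ$-factorial crepant model (which leaves the discriminant unchanged) we may assume $Z=C$ is a smooth curve, $D=\{z\}$, and $X$ is $\bQ$-factorial.

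Now write $\pi^*z=\sum_i m_i F_i$, with $F_i$ the reduced irreducible components of $\pi^{-1}(z)$ and $m_i=\mult_{F_i}\pi^*z$. Running a minimal model program for $K_X$ over $C$ --- which, as $K_X+B\sim_{\Rr,C}0$, consists of $(K_X+B)$-crepant steps and hence changes neither the $\epsilon$-lc property nor the discriminant over $C$ --- and, if necessary, an induction on $\dim X$ using the natural generalized-pairs version of the statement, reduce us to the case where $\pi\colon X\to C$ is a Fano fibration over the curve in the sense of the multiplicity conjecture; that conjecture then gives $N=N(d,\epsilon)$ with $m_i\le N$ for all $i$, and $\epsilon$-lc-ness together with boundedness of Fano type fibrations bounds the number of the $F_i$. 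One has $\mult_z B_Z=1-t_0$, where $t_0=\lct_{\eta_z}(X,B;\pi^*z)=\min_S \frac{a(S;X,B)}{\mult_S\pi^*z}$, the minimum over prime divisors $S$ over $X$ with center inside $\pi^{-1}(z)$. For $S=F_i$ this ratio is $(1-\mult_{F_i}B)/m_i\ge\epsilon/N$ since $(X,B)$ is $\epsilon$-lc, so the task is to show the divisor actually computing $t_0$ --- which a priori sits on some log resolution and may have large multiplicity along $\pi^*z$ --- can be replaced by one of controlled multiplicity. For this one performs semistable (toroidal) reduction along the central fiber, or equivalently a base change $C'\to C$ totally ramified at $z$ of degree equal to the least common multiple of the $m_i$ (which is bounded, and under which the $\epsilon$-lc property degrades only to a bounded $\epsilon'$): the central fiber becomes reduced and bounded, so its log canonical threshold against $(X',B')$ is bounded below, and unwinding the discriminant's transformation under the ramified base change yields $\mult_z B_Z\le 1-\delta$ for an explicit $\delta=\delta(d,\epsilon)>0$.

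The genuinely hard input is of course the multiplicity conjecture itself --- precisely the ``hard core'' the paper isolates --- against which nothing in the above is a substitute. Within the reduction, the main obstacle is the last step: turning a bound on the multiplicities of the honest fiber components into a bound on the log canonical threshold $t_0$ requires controlling, along $\pi^*z$, the multiplicities of the exceptional divisors introduced in resolving, and tracking carefully how the $\epsilon$-lc condition, the boundary $B$, and the Fano type property behave under the auxiliary ramified base change (or under an MMP compatible with the fibration). The other steps --- localization to a curve, and the crepant MMP reducing to a Fano fibration --- are comparatively routine.
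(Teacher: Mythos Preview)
Your localization to a curve base is fine and matches the paper (which just cites \cite[Lemma~3.2]{Bir16} for this). The substantive gap is in the last step. You apply Conjecture~\ref{conj:bddfiber} to the \emph{original} fibration $\pi:X\to C$, bounding the multiplicities $m_i$ of the actual fiber components, and then propose to promote this to a lower bound on $t_0=\lct(X,B;\pi^*z)$ via a totally ramified base change of degree $\lcm(m_i)$. None of the claims in that step are justified: the degree needed to make the central fiber reduced (let alone semistable) is governed by the full resolution data of $(X,B+\pi^*z)$, not by the $m_i$ alone; there is no argument that any controlled $\epsilon'$-lc property survives normalization after the base change; and even if the new central fiber were reduced and ``bounded'', you have not explained why its lc threshold against the new pair is bounded below. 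Concretely, if all $m_i=1$ your base change is the identity, yet nothing you have said prevents the reduced divisor $\pi^*z$ from having a point of high multiplicity, so that $t_0$ is small --- the whole difficulty is precisely that $t_0$ is computed by a divisor on a resolution whose multiplicity along $\pi^*z$ is not controlled by the $m_i$. Your auxiliary $K_X$-MMP and the appeal to ``boundedness of Fano type fibrations'' do not touch this issue.

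The paper's reduction avoids this by applying Conjecture~\ref{conj:bddfiber} not to $X$ but to a crepant model on which the threshold-computing divisors have already been extracted. One sets $t:=\tfrac{\epsilon}{2}\text{-}\lct(X,B;\pi^*z)$, takes a log resolution $\phi:W\to X$ of $(X,B+\pi^*z)$, singles out those exceptional primes $E_i$ with log discrepancy exactly $\tfrac{\epsilon}{2}$ for $(X,B+t\pi^*z)$, and runs an MMP over $X$ that contracts the remaining exceptional divisors, producing $\phi':X'\to X$ with $(X',\Delta')$ $\tfrac{\epsilon}{2}$-lc, $K_{X'}+\Delta'\sim_{\Qq,C}0$, and $X'$ still of Fano type over $C$. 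Now the $E_i'$ are honest components of $\pi'^*z$, so Conjecture~\ref{conj:bddfiber} (with $\tfrac{\epsilon}{2}$ in place of $\epsilon$) bounds their multiplicities $n_i\le m=m(d,\epsilon)$. Writing $a_i\ge\epsilon-1$ for the ordinary discrepancy of $E_i$ over $(X,B)$, the crepant relation $t\,n_i-a_i=1-\tfrac{\epsilon}{2}$ gives $t\ge\epsilon/(2m)$ immediately. The idea missing from your sketch is exactly this: instead of bounding the fiber of $X$ and then trying to climb up to the divisor computing $t_0$ via base change, one pulls that divisor down onto a model and bounds the fiber \emph{there}.
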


The above conjecture is known in some special cases. For example, if one requires that the coefficients of the boundary $B$ are bounded from below, then the general fibers of the log Calabi-Yau fibrations lie in a log bounded family, and this leads to a positive answer to the conjecture (see \cite[Theorem 1.4]{Bir16}). In Birkar's solution to this case, the key idea is to show that the multiplicities of fibers are bounded. Then it is also natural to ask the following question.

\begin{conj}\label{conj:bddfiber}
Let $d$ be a positive integer and $\epsilon$ a positive real number. Then there exists a positive integer $m$ depending only on $d$ and $\epsilon$ satisfying the following. 
	
Assume that $(X,B)$ is a log pair of dimension $d$ and $\pi:X\to C$ is a contraction such that 
\begin{enumerate}
  \item $(X,B)$ is $\epsilon$-lc,
  \item $X$ is of Fano type over $C$ where $C$ is a curve, and
  \item $K_X+B\sim_{\Rr,C}0$.	
\end{enumerate}
Then $m_i\le m$ for every $i$, where $\pi^*z=\sum_i m_iF_i$, $F_i$ are the irreducible components of $\pi^*z$, and $z\in C$ is a closed point. 
\end{conj}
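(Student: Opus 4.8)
The plan is to localize at the point $z$, reformulate the desired bound through the canonical bundle formula, and then reduce it to a boundedness statement for the general fibre.

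\textbf{Reductions and reformulation.} First I would replace $X$ by a small $\mathbb{Q}$-factorialization---this changes neither $\pi^{*}z$ nor the $\epsilon$-lc condition and preserves being of Fano type over $C$---so that $X$ is $\mathbb{Q}$-factorial, and then shrink $C$ around $z$ so that $z$ is principal and $\pi^{*}z=\sum_i m_iF_i$ is a principal divisor on $X$; it then suffices to bound $m:=\max_i m_i$ in terms of $d$ and $\epsilon$. Applying the canonical bundle formula to $(X,B)$ over $C$, the coefficient of $z$ in the discriminant $B_C$ is $1-t_z$, where $t_z$ is the lc threshold of $\pi^{*}z$ with respect to $(X,B)$ over $z$; since $\mult_{F_i}(B+t_z\pi^{*}z)=\mult_{F_i}B+t_zm_i\le 1$ and $\mult_{F_i}B\ge 0$, one has $t_z\le 1/m_i$ for every $i$, hence $m\le 1/t_z$. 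A lower bound for $t_z$ is thus enough, but that is exactly the base-curve case of Conjecture \ref{conj:SM}, which is the sort of statement one ultimately wants to deduce; so I would instead bound $m$ directly. I would keep in mind that a large $m_i$ means precisely that every curve through a general point of $F_i$ not contained in $\pi^{-1}(z)$ is ramified over $z$ to order at least $m_i$; there is therefore no short multisection of $\pi$ through $F_i$, and the bound must come from the global log Calabi--Yau and Fano type structure rather than from local geometry.

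\textbf{Boundedness of the general fibre.} Let $F$ be a general fibre and $B_F:=B|_F$, so $(F,B_F)$ is $\epsilon$-lc with $K_F+B_F\sim_{\Rr}0$; restricting to $F$ a boundary witnessing the Fano type hypothesis over $C$ shows that $F$ is of Fano type, hence rationally connected. The key step would be to show that the pairs $(F,B_F)$---at least up to their coefficients---vary in a bounded family depending only on $d$ and $\epsilon$. Granting this, the behaviour of $\pi$ near $z$ is a one-parameter degeneration inside a bounded family: after a cyclic base change $C'\to C$ totally ramified over $z$ and passing to the normalization of $X\times_C C'$, the central fibre can be made reduced, and boundedness of the family forces the order of the monodromy of $X/C$ around $z$, and hence $m$, to be bounded---equivalently, a bounded family over a curve carries multisections of bounded degree meeting a general point of every fibre component, which bounds $m$ by the first step. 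This is precisely the mechanism available when the coefficients of $B$ are bounded below by a fixed $\tau>0$: the general fibres then form a log bounded family by \cite[Theorem 1.4]{Bir16}, and this is why Conjecture \ref{conj:bddfiber} is known in that range.

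\textbf{The hard part.} The main obstacle is exactly the required boundedness of $(F,B_F)$ when no lower bound on the coefficients of $B$ is imposed: an $\epsilon$-lc log Calabi--Yau pair $(F,B_F)$ with $F$ of Fano type need not lie in a bounded family once the coefficients of $B_F$ accumulate at $0$, and this is the same phenomenon that keeps Conjecture \ref{conj:SM} open. The obvious attempts to bypass it all fail: passing to an $n$-complement loses the $\epsilon$-lc property; approximating $B$ by a divisor with coefficients in a fixed finite set can move the multiplicities $m_i$; and running a relative minimal model program to reach a Mori fibre space may produce a fibration over an intermediate base rather than over $C$ and may create new fibre components. I would therefore expect a complete proof to require genuinely new input---for instance an effective bound for the denominator of the moduli part of the canonical bundle formula of $(X,B)/C$ (a relative dimension $d-1$ case of the Prokhorov--Shokurov conjecture), or an unconditional boundedness theorem for $\epsilon$-lc log Calabi--Yau pairs of the relevant dimension---so that Conjecture \ref{conj:bddfiber}, despite being a statement over a curve, still carries the essential difficulty of controlling log canonical thresholds with floating coefficients.
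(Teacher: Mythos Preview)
The statement you were given is Conjecture~\ref{conj:bddfiber}, and the paper does \emph{not} prove it: it is stated as an open conjecture, and the paper's contribution is to show that it implies Conjectures~\ref{conj:SM}, \ref{conj:cybdd}, and \ref{conj:gcybdd} (Propositions~\ref{1.2 to 1.1}, \ref{1.2 to 1.3}, \ref{1.2 to 1.4}), and to further reduce it to the Fano-fibration statement Conjecture~\ref{conj:fano fibration} (Proposition~\ref{further reduction}). There is therefore no proof in the paper to compare your proposal against.

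Your write-up is, to your credit, honest about this: the final paragraph explicitly says that a complete proof would require ``genuinely new input'' and that the conjecture ``still carries the essential difficulty of controlling log canonical thresholds with floating coefficients''. That diagnosis is correct and matches the paper's own discussion. The paper notes (paragraph after Conjecture~\ref{conj:SM}) that when the coefficients of $B$ are bounded below, the general fibres lie in a log bounded family and the multiplicity bound follows from \cite[Theorem~1.4]{Bir16}; you identify the same mechanism and the same obstruction when the coefficient bound is dropped. Your observation that bounding $m_i$ is essentially equivalent to the base-curve case of Conjecture~\ref{conj:SM} is also exactly what the paper uses in the reverse direction (Proposition~\ref{1.2 to 1.1}).

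In short: there is no gap to name because you did not claim to have a proof, and your assessment of the state of the problem is in line with the paper's.
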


It is clear that Conjecture \ref{conj:bddfiber} is a direct corollary of Conjecture \ref{conj:SM}. As we will see later, Conjecture \ref{conj:SM} provides a suitable inductive frame to prove boundedness results of some special kind of Calabi-Yau varieties (e.g. Corollary \ref{cor:basesing}).

We recall that a variety $X$ is said to be \emph{rationally connected} (RC for short) if any two general points can be connected by the image of a rational curve.

\begin{conj}\label{conj:cybdd}
Let $d$ be a positive integer and $\epsilon$ a positive real number. Then the set of rationally connected projective varieties $X$ such that $\dim X=d$, $(X,B)$ is $\epsilon$-lc and $K_X+B\sim_\Rr 0$ for some boundary $B$, is bounded up to flops.
\end{conj}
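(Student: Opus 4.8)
The plan is to prove Conjecture~\ref{conj:cybdd} by induction on $d$, assuming Conjecture~\ref{conj:SM} (which, by the reduction carried out in this paper, is implied by Conjecture~\ref{conj:bddfiber}); it is convenient to strengthen the inductive hypothesis so that it applies to \emph{generalized} polarized log Calabi--Yau pairs $(Z,B_Z+M_Z)$ with $Z$ rationally connected, so that the base of a fibration can be fed back into the induction. The base case $d=1$ is trivial ($X\cong\Pp^1$). For the inductive step, fix $(X,B)$ as in the statement. Replacing $X$ by a small $\Qq$-factorialization (harmless up to flops) we may assume $X$ is $\Qq$-factorial; it is klt since it is $\epsilon$-lc. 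Since $X$ is rationally connected, $-K_X\sim_{\Rr}B\ge 0$ is a nonzero pseudo-effective class, while $K_X$ itself is not pseudo-effective (otherwise $-K_X\sim_{\Rr}B$ would force $K_X\equiv 0$, hence $B=0$, contradicting rational connectedness). Hence a $K_X$-MMP with scaling terminates with a Mori fiber space $\phi\colon X'\to Z$, where $X\dashrightarrow X'$ is a birational contraction; the general fiber $F$ of $\phi$ is a $\Qq$-factorial $\epsilon$-lc Fano variety (a $K_X$-MMP does not decrease the discrepancies of $(X,0)$), and $Z$, being dominated by $X$, is rationally connected of dimension $<d$.

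If $\dim Z=0$ then $X'$ is a $\Qq$-factorial $\epsilon$-lc Fano variety of dimension $d$, hence lies in a bounded family by Birkar's solution of the BAB conjecture \cite{Bir19}. If $\dim Z\ge 1$, apply the canonical bundle formula to $(X,B)$ over $Z$ (on a common resolution of $X\dashrightarrow Z$) to obtain a generalized pair $(Z,B_Z+M_Z)$ with $M_Z$ nef, $K_Z+B_Z+M_Z\sim_{\Rr}0$, and $(X,B)$ crepant over the generic point of $Z$. By Conjecture~\ref{conj:SM} we get $B_Z\le 1-\delta$ for some $\delta=\delta(d,\epsilon)>0$, i.e. $(Z,B_Z+M_Z)$ is generalized $\delta$-lc; since $Z$ is rationally connected of dimension $<d$, the induction hypothesis shows $Z$ lies in a bounded family, with the moduli part $M_Z$ descending to an effective divisor on a bounded model. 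The general fibers $F$ are $\epsilon$-lc Fano of bounded dimension, hence bounded by \cite{Bir19}; combining boundedness of $Z$, boundedness of the fibers, boundedness of $\Rr$-complements for the Fano fibration $X'\to Z$ (so that $-K_{X'/Z}$, suitably perturbed, becomes relatively semiample with controlled defining data), and effective birationality, one concludes that $X'$ lies in a bounded family. This closes the induction for the Mori fiber space models; the same circle of ideas, run with $Z$ already known to be bounded, is also what yields the boundedness statements for bases of log Calabi--Yau fibrations, e.g. Corollary~\ref{cor:basesing}.

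The remaining --- and hardest --- point is to pass from the Mori fiber space model $X'$ back to the original $X$ up to flops. The birational contraction $X\dashrightarrow X'$ is a composition of $K_X$-flips and $K_X$-divisorial contractions, which are \emph{not} crepant for $(X,B)$, so $X$ and $X'$ need not be isomorphic in codimension one; one must show that the divisors contracted by $X\dashrightarrow X'$ are controlled --- their log discrepancies lie in $(\epsilon,1)$, and (this is the crux) their number and the stratification of their centers on $X'$ are bounded in terms of $d$ and $\epsilon$, using $\epsilon$-lc-ness together with the boundedness of $X'$ --- and then reconstruct $X$, up to flops, by extracting these finitely many divisors from a bounded model and running the MMP in reverse. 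I expect this reconstruction, that is, bounding the ``distance'' in the MMP between $X$ and its Mori fiber space model, to be the principal obstacle; it is also the place where the argument genuinely needs the full force of Conjecture~\ref{conj:SM}, since without bounded singularities of the base the induction cannot even be started.
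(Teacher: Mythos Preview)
Your overall strategy---induction on $d$, run a $K_X$-MMP to a Mori fiber space $X'\to Z$, control the base via the canonical bundle formula and Conjecture~\ref{conj:SM}, then bound $X'$ and pull back to $X$---matches the paper's. But there is a genuine gap, and you have also misidentified where the difficulty lies.

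\textbf{The gap: the case $B=0$.} Your argument that $K_X$ is not pseudo-effective hinges on the claim that $K_X\equiv 0$ (hence $B=0$) ``contradicts rational connectedness''. This is false for singular varieties: there exist rationally connected klt projective varieties $X$ with $K_X\sim_{\Qq}0$ (indeed these are exactly the objects of Conjectures~\ref{conj:1gap}--\ref{conj:bddrccy}). What is true is that such an $X$ must be \emph{non-canonical}: if $X$ had canonical singularities, a resolution $\widetilde X$ would satisfy $K_{\widetilde X}\ge f^*K_X\sim_{\Qq}0$, contradicting $H^0(\widetilde X,mK_{\widetilde X})=0$ for a smooth rationally connected variety. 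The paper exploits this: it extracts a single divisor $E$ with $a(E,X,0)<1$ to get a crepant model $(Y,B_Y)$ with $B_Y\neq 0$, reduces to the already-treated case $B\neq 0$ to bound $Y$ in codimension one, and then descends boundedness from $Y$ to $X$ via Lemma~\ref{lem:descendbdd2}. That last step---showing that contracting the single divisor $B_Y$ keeps you in a bounded-modulo-flops family when only $Y$ (not the pair $(Y,B_Y)$) is known to be bounded modulo flops---is the genuinely delicate part of the proof, requiring an extension-of-divisors argument (Lemma~\ref{lem:extdiv}) together with MMP in families.

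\textbf{The step you flagged as hardest is actually routine.} Passing from the bounded Mori fiber space $X'$ back to $X$ in codimension one is the content of Lemma~\ref{lem:bircontbdd} and needs no control on ``how many'' divisors were contracted. The point is that any prime divisor $E$ on $X$ exceptional over $X'$ satisfies $a(E,X',B')=a(E,X,B)\le 1$ since $(X,B)$ and $(X',B')$ are crepant Calabi--Yau pairs; hence all such $E$ can be extracted at once from (a bounded model of) $X'$ by \cite{BCHM10}, producing $X''\to X'$ of Fano type with $X''$ isomorphic to $X$ in codimension one, and then $X''$ is bounded by \cite[Theorem~2.2]{Bir18}. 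No reverse-MMP reconstruction is needed.

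Two smaller points. First, strengthening the inductive hypothesis to generalized pairs is unnecessary here: since $\bM_\pi$ is b-nef and b-abundant (Proposition~\ref{prop:cbfabundant}), one can replace $(Z,B_Z+\bM_\pi)$ by an honest $\frac{\delta}{2}$-lc pair $(Z,\Delta_Z)$ via Lemma~\ref{lem:gpairtopair} and Corollary~\ref{cor:basesing}, and feed that directly into Conjecture~\ref{conj:cybdd} in lower dimension. Second, your sketch for bounding $X'$ from boundedness of $Z$ and of the fibers is vague; the paper packages this as Lemma~\ref{lem:mfsbdd}, a mild extension of \cite[Theorem~2.2]{Bir18} to bases that are only bounded in codimension one.
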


In Conjecture \ref{conj:cybdd}, if we replace the condition $K_X+B\sim_\Rr 0$ with $-K_X-B$ being nef, it is also expected that $X$ is bounded (e.g. \cite[Conjecture 3.9]{MP04}).  This case could  be put in Calabi-Yau setting in the generalized sense. Then we naturally ask the following boundedness conjecture for generalized Calabi-Yau varieties.

\begin{conj}\label{conj:gcybdd}
Let $d$ be a positive integer and $\epsilon$ a positive real number. Then the set of rationally connected projective varieties $X$ such that $\dim X=d$, $(X,B+\bM)$ is an $\epsilon$-lc g-pair and $K_X+B+\bM_X\sim_\Rr 0$ for some $\Rr$-divisor $B$ and b-$\Rr$-divisor $\bM$, is bounded in codimension one.
\end{conj}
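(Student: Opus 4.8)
The plan is to derive Conjecture~\ref{conj:gcybdd} from Conjecture~\ref{conj:bddfiber} by induction on $d$, running in tandem with a generalized-pair strengthening of Conjecture~\ref{conj:SM}. The first step is to upgrade Conjecture~\ref{conj:bddfiber}, which concerns curve bases, to the following statement: for every $\epsilon$-lc generalized log Calabi--Yau fibration $(X,B+\bM)\to Z$ of Fano type over $Z$ with $\dim X=d$ and $Z$ of arbitrary dimension, the discriminant satisfies $B_Z\le 1-\delta$ for some $\delta=\delta(d,\epsilon)>0$. Indeed, the coefficient of $B_Z$ along a prime divisor $D\subseteq Z$ can be computed after restricting to a general complete intersection curve $C$ passing through a general point of $D$; the base change $X_C\to C$, after passing to a suitable crepant model, again satisfies the hypotheses of Conjecture~\ref{conj:bddfiber}, so the multiplicity of its fiber over $D\cap C$ is at most $m=m(d,\epsilon)$, which translates into the asserted bound on $B_Z$. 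From here on this generalized McKernan--Shokurov statement may be used freely.

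For the inductive step, let $(X,B+\bM)$ be as in Conjecture~\ref{conj:gcybdd}. Since $X$ is rationally connected it is uniruled, so $K_X$ is not pseudo-effective; running an appropriate MMP for a small perturbation of the generalized pair (which stays $\tfrac{\epsilon}{2}$-lc) terminates with a generalized Mori fiber space $\pi\colon X'\to Z$ with $\dim Z<d$, or with $\dim Z=0$. The divisors possibly contracted during this process lie in a bounded family of extractions because of the $\epsilon$-lc hypothesis, so for the desired codimension-one boundedness of $X$ it is enough to bound $X'$. If $\dim Z=0$, then $-K_{X'}$ is ample and $(X',B'+\bM)$ is $\epsilon$-lc, so $X'$ is bounded by boundedness of $\epsilon$-lc generalized Fano varieties (BAB and its generalized-pair refinements). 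If $\dim Z>0$, the general fiber $(F,B'_F+\bM_F)$ is an $\epsilon$-lc generalized log Fano pair, hence $F$ is bounded and rationally connected; the base $Z$ is rationally connected, and by the generalized canonical bundle formula together with the first step $(Z,B_Z+\bM_Z)$ is an $\epsilon'$-lc generalized log Calabi--Yau pair with $\epsilon'=\epsilon'(d,\epsilon)>0$, so by the inductive hypothesis $Z$ is bounded in codimension one, and we may take $Z$ itself in a bounded family. It then remains to bound $X'$ in terms of $Z$ and $F$: one shows that $\pi$ is log birationally bounded by combining the bounded base, the bounded general fiber, a bound on the degree of $\pi$ coming from effective adjunction, and a bound on the multiplicities of \emph{all} fibers over curves in $Z$ --- which is exactly the input imported from Conjecture~\ref{conj:bddfiber} --- and then upgrades log birational boundedness to boundedness in codimension one using the $\epsilon$-lc and $K_{X'}+B'+\bM_{X'}\sim_\Rr 0$ hypotheses (bounding the volume of $-K_{X'}$ plus a fixed bounded ample divisor and controlling the relevant Weil divisor class groups).

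The hardest part is this last upgrade: passing from ``base bounded, general fiber bounded, all fiber multiplicities bounded'' to genuine boundedness of the total space in codimension one. Controlling the full gluing data of the fibration $\pi$ --- not just its generic fiber and discriminant, but the birational geometry over the strata of $Z$ where the fibers degenerate --- and then converting the resulting log birational boundedness into boundedness in codimension one for the g-pair, all while tracking the moduli b-divisor $\bM$, is the technical core of the argument. A secondary subtlety appears already in the first step: the slicing reduction of the higher-dimensional discriminant bound to the curve case must be made compatible with the choice of crepant model of $X_C\to C$, so that the precise hypotheses of Conjecture~\ref{conj:bddfiber} really hold after the base change.
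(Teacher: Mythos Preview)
Your overall inductive architecture --- run an MMP to a Mori fiber space, control the base via the generalized canonical bundle formula together with the discriminant bound coming from Conjecture~\ref{conj:bddfiber}, and then lift boundedness from the base back to the total space --- matches the paper's proof of Proposition~\ref{1.2 to 1.4}. Two points, however, deserve correction.

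First, there is a genuine gap. Your claim that $K_X$ is not pseudo-effective because $X$ is rationally connected fails precisely when $B+\bM_X\equiv 0$, since then $K_X\equiv 0$; rationally connected klt varieties with $K_X\sim_\Qq 0$ do exist and are the subject of Conjectures~\ref{conj:rccyindex} and~\ref{conj:bddrccy}. In this case no $K_X$-MMP (or small perturbation thereof) yields a Mori fiber space, so your argument does not apply. The paper treats this case separately by reducing to Conjecture~\ref{conj:cybdd} for ordinary pairs; the proof of that (Proposition~\ref{1.2 to 1.3}) proceeds by extracting a divisor of log discrepancy $<1$ (such $X$ are necessarily non-canonical), obtaining a crepant pair $(Y,B_Y)$ with $B_Y\neq 0$ to which the $B\neq 0$ argument applies, and then invoking a descent lemma (Lemma~\ref{lem:descendbdd2}) to pass boundedness modulo flops from $Y$ back to $X$.

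Second, what you flag as the ``hardest part'' --- bounding the Mori fiber space $X'$ once $Z$ is bounded in codimension one --- is in fact a short citation in the paper: after flopping $Z$ to a genuinely bounded model and using \cite[Lemma~3.4]{DCS16} to produce a Mori fiber space over it, one applies \cite[Theorem~2.2]{Bir18} directly (this is packaged as Lemma~\ref{lem:mfsbdd}). No log birational boundedness, effective adjunction, or analysis of the gluing data over degenerate strata is needed; the $\epsilon$-lc condition on $X'$ together with the Mori fiber space structure over a bounded base already suffices. Your proposed detour through bounding all fiber multiplicities over curves in $Z$ and controlling Weil divisor class groups is unnecessary for this step.
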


\begin{rem}
For all above conjectures, we do not put any assumption on the coefficients of $B$ like DCC condition or the coefficients are bounded from below. All these conjectures are stated for pairs with $\Rr$ coefficients, however, up to a perturbation, in the rest of this paper, we always assume that $\epsilon$ and the coefficients of $B$ are rational.
\end{rem}

The purpose of this note is to reduce both Conjecture \ref{conj:SM}, Conjecture \ref{conj:cybdd} and Conjecture \ref{conj:gcybdd} to Conjecture \ref{conj:bddfiber}. In Proposition \ref{further reduction}, we will further reduce Conjecture \ref{conj:bddfiber} to a even `simpler' setting which is stated for Fano fibrations (see Conjecture \ref{conj:fano fibration}). Thus all conjectures stated in this note will be implied by Conjecture \ref{conj:bddfiber} or Conjecture \ref{conj:fano fibration}.

\begin{thm}{\rm{(=Proposition \ref{1.2 to 1.1})}}\label{thm1}
Conjecture \ref{conj:bddfiber} in dimension $d$ implies Conjecture \ref{conj:SM} in dimension $d$.
\end{thm}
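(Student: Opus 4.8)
The plan is to reduce Conjecture \ref{conj:SM} in dimension $d$ to Conjecture \ref{conj:bddfiber} in dimension $d$ by a standard base-change and localization argument. We may assume that $Z$ is positive-dimensional and, by passing to a small perturbation, that $\epsilon$ and the coefficients of $B$ are rational. The key point is that the discriminant $B_Z$ is computed by the formula $\mathrm{coeff}_D(B_Z) = 1 - \mathrm{lct}_{\eta_D}(X,B;\pi^*D)$ along each prime divisor $D\subset Z$ with generic point $\eta_D$, and one only needs to control this number for a fixed such $D$. So the strategy is: first cut $Z$ down to a curve through the generic point of $D$, thereby reducing to the case $\dim Z = 1$, and then invoke the bound on fiber multiplicities.

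First I would pass to the generic point of $D$: choose a general complete intersection curve $C_0\subset Z$ meeting $D$ transversally at a general point $z\in D$, and let $X_{C_0} = X\times_Z C_0$ with the restricted boundary $B_{C_0}$. By general hyperplane section arguments (Bertini), $(X_{C_0},B_{C_0})$ is again $\epsilon$-lc of the same dimension $\dim X_{C_0} = \dim X - (\dim Z - 1)$, which is $\le d$; the property of being of Fano type over the base is preserved under such base change, and $K_{X_{C_0}} + B_{C_0} \sim_{\Rr, C_0} 0$. Moreover, the coefficient of $B_Z$ along $D$ equals the coefficient of $B_{C_0,C_0}$ (the discriminant for the new fibration) at the point $z = C_0\cap D$. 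Replacing $C_0$ by its normalization and passing to a neighborhood, we reduce to the situation of Conjecture \ref{conj:bddfiber}: a contraction $\pi: X' \to C$ over a curve, with $(X',B')$ $\epsilon$-lc of dimension $\le d$, of Fano type over $C$, and $K_{X'}+B'\sim_{\Rr,C}0$.

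Next I would relate the discriminant coefficient at $z\in C$ to the fiber multiplicities. Write $\pi^* z = \sum_i m_i F_i$. One inequality here is elementary: pulling back a local equation of $z$ and comparing discrepancies over the $F_i$, any log resolution computing $\mathrm{lct}_z(X',B';\pi^*z)$ shows $1 - \mathrm{coeff}_z(B'_C) = \mathrm{lct}_z(X',B';\pi^*z) \ge \tfrac{a_i(X',B')}{m_i} \ge \tfrac{\epsilon}{m_i}$ for the $F_i$, hence $\mathrm{coeff}_z(B'_C) \le 1 - \epsilon/\max_i m_i$. By Conjecture \ref{conj:bddfiber} applied in dimension $d$ (note $\dim X' \le d$, and one can always trivially thicken to dimension exactly $d$ by crossing with $\Pp^1$ if the statement is only assumed in dimension $d$ on the nose — or simply note the conjecture for smaller dimension follows, as its hypotheses are inherited), there is an $m = m(d,\epsilon)$ with $m_i \le m$ for all $i$. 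Therefore $\mathrm{coeff}_z(B'_C) \le 1 - \epsilon/m =: 1 - \delta$, and tracing back through the base change, $\mathrm{coeff}_D(B_Z) \le 1-\delta$ with $\delta = \delta(d,\epsilon)$. Since $D$ was arbitrary, $B_Z \le 1-\delta$.

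The main obstacle is the first step: ensuring that the cut-down curve $C_0$ can be chosen so that the base change $(X_{C_0},B_{C_0})$ remains $\epsilon$-lc and that the discriminant coefficient along $D$ is genuinely preserved (not merely bounded) under restriction. This requires that $C_0$ avoid the non-generic behavior of the fibration over $Z$ — one must take $C_0$ through a general point of $D$ and very general in its linear system, and invoke the behaviour of the canonical bundle formula under base change to a general curve (the discriminant restricts correctly away from a closed subset of $D$, by a result on variation of log canonical thresholds in families / generic smoothness of the resolution). Handling the Fano-type hypothesis under base change — that $X_{C_0}$ is still of Fano type over $C_0$ — also needs a short argument, typically via shrinking $C_0$ to an affine neighborhood and using that Fano type is preserved by localization and dominant base change over the generic point, then spreading out.
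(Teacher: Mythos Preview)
Your reduction to the case where the base is a curve is fine and matches the paper's approach (which cites \cite[Lemma 3.2]{Bir16} for this step). The genuine gap is in the second step, where you claim
\[
\lct_z(X',B';\pi^*z)\ \ge\ \frac{a(F_i,X',B')}{m_i}\ \ge\ \frac{\epsilon}{m_i}.
\]
This inequality points the wrong way. For any component $F_i$ of $\pi^*z$ one has $\lct_z(X',B';\pi^*z)\le a(F_i,X',B')/m_i$, since imposing log canonicity along $F_i$ already forces $t\le a(F_i,X',B')/m_i$. More importantly, the lct is the infimum of $a(E,X',B')/\operatorname{ord}_E(\pi^*z)$ over \emph{all} divisors $E$ over $X'$, and the divisor achieving this infimum will typically be exceptional. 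For such $E$ one only knows $a(E,X',B')\ge\epsilon$, while $\operatorname{ord}_E(\pi^*z)$ is a priori unbounded; Conjecture~\ref{conj:bddfiber} as stated bounds the multiplicities $m_i$ of the components of the fiber \emph{on $X'$}, not the order of vanishing along arbitrary exceptional divisors. So bounding the $m_i$ does not, by itself, give a lower bound on the lct.

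The paper's proof closes this gap by first extracting a divisor that computes the threshold. One sets $t=\tfrac{\epsilon}{2}\text{-}\lct(X,B;\pi^*z)$, takes a log resolution on which some exceptional divisor $E$ has $a(E,X,B+t\pi^*z)=\tfrac{\epsilon}{2}$, and then runs an MMP over $X$ to contract the other exceptional divisors, producing a birational model $\phi':X'\to X$ and a boundary $\Delta'$ with $(X',\Delta')$ $\tfrac{\epsilon}{2}$-lc and $K_{X'}+\Delta'\sim_{\Qq,C}0$. Now $E$ is an honest component of $\pi'^*z$, with multiplicity $n_E=\operatorname{ord}_E(\pi^*z)$, and Conjecture~\ref{conj:bddfiber} applied to $(X',\Delta')$ bounds $n_E\le m(d,\epsilon)$. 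The crepancy relation $t\,n_E = a(E,X,B)-\tfrac{\epsilon}{2}\ge \tfrac{\epsilon}{2}$ then gives $t\ge \epsilon/(2m)$. In other words, the missing idea is that one must first promote the lct-computing divisor to a fiber component on a suitable birational model before invoking the multiplicity bound.
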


\begin{thm}{\rm{(=Proposition \ref{1.2 to 1.3})}}\label{thm2}
Conjecture \ref{conj:bddfiber} in dimension $d$ implies Conjecture \ref{conj:cybdd} in dimension $d$.
\end{thm}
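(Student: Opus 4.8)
The plan is to show that Conjecture~\ref{conj:bddfiber} in dimension $d$ forces any rationally connected $\epsilon$-lc Calabi--Yau pair $(X,B)$ of dimension $d$ into a bounded family (up to flops). The natural first move is to run a minimal model program to put $X$ into a convenient birational model: since $X$ is rationally connected, it is uniruled, so after an MMP on $K_X$ (which is a $K_X+B$-MMP twisted by $-B$, hence well-behaved because $(X,B)$ is $\epsilon$-lc and $K_X+B\sim_{\Rr}0$) we arrive at a Mori fiber space structure $X'\to Z$ with $\dim Z<d$. All the steps of this MMP are flops for $(X,B)$, so it suffices to bound $(X',B')$; replace $(X,B)$ by $(X',B')$ and assume $\pi\colon X\to Z$ is a Mori--Fano fibration with $-K_X$ relatively ample, $(X,B)$ still $\epsilon$-lc and $K_X+B\sim_{\Rr}0$.

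Next I would argue by induction on $d$. The base $Z$ is again rationally connected (as the image of a rationally connected variety) and of dimension $<d$. Applying the canonical bundle formula to $\pi\colon X\to Z$ produces a generalized pair $(Z,B_Z+\bM_Z)$ with $K_Z+B_Z+\bM_Z\sim_{\Rr}0$; the moduli part $\bM_Z$ is b-nef and, by the boundedness of the generic fiber (a Fano variety of bounded dimension with $\epsilon$-lc structure lies in a bounded family by Birkar's work), $\bM_Z$ is b-semiample with bounded Cartier index. The crucial input is that Conjecture~\ref{conj:bddfiber} in dimension $d$ — already known to imply Conjecture~\ref{conj:SM} in dimension $d$ by Theorem~\ref{thm1}, hence in all dimensions $\le d$ — gives a uniform $\delta=\delta(d,\epsilon)$ with $B_Z\le 1-\delta$ after passing to the curve-slices; so $(Z,B_Z+\bM_Z)$ is $\delta'$-lc for some $\delta'=\delta'(d,\epsilon)>0$ as a generalized pair. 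Now one wants to conclude boundedness of $Z$ together with $B_Z$ from the generalized analogue of Conjecture~\ref{conj:cybdd} in dimension $<d$; this is where the induction has to be set up carefully, because the object on $Z$ is a generalized pair, not an honest pair, so strictly speaking the induction should be run for the generalized statement (Conjecture~\ref{conj:gcybdd}) — one proves the generalized version in dimension $<d$ simultaneously, or cites it as known in lower dimension, and feeds boundedness of $(Z,B_Z+\bM_Z)$ back in.

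Finally, with $Z$ (and $B_Z+\bM_Z$, including a bounded model of $\bM_Z$) living in a bounded family, and the general fiber of $\pi$ in a bounded family of $\epsilon$-Fano varieties, one reconstructs boundedness of the total space $X$: the generic fiber being bounded log-controls $X$ over a dense open of $Z$, and the multiplicity bound from Conjecture~\ref{conj:bddfiber} controls the degeneration along the finitely many special fibers after pulling back to curves through general points of $Z$ — this is precisely the mechanism by which bounded multiplicities of fibers upgrade ``bounded general fiber'' to ``bounded total space'' (the same philosophy as in \cite[Theorem~1.4]{Bir16}). Assembling these pieces and an effective very-ampleness statement yields that $X$ ranges in a bounded family up to flops.

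The main obstacle is the passage from boundedness of the base of the Mori--Fano fibration to boundedness of the total space: controlling the finitely many multiple fibers uniformly is exactly what Conjecture~\ref{conj:bddfiber} is designed to do, but weaving it into the statement so that the bound is genuinely uniform in the moduli of $Z$ — rather than just fiberwise — requires the curve-reduction of Conjecture~\ref{conj:bddfiber} together with a boundedness-of-fibrations argument (in the spirit of bounded fibers plus bounded base plus bounded gluing data implies bounded total space). A secondary subtlety is the bookkeeping of the generalized-pair structure on $Z$ so that the induction closes; this is why Conjecture~\ref{conj:gcybdd} is introduced alongside Conjecture~\ref{conj:cybdd} in the paper.
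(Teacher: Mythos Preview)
Your outline has one genuine gap and two places where the paper's route is both cleaner and necessary.

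\textbf{The $B=0$ case is missing.} You start by running a $K_X$-MMP and asserting it ends in a Mori fiber space because $X$ is rationally connected. But Conjecture~\ref{conj:cybdd} allows $B=0$, and then $K_X\sim_\Rr 0$ is pseudo-effective, so no Mori fiber space appears. (Rational connectedness does not force $K_X$ to be non-pseudo-effective once $X$ is singular.) The paper treats this case separately: since a rationally connected klt variety with $K_X\equiv 0$ is non-canonical, one extracts a single divisor $E$ with $a(E,X,0)<1$ to obtain a crepant model $(Y,B_Y)$ with $B_Y\neq 0$, bounds $Y$ by the $B\neq 0$ argument, and then descends boundedness from $Y$ to $X$ via Lemma~\ref{lem:descendbdd2}. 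This last step is itself nontrivial because $(Y,B_Y)$ is only known to be bounded (not log bounded) modulo flops, so one cannot simply contract $B_Y$ in families without the extension-of-divisors argument there.

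\textbf{The induction on the base.} You propose to carry the generalized pair $(Z,B_Z+\bM_\pi)$ and induct on Conjecture~\ref{conj:gcybdd}; that forces you to prove a strictly stronger statement simultaneously. The paper avoids this: from Conjecture~\ref{conj:bddfiber} it first derives Proposition~\ref{prop:cbfsing} (the generalized pair on $Z$ is $\delta$-lc), and then, because the moduli part $\bM_\pi$ of an honest klt pair is b-nef and b-abundant (Proposition~\ref{prop:cbfabundant}), it applies Lemma~\ref{lem:gpairtopair} to replace $(Z,B_Z+\bM_\pi)$ by an honest $\frac{\delta}{2}$-lc pair $(Z,\Delta_Z)$ (Corollary~\ref{cor:basesing}). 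Now the induction is on Conjecture~\ref{conj:cybdd} itself, with no generalized pairs in sight.

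\textbf{Bounded base to bounded total space.} Your ``bounded fiber plus bounded base plus bounded gluing'' sketch is not how the paper proceeds, and making it uniform would be delicate. Once $Z$ is bounded modulo flops, the paper invokes Lemma~\ref{lem:mfsbdd} (a small variant of \cite[Theorem~2.2]{Bir18}) to conclude $X'$ is bounded in codimension one directly; then Lemma~\ref{lem:bircontbdd} pulls this back along the birational contraction $X\dashrightarrow X'$. Note also that your claim ``all steps of the $K_X$-MMP are flops for $(X,B)$'' is not literally what you need: the steps are $(K_X+B)$-crepant but may be divisorial, so $X\dashrightarrow X'$ need not be small, and Lemma~\ref{lem:bircontbdd} is required.
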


\begin{thm}{\rm{(=Proposition \ref{1.2 to 1.4})}}\label{thm3}
Conjecture \ref{conj:bddfiber} in dimension $d$ implies Conjecture \ref{conj:gcybdd} in dimension $d$.
\end{thm}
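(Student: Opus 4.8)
The strategy is to run the generalized-pair analogue of the argument for Theorem~\ref{thm2} (the reduction of Conjecture~\ref{conj:cybdd} to Conjecture~\ref{conj:bddfiber}), carrying the nef part $\bM$ through every step, and to induct on $d=\dim X$; note that Conjecture~\ref{conj:bddfiber} in dimension $d$ implies it in all dimensions $\le d$ (by taking products with a fixed $\epsilon_0$-lc log Calabi-Yau pair on $\Pp^1$), so Conjecture~\ref{conj:gcybdd} is available in dimension $<d$. By the Remark we may take $\epsilon$, the coefficients of $B$ and $\bM$ rational and, after a small $\Qq$-factorialization, $X$ to be $\Qq$-factorial. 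Since $X$ is rationally connected it is uniruled, so $K_X$ is not pseudo-effective and $K_X+(1-\delta)(B+\bM_X)\equiv\delta K_X$ is not pseudo-effective for $0<\delta\ll 1$; a generalized MMP on $K_X+(1-\delta)(B+\bM_X)$ with scaling then terminates with a Mori fiber space $f\colon X'\to Z$, $\dim Z<d$. Because $K_X+B+\bM_X\equiv 0$, every step of this MMP is crepant for $(X,B+\bM)$, so $(X',B'+\bM')$ is an $\epsilon$-lc generalized pair with $K_{X'}+B'+\bM'_{X'}\sim_\Rr 0$, $X'$ is rationally connected and of Fano type over $Z$, and it suffices to bound $X'$ in codimension one (the passage back to $X$, which extracts a bounded set of crepant divisors, is handled as in the proof of Theorem~\ref{thm2}).

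If $\dim Z=0$ then $-K_{X'}$ is ample, so $X'$ is an $\epsilon$-lc Fano variety and is bounded by Birkar's boundedness theorem for $\epsilon$-lc Fano varieties; so assume $1\le\dim Z\le d-1$. The generalized canonical bundle formula for $f$ produces a generalized pair $(Z,B_Z+\bM^Z)$ with $K_{X'}+B'+\bM'_{X'}\sim_\Rr f^{*}(K_Z+B_Z+\bM^Z_Z)$, hence $K_Z+B_Z+\bM^Z_Z\sim_\Rr 0$, with $Z$ rationally connected and $\dim Z<d$. The decisive point is that $(Z,B_Z+\bM^Z)$ is $\epsilon'$-lc with uniformly bounded data for some $\epsilon'=\epsilon'(d,\epsilon)>0$: this is the generalized form of Conjecture~\ref{conj:SM}, obtained from Conjecture~\ref{conj:bddfiber} as in the proof of Theorem~\ref{thm1} by localizing $Z$ at the generic point of each prime divisor in $\Supp B_Z$, passing to the associated discrete valuation ring to get a Fano-type fibration over a curve germ, and bounding its fiber multiplicities, hence the discriminant coefficients, the moduli part being controlled by its standard positivity. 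The induction hypothesis applied to $(Z,B_Z+\bM^Z)$ then shows that $Z$ is bounded in codimension one.

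It remains to bound $X'$ over the base $Z$, now bounded in codimension one. The general fiber $F$ of $f$ is an $\epsilon$-lc Fano variety, hence lies in a bounded family by Birkar's theorem, and $X'$ is of Fano type over $Z$ with $K_{X'}+B'+\bM'_{X'}\sim_{\Rr,Z}0$. To bound the vertical divisors, i.e.\ the components of fibers of $f$ over the codimension-one points $\eta\in Z$, localize at $\mathcal{O}_{Z,\eta}$: over this affine base $X'$ is of Fano type, so the restriction of $\bM'_{X'}$ is semi-ample and may be replaced by an $\Rr$-linearly equivalent boundary without altering the fibers or their multiplicities, reducing to the usual setting of Conjecture~\ref{conj:bddfiber} (equivalently, via Proposition~\ref{further reduction}, Conjecture~\ref{conj:fano fibration}) in dimension $\le d$, which bounds these multiplicities. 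With the base bounded in codimension one, the general fiber in a bounded family, and all vertical multiplicities bounded, the boundedness theory for Fano-type fibrations places $X'$, hence $X$, in a bounded family up to codimension one, closing the induction.

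The main difficulty lies in the bookkeeping forced by the nef part $\bM$: since $\bM_X$ need not be semi-ample or even effective, it cannot be absorbed into an honest boundary so as to deduce Theorem~\ref{thm3} directly from Theorem~\ref{thm2}, and hence the MMP, the canonical bundle formula and the Birkar-type boundedness statements must all be used in their generalized-pair forms. The technical crux is to establish the generalized version of Conjecture~\ref{conj:SM} with uniform control on the moduli b-divisor $\bM^Z$ on the base, so that the induction can be applied to $(Z,B_Z+\bM^Z)$, together with the verification that, after localizing to codimension-one points of the base, the generalized Fano-type fibration genuinely reduces to the situation governed by Conjecture~\ref{conj:bddfiber}.
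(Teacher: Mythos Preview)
Your argument has a genuine gap in the case $B+\bM_X\equiv 0$. You assert that since $X$ is rationally connected it is uniruled and hence $K_X$ is not pseudo-effective, but this implication (via BDPP) holds only for \emph{smooth} projective varieties. For a klt variety $X$ with $K_X\equiv 0$, the pull-back to a resolution $\tilde X$ differs from $K_{\tilde X}$ by an exceptional divisor whose coefficients may be negative (precisely when $X$ is non-canonical), so $K_{\tilde X}$ can fail to be pseudo-effective even though $K_X$ is numerically trivial. In fact rationally connected klt Calabi--Yau varieties with $K_X\sim_\Qq 0$ do exist; they are exactly what Conjecture~\ref{conj:bddrccy} concerns. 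In that situation your divisor $K_X+(1-\delta)(B+\bM_X)\equiv \delta K_X\equiv 0$ is nef, and the MMP you run does not terminate in a Mori fiber space, so the whole inductive mechanism collapses.

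The paper avoids this by splitting into two cases. When $B+\bM_X\not\equiv 0$ one has $K_X\equiv -(B+\bM_X)$ not pseudo-effective (since $B\ge 0$ and $\bM_X$ is a push-forward of a nef divisor, so $B+\bM_X$ is pseudo-effective and nonzero), and then the argument is essentially yours: run a $K_X$-MMP to a Mori fiber space $X'\to Z$, use Proposition~\ref{g-delta-lc} (the generalized form of Proposition~\ref{prop:cbfsing}) to make $(Z,B_Z+\bM_\pi)$ uniformly $\delta$-lc, induct on $\dim Z$, and then simply invoke Lemmas~\ref{lem:mfsbdd} and~\ref{lem:bircontbdd} to pass boundedness from $Z$ up to $X'$ and back to $X$---there is no need for your separate analysis of vertical fibers and general fibers, as this is already packaged in \cite[Theorem 2.2]{Bir18}. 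When $B+\bM_X\equiv 0$ one has $K_X\equiv 0$, and the paper reduces directly to Theorem~\ref{thm2}; the latter, in its own $B=0$ case, extracts a crepant non-canonical divisor to produce a genuine boundary and then appeals to Lemma~\ref{lem:descendbdd2}. That extra extraction step is exactly what is missing from your outline.
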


In Section 4, we also confirm Conjecture \ref{conj:gcybdd} for surfaces and threefolds. Moreover, we prove that those surfaces in Conjecture \ref{conj:gcybdd} are bounded. Remark that Conjecture \ref{conj:gcybdd} in dimension $3$ is proved by Birkar, Di Cerbo, and Svaldi \cite[Theorem 1.6]{BDCS20}.

\medskip
\noindent{\bf Acknowledgements.} 
This work began when G. Chen visited C. Zhou at EPFL in August of 2022. G. Chen would like to thank their hospitality. We would like to thank Jingjun Han and Jihao Liu for valuable discussions and suggestions. G. Chen is supported by the China post-doctoral grants BX2021269 and 2021M702925. C. Zhou is supported by grant European Research Council (ERC-804334).

\section{Preliminaries}
\subsection{Divisors}
Let $\F\in\{\Qq,\Rr\}$. Let $X$ be a normal variety and $\WDiv(X)$ the free abelian group of Weil divisors on $X$. Then an $\F$-divisor is defined to be an element of $\WDiv(X)_\F:=\WDiv(X)\otimes\F$.

A \emph{b-divisor} on $X$ is an element of the projective limit
$$\textbf{WDiv}(X)=\lim_{Y\to X}\WDiv (Y),$$
where the limit is taken over all the pushforward homomorphisms $f_*:\WDiv (Y)\to\WDiv(X)$ induced by proper birational morphisms $f:Y\to X$. In other words, a b-divisor $\textbf{D}$ on $X$ is a collection of Weil divisors $\textbf{D}_Y$ on higher models $Y$ of $X$ that are compatible under pushforward. The divisors $\textbf{D}_Y$ are called the \emph{traces} of $\textbf{D}$ on the birational models $Y.$ A b-$\F$-divisor is defined to be an element of $\textbf{WDiv}(X)\otimes\F$. 

The \emph{Cartier closure} of an $\F$-Cartier $\F$-divisor $D$ on $X$ is the b-$\F$-divisor $\overline{D}$ with trace $\overline{D}_Y=f^*D$ for any proper birational morphism $f:Y\to X$. A b-$\F$-divisor $\textbf{D}$ on $X$ is \emph{b-$\F$-Cartier} if $\textbf{D}=\overline{D_{Y}}$ where $\overline{D_Y}$ is an $\F$-Cartier $\F$-divisor on a birational model $Y$ of $X$, in this situation, we say $\textbf{D}$ \emph{descends} to $Y$. We say $\textbf{D}$ is \emph{b-nef} if it descends to a model $Y$ over $X$, such that $\textbf{D}_Y$ is nef. We say $\textbf{D}$ is \emph{b-abundant} if it descends to a model $Y$ over $X$ such that $\textbf{D}_Y$ is abundant, i.e., $\kappa(Y,\textbf{D}_Y)=\kappa_\sigma(Y,\textbf{D}_Y)$, see \cite[Chapters II,V]{Nak04} for the definitions.

\subsection{Singularities of g-pairs}
We adopt the standard notation and definitions in \cite{KM98}, and will freely use them.

Here, we discuss very briefly the analogous concepts for generalized pairs, while for further details we refer the readers to \cite{BZ16}.
\begin{defn}
A \emph{generalized sub-pair} (g-sub-pair for short) $(X,B+\bM)$ consists of an $\Rr$-divisor $B$ on $X$, and a nef b-$\Rr$-divisor $\bM$ on $X$, such that $K_X+B+\bM_X$ is $\Rr$-Cartier. If $B$ is effective, then we call $(X,B+\bM)$ a \emph{generalized pair} (g-pair for short). 

\smallskip

Let $(X,B+\bM) $ be a g-(sub-)pair and $f:W \to X$ a log resolution of $(X,\Supp B)$ to which $\bM$ descends. We may write 
$$K_{W}+B_W+\bM_W=f^*(K_X+B+\bM_X)$$
for some $\Rr$-divisor $ B_W$ on $W$. Let $E$ be a prime divisor on $W$. The \emph{log discrepancy} of $E$ with respect to $(X,B+\bM)$ is defined as 
$$a(E,X,B+\bM):=1-\mult_EB_W.$$
We say $(X,B+\bM)$ is \emph{$\epsilon$-lc} (resp., \emph{lc, klt}) for some non-negative real number $\epsilon$ if $a(E,X,B+\bM)\ge\epsilon$ (resp., $\ge0,\ >0$) for any prime divisor $E$ over $X$. 
\end{defn}

\begin{defn}
Let $(X,B+\bM)$ be a g-pair which is $a$-lc for some $a\ge0$. Assume that $D$ is an effective $\Rr$-divisor. The \emph{$a$-lc threshold} of $(X,B+\bM)$ with respect to $D$ is defined as
\begin{align*}
  \alct(X,B+\bM;D):=\sup\{t\ge0\mid&(X,(B+tD)+\bM)\text{ is $a$-lc}\}.
\end{align*}
\end{defn}

\begin{defn}   
We say $\pi: X \to Z$ is a \emph{contraction} if $X$ and $Z$ are normal quasi-projective varieties, $\pi$ is a projective morphism, and $\pi_*\Oo_X = \Oo_Z$ $(\pi$ is not necessarily birational$)$.
\end{defn}

\begin{defn}[{\cite[Definition 2.3]{PS09}}] 
Let $\pi: X \rightarrow Z$ be a contraction between normal varieties, $X$ is said to be of \emph{Fano type} over $Z$ if one of the following equivalent conditions holds:
\begin{enumerate}
  \item there exists a klt pair $(X, B)$ such that $-(K_{X}+B)$ is ample over $Z$;
  \item there exists a klt pair $(X, B')$ such that $K_{X}+B' \sim_{\Rr,Z} 0$ and $-K_{X}$ is big over $Z$.
\end{enumerate}
\end{defn}
%It is well-known that if $X$ is of Fano type over $ Z$, then one can run an MMP on any $\Rr$-Cartier $\Rr$-divisor on $X$ over $Z$, cf. \cite{BCHM10}. 

\begin{lem}\label{lem:gpairtopair}
Assume that $(X,B+\bM)$ is an $\epsilon$-lc g-pair for some positive real number $\epsilon$, such that $\bM$ is b-abundant. Then there exists a boundary $\Delta$ such that $(X,\Delta)$ is $\frac{\epsilon}{2}$-lc and $\Delta\sim_\Rr B+\bM_X$.
\end{lem}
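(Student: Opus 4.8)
The plan is to use the b-abundance hypothesis to produce an effective $\Rr$-divisor $\Rr$-linearly equivalent to $\bM_X$ that is mild enough in its singularities, and then split $\bM_X$ into two such pieces so that adding only "half" of the divisor to $B$ keeps the pair $\frac{\epsilon}{2}$-lc. More precisely, let $f: W\to X$ be a log resolution of $(X,\Supp B)$ to which $\bM$ descends, so that $\bM_W$ is nef and $K_W+B_W+\bM_W=f^*(K_X+B+\bM_X)$ with $\mult_E B_W\le 1-\epsilon$ for every prime divisor $E$ on $W$. Since $\bM$ is b-abundant, after possibly replacing $W$ by a higher model we may assume $\bM_W$ is nef and abundant. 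By the theory of abundant nef divisors (via the semiample fibration / Nakayama's $\kappa_\sigma=\kappa$ criterion, see \cite[Chapters II, V]{Nak04}), $\bM_W$ is $\Rr$-linearly equivalent to an effective $\Rr$-divisor; moreover, using the Iitaka fibration attached to $\bM_W$ and pulling back a general sufficiently-divisible member of the associated mobile/semiample class, one can choose effective $D_W\sim_\Rr \bM_W$ whose support meets $\Supp B_W$ and the exceptional locus transversally and has small coefficients.

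The key technical step is a bounding of coefficients: I would argue that there is a choice of $D_W\sim_\Rr \bM_W$, effective, such that $\mult_E D_W\le \frac{\epsilon}{2}$ along every prime divisor $E$ appearing in $B_W$, and such that $(W, D_W^{\mathrm{hor}})$ plus the relevant exceptional/boundary divisors is still "nice." The point is that an abundant nef class, though rigid in the vertical direction of its Iitaka fibration, is semiample after restricting to the fibration structure, so one genuinely has a positive-dimensional linear system of effective representatives in the mobile direction and can use Bertini-type general position to avoid codimension-one centers of $B$ with high multiplicity — one only needs the coefficient along divisors where $B_W$ already has coefficient close to $1-\epsilon$ to stay below $\frac{\epsilon}{2}$, and along any fixed part (vertical divisors of the Iitaka fibration) the coefficients can be scaled down by choosing the representative in $\frac{1}{2}\bM_W + \frac{1}{2}\bM_W$ and distributing. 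Concretely, write $\bM_X \sim_\Rr \frac{1}{2}\bM_X + \frac{1}{2}(\text{effective }G)$ where $G = f_*D_W$ with $\mathrm{mult}$ controlled; then set $\Delta := B + \frac{1}{2}G'$ for an appropriate effective $G'\sim_\Rr \bM_X$ obtained this way. Computing log discrepancies on $W$: for any prime $E$ over $X$, $a(E,X,\Delta) \ge a(E,X,B+\bM) - \frac{1}{2}\mult_E(\text{pullback of }G') \ge \epsilon - \frac{1}{2}\cdot\epsilon \cdot(\text{something}\le 1)$, which is arranged to be $\ge \frac{\epsilon}{2}$. The bookkeeping to make "$\mult_E$ of the pullback $\le \epsilon$" uniform over all $E$ — not just divisors on the fixed $W$ — is where care is needed, and is handled by noting that it suffices to check on one sufficiently high log resolution and then the inequality propagates.

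The main obstacle I anticipate is precisely this last uniformity: a single effective representative $D_W\sim_\Rr\bM_W$ controls $\mathrm{mult}_E$ only for $E$ on $W$ (or divisors of the Iitaka fibration pulled back), whereas $\frac{\epsilon}{2}$-lc is a condition over all divisorial valuations. One must exploit that $\bM_W$ is nef (so its pullback to any higher model has no negative part and a well-defined "moving" behaviour) and that the fixed part of $|\bM_W|_\Rr$ lives on $W$ itself, so adding a general member of the mobile part contributes $0$ to $\mathrm{mult}_E$ for $E$ not dominating a component of the base locus, and contributes a controlled amount otherwise. I would therefore structure the proof as: (i) descend and make $\bM_W$ abundant; (ii) run the Iitaka fibration for $\bM_W$ to write $\bM_W \sim_\Rr A_W + F_W$ with $A_W$ semiample (mobile) and $F_W\ge 0$ fixed; (iii) choose general $A_W' \sim_\Rr A_W$ effective in general position with $B_W \cup \Exc(f)$, whose coefficients along $\Supp B_W$ vanish; (iv) note $\mathrm{mult}_E F_W$ is a fixed finite collection of numbers, and after replacing $\bM$ by scaling considerations — actually, simply observe that $\frac{1}{2}F_W$ may still have large coefficient, so instead iterate the Iitaka fibration or use that $F_W$ is supported on finitely many divisors on $W$ and pass coefficients into $B_W$ only up to $\frac{\epsilon}{2}$ by choosing $\Delta = B + \frac{1}{2}(A_X' + F_X)$ and checking $\mathrm{mult}_E B_W + \frac{1}{2}\mathrm{mult}_E(\text{pullback}) \le 1 - \frac{\epsilon}{2}$ directly; (v) conclude. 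If the crude "halving" is insufficient when $\mathrm{mult}_E F_W$ is itself close to $1$, one replaces $\frac{1}{2}$ by a smaller rational constant $t$ and uses $\bM_X \sim_\Rr (1-t)\bM_X + t\cdot(\text{effective})$ iteratively, but since $\epsilon$ is fixed a single sufficiently small $t$ depending only on $\epsilon$ (and the now-fixed model $W$) suffices, and one absorbs the residual $(1-t)\bM_X$ by noting it is itself b-nef hence contributes $\ge 0$ log discrepancy, so the pair $(X,\Delta)$ with $\Delta = B + t G$ remains $\frac{\epsilon}{2}$-lc provided $t \le \frac{\epsilon}{2\,C}$ where $C$ bounds the multiplicities of a fixed effective $G\sim_\Rr \bM_X$ on $W$ — and the statement $\Delta \sim_\Rr B + \bM_X$ then fails unless we keep the full class, so in fact the correct normalization is to split $\bM_X \sim_\Rr t G_1 + t G_2 + \dots$ exhausting the class, which is only possible because $\bM$ abundant gives an honest $\Rr$-linear system; this is the delicate point and the reason b-abundance (not just b-nefness) is hypothesized.
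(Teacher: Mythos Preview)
Your approach is on the right track---descend $\bM$, decompose into semiample plus effective via the Iitaka-fibration structure, push down---but there is a genuine gap that causes all the confusion in your final paragraphs about halving, setting $\Delta=B+tG$, and ``exhausting the class''. The missing fact is that a nef abundant $\Rr$-divisor $D$ admits, for \emph{any} $\delta>0$, a decomposition $D\sim_\Rr E+A$ with $A$ semiample and $E\ge 0$ having all coefficients $<\delta$. Indeed, abundance gives (on a high enough model) a morphism $\phi\colon X'\to Y$ with $D\sim_\Rr\phi^*D_Y$ for some big and nef $D_Y$; then for small $t>0$ one has $D_Y\sim_\Rr(D_Y-tE_Y)+tE_Y$ with $D_Y-tE_Y$ ample and $tE_Y\ge 0$ effective of arbitrarily small coefficients, and one pulls back. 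The effective part is \emph{not} a rigid fixed locus with coefficients ``close to $1$'' as you fear; it scales down freely. This is precisely what b-abundance (as opposed to mere b-nefness) buys you, and once you see it there is no need for any of the $tG_1+tG_2+\cdots$ contortions.

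With this in hand the paper's proof is immediate and matches your outline stripped of the detours: on a model $f\colon X'\to X$ to which $\bM$ descends and is abundant (and which one may take to be a suitable log resolution), write $K_{X'}+B'+\bM_{X'}=f^*(K_X+B+\bM_X)$ with $B'\le 1-\epsilon$, choose $\bM_{X'}\sim_\Rr E'+A'$ with $E'\ge 0$ of coefficients $<\epsilon/4$ and $A'$ semiample, and replace $A'$ by a general effective member of small coefficients transverse to $\Supp(B'+E')$. Then $(X',B'+E'+A')$ is $\frac{\epsilon}{2}$-sub-lc, so setting $\Delta:=f_*(B'+E'+A')\sim_\Rr B+\bM_X$ and applying the negativity lemma gives $f^*(K_X+\Delta)=K_{X'}+B'+E'+A'$, whence $(X,\Delta)$ is $\frac{\epsilon}{2}$-lc. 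No halving, no iteration, and the full class of $\bM_X$ is captured in one step.
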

\begin{proof}
By assumption, there exist a higher model $f:X'\to X$ such that $\bM_{X'}$ is abundant. We may write 
$$K_{X'}+B'+\bM_{X'}=f^*(K_X+B+\bM_X)$$
for some $\Rr$-divisor $B'$. Since $\bM_{X'}$ is abundant, we can choose a decomposition $\bM_{X'}\sim_\Rr E'+A'$ such that
\begin{enumerate}
  \item $E'\ge0$ and $A'\ge0$ is semi-ample, and
  \item $(X,B'+E'+A')$ is $\frac{\epsilon}{2}$-sub-lc.
\end{enumerate}
Set $\Delta:=f_*(B'+E'+A')$, then $0\le\Delta\sim_\Rr B+\bM_X$. Moreover, by the negativity lemma, we see that
$$f^*(K_X+\Delta)=K_{X'}+B'+E'+A'.$$
It follows that $(X,\Delta)$ is $\frac{\epsilon}{2}$-lc.
\end{proof}

\subsection{Canonical bundle formulas}
Recall that for a sub-lc g-sub-pair $(X,B+\bM)$ and a contraction $\pi:X\to Z$ such that $(X,B+\bM)$ is a g-pair over the generic point of $Z$, $\dim Z>0$ and $K_X+B+\bM_X\sim_\Rr\pi^*L_Z$ for some $\Rr$-Cartier $\Rr$-divisor $L_Z$ on $Z$, %by \cite[Theorem 1.4]{Fil20},
there exists a g-sub-pair $(Z,B_Z+\bM_\pi)$ such that
$$K_X+B+\bM_X\sim_\Rr\pi^*(K_Z+B_Z+\bM_{\pi,Z}),$$
where $B_Z$ is defined as 
$$B_{Z}:=\sum_P (1-\lct(X/{Z}\ni \eta_P, B+\bM;\pi^*P))P.$$
We call $B_Z$ and $\bM_\pi$ the \emph{discriminant divisor} and \emph{moduli b-divisor} of \emph{the canonical bundle formula} for $(X,B+\bM)$ over $Z$ respectively, see \cite{Fil20} for more details. If $(X,B+\bM)$ is an lc (resp., klt) g-pair, then $(Z,B_Z+\bM_\pi)$ is lc (resp., klt). We may refer the readers to \cite{Bir19,Fil20,HanLiu20,JLX22} for the recent process towards the canonical bundle formulas.

The following result is well-known \cite{Amb05}.
\begin{prop}\label{prop:cbfabundant}
Notation as above. Assume that $\bM={\bf 0}$ and $(X,B)$ is klt. Then $\bM_\pi$ is b-nef and b-abundant.
\end{prop}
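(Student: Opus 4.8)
This is the klt\nobreakdash-trivial fibration case of Ambro's theorem on the moduli b\nobreakdash-divisor of a canonical bundle formula \cite{Amb05}, and the plan is to recall its proof in the present formulation. First I would reduce to a $\mathbb{Q}$\nobreakdash-boundary: since $(X,B)$ is klt and $K_X+B\sim_{\mathbb{R},Z}0$, perturbing the coefficients of $B$ inside the rational affine subspace carved out by this $\mathbb{R}$\nobreakdash-linear equivalence and inside the open locus on which klt\nobreakdash-ness is preserved, one writes $B$ as a finite convex combination $B=\sum_j t_j B^{(j)}$ with $t_j>0$, $\sum_j t_j=1$, each $B^{(j)}$ a $\mathbb{Q}$\nobreakdash-boundary and $K_X+B^{(j)}\sim_{\mathbb{Q},Z}0$. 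As the discriminant and moduli b\nobreakdash-divisors vary affine\nobreakdash-linearly along such a family (cf. \cite{Fil20}), $\bM_\pi=\sum_j t_j\bM_\pi^{(j)}$; a positive combination of b\nobreakdash-nef b\nobreakdash-divisors is b\nobreakdash-nef, and a positive combination of b\nobreakdash-nef, b\nobreakdash-abundant b\nobreakdash-divisors is again b\nobreakdash-abundant, so it suffices to treat each $B^{(j)}$. Thus we may assume there is a positive integer $m$ with $m(K_X+B)$ Cartier and $m(K_X+B)\sim\pi^*D$ for some Cartier divisor $D$ on $Z$.

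Next I would pass to a convenient model. By construction $\bM_\pi$ descends to some birational model $Z'\to Z$; after a further generically finite base change rendering the monodromy of the local system attached to $\pi$ unipotent, a log resolution, and a birational modification of the total space, one reaches a ``nice'' klt\nobreakdash-trivial fibration $\pi':X'\to Z'$ in Ambro's sense, for which $m\bM_{\pi,Z'}$ is $\mathbb{Q}$\nobreakdash-linearly equivalent to a Hodge\nobreakdash-theoretic divisor: a distinguished graded piece of the Hodge filtration of the canonical extension of the variation of Hodge structure carried by the degree\nobreakdash-$m$ cyclic cover of $X'$ associated to the relation $m(K_{X'}+B')\sim\pi'^*D'$, concretely the summand of $\pi'_*\omega_{X'/Z'}$ on which the cyclic Galois group acts by a fixed character. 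Then b\nobreakdash-nefness of $\bM_\pi$ follows from the Fujita--Kawamata semipositivity theorem: the Hodge bundle $\pi'_*\omega_{X'/Z'}$ of a family with semistable reduction in codimension one is nef, hence so is the summand computing $\bM_{\pi,Z'}$.

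It remains to establish b\nobreakdash-abundance, i.e. $\kappa_\sigma(Z',\bM_{\pi,Z'})=\kappa(Z',\bM_{\pi,Z'})$, and I expect this to be the main obstacle: b\nobreakdash-nefness is essentially immediate from classical semipositivity, whereas the Iitaka dimension of the Hodge\nobreakdash-theoretic moduli part has to be pinned down via Ambro's analysis of degenerating variations of Hodge structure. Following \cite{Amb05}, one restricts $\bM_{\pi,Z'}$ to general complete\nobreakdash-intersection curves $C\subset Z'$: over a curve the canonical extension of a polarized variation of Hodge structure with unipotent monodromy splits its Hodge pieces into a flat part of degree zero and a part of strictly positive degree, whence $\bM_{\pi,Z'}|_C$ is semiample and in particular abundant; combining this with the semipositivity already established and Nakayama's numerical\nobreakdash-dimension machinery \cite{Nak04}, one upgrades abundance from curves to all of $Z'$, completing the argument.
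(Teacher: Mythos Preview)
The paper does not actually prove this proposition; it simply records it as the main theorem of \cite{Amb05} and cites that reference. So there is no ``paper's own proof'' to compare against, only Ambro's.

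That said, your sketch of Ambro's argument has two genuine gaps worth flagging. First, in the reduction from $\mathbb{R}$- to $\mathbb{Q}$-coefficients you assert that ``a positive combination of b-nef, b-abundant b-divisors is again b-abundant''. This is not a standard fact and is not obviously true: for nef divisors $D_1,D_2$ the numerical dimension $\kappa_\sigma(D_1+D_2)$ can exceed $\max\{\kappa_\sigma(D_1),\kappa_\sigma(D_2)\}$, and there is no general mechanism forcing the Iitaka dimension to keep pace. You would need either to justify this claim or to bypass the reduction (in the present paper the authors simply assume rational coefficients throughout, cf.\ the remark following Conjecture~1.4, so the issue does not arise for their applications).

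Second, and more seriously, your outline of the abundance step is not how Ambro's proof goes. You propose to restrict to general complete-intersection curves, obtain semiampleness there, and then ``upgrade abundance from curves to all of $Z'$'' via Nakayama's machinery. There is no such upgrading principle: abundance on curve sections does not imply abundance of the divisor. Ambro's actual argument is of a different nature. He shows (after suitable alterations making the monodromy unipotent) that the variation of Hodge structure underlying the moduli part factors through a period map $\rho:Z'\to\overline{Z}$ to a quasi-projective variety, and that $\bM_{\pi,Z'}\sim_{\mathbb{Q}}\rho^*\overline{M}$ for a divisor $\overline{M}$ on $\overline{Z}$ which is nef and \emph{big}. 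Since nef and big implies abundant trivially, and pullback along a dominant morphism preserves the equality $\kappa=\kappa_\sigma$, abundance of $\bM_{\pi,Z'}$ follows. The core input is thus Deligne's semisimplicity and the construction of the period/moduli map, not a curve-to-global bootstrap.
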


\subsection{Boundedness}
A collection $\mathcal{P}$ of projective varieties is said to be \emph{bounded} (resp., \emph{bounded in codimension one}) if there exists a projective morphism $h:\cX\to S$ between schemes of finite type such that each $X\in\mathcal{P}$ is isomorphic (resp., isomorphic in codimension one) to $\mathcal X_s$ for some closed point $s\in S$. %Here by taking a normalization of X and applying Noetherian induction, we may assume that each fiber $X_s$ is normal. %For a family $\mathcal{P}$ which is bounded in codimension one, we also say $\mathcal{P}$ is \emph{bounded modulo flops}.

We say that a collection of projective log pairs $\mathcal{P}$ is \emph{log bounded} (resp., \emph{log bounded in codimension one}) if there is a quasi-projective scheme $\cX$, a reduced divisor $\cB$ on $\cX$, and a projective morphism $h: \cX \rightarrow S$, where $S$ is of finite type and $\cB$ does not contain any fiber, such that for every $(X, B) \in \mathcal{P}$, there is a closed point $s \in S$ and an isomorphism (resp., isomorphism in codimension one) $f: \cX_{s} \rightarrow X$  such that the support of  $\cB_{s}:=\cB|_{\cX_{s}}$ coincides with the support of $f_{*}^{-1} B$. Moreover, if $\mathcal{P}$ is a set of (log) Calabi-Yau pairs, we also say $\mathcal{P}$ is \emph{(log) bounded modulo flops} when $\mathcal{P}$ is (log) bounded in codimension one.

\begin{lem}\label{lem:bddrcbpfindex}
Assume that $\mathcal{P}$ is a bounded family of projective RC varieties. Then there exists a positive integer $n$ depending only on $\mathcal{P}$ such that for any $X\in\mathcal{P}$ and any Cartier divisor $D$ on $X$ with $D\equiv0$, we have that $nD\sim0$.
\end{lem}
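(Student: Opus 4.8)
The plan is to combine boundedness of the family with the fact that rational connectedness forces the Néron--Severi group to equal the Picard group (torsion-free part aside) and, more importantly, that numerical and linear equivalence differ only by a torsion subgroup whose size is uniformly bounded across the family. First I would recall that for a rationally connected smooth projective variety $X$ one has $H^1(X,\mathcal{O}_X)=0$, so $\Pic^0(X)=0$ and hence $\Pic(X)=\NS(X)$ is a finitely generated abelian group; a Cartier divisor $D$ with $D\equiv 0$ is therefore a torsion class in $\Pic(X)$, and its order divides the order $t(X)$ of the torsion subgroup $\NS(X)_{\mathrm{tors}}$. Thus it suffices to bound $t(X)$ uniformly over $X\in\mathcal{P}$. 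The varieties in $\mathcal{P}$ need not be smooth, but rational connectedness and the relevant cohomological vanishing pass to a resolution, and torsion in $\Pic$ does not grow under a resolution in the direction we need; alternatively one works throughout on a simultaneous resolution of the family, which exists after stratifying $S$.

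The next step is to upgrade ``bounded'' to something uniform. After Noetherian induction on $S$ and passing to a finite stratification, I may assume $h:\cX\to S$ is smooth (or at least that the fibers admit a simultaneous resolution $\tilde{\cX}\to S$ with RC fibers), that $S$ is smooth and irreducible, and that every $X\in\mathcal{P}$ is isomorphic in codimension one to some fiber $\cX_s$. Isomorphism in codimension one induces an isomorphism of divisor class groups and, since the fibers are $\mathbb{Q}$-factorial or can be taken so on the resolution, of Picard groups up to controlled torsion; in any case the torsion subgroup of $\NS$ of a fiber is identified with that of $X$. By properness and constructibility of $R^ih_*$ (or by the existence of relative Picard schemes $\mathbf{Pic}_{\cX/S}$), the torsion subgroup $\NS(\cX_s)_{\mathrm{tors}}$ has order bounded independently of $s$: the function $s\mapsto t(\cX_s)$ is constructible, hence takes finitely many values on the finite-type base $S$. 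Taking $n$ to be the least common multiple of these finitely many values (over all the finitely many strata) gives the desired integer, and then $nD\sim 0$ for every Cartier $D\equiv 0$ on every $X\in\mathcal{P}$.

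The main obstacle is the singular/codimension-one-isomorphism issue: one must be careful that passing from $X$ to a fiber $\cX_s$ only changes $\Pic$ in ways that do not affect the torsion order, and that the bound survives resolution. This is handled by the standard device of replacing $\mathcal{P}$ with a log resolution of the total space $\cX\to S$ fiberwise (using that RC and $H^1(\mathcal{O})=0$ are birational invariants of smooth projective varieties, so the resolved fibers are still RC with vanishing $H^1$), noting that for a birational morphism $f:\tilde X\to X$ of normal projective varieties the torsion of $\NS(\tilde X)$ surjects onto torsion of $\Cl(X)$ and in particular the order $t(X)$ divides $t(\tilde X)$, and then applying the constructibility argument on the resolved family. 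A secondary, more routine point is the reduction to smooth $S$ and the finiteness of strata, which is pure Noetherian induction. Once these are in place the conclusion is immediate.
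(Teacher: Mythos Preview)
Your approach is essentially the paper's: stratify the base, pass to a simultaneous fiberwise resolution $\cY_\alpha\to S_\alpha$ with smooth RC fibers, and bound the torsion of $\Pic(\cY_s)$ uniformly in $s$. The only real difference is in how the last step is justified: the paper uses Ehresmann's theorem over a contractible analytic neighborhood to see that $H^2(\cY_s,\Zz)$ (and hence $\Pic(\cY_s)\cong H^2(\cY_s,\Zz)$, since the fibers are smooth RC) is literally independent of $s$ on each stratum, whereas you invoke constructibility of $R^ih_*\Zz$ or of the relative Picard functor; these are two phrasings of the same fact.

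Two small corrections. First, the hypothesis is that $\mathcal P$ is \emph{bounded}, so each $X$ is isomorphic to a fiber $\cX_s$, not merely isomorphic in codimension one; you do not need, and should not claim, the weaker statement. Second, to descend the bound from the resolution $\tilde X$ back to $X$ you only need that $f^*:\Pic(X)\hookrightarrow\Pic(\tilde X)$ is injective (so $nf^*D\sim 0$ forces $nD\sim 0$); the assertion that torsion of $\NS(\tilde X)$ \emph{surjects} onto torsion of $\Cl(X)$ is neither needed nor obviously true, and ``$t(X)$ divides $t(\tilde X)$'' should be read as a statement about exponents, not orders.
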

\begin{proof}
By assumption, there exists a projective morphism $h:\cX\to S$ between schemes of finite type such that each $X\in\mathcal{P}$ is isomorphic to $X_s$ for some closed point $s\in S$. Then we can stratify $S$ into a union of finitely many constructible subsets $S=\cup_\alpha S_{\alpha}$, such that over each $S_{\alpha}$, $\cX \times_{S} S_{\alpha}$ admits a resolution $\mu_{\alpha}: \cY_{\alpha} \rightarrow \cX \times_{S} S_{\alpha}$, i.e., $(\cY_{\alpha}, E_{\alpha}:=\operatorname{Ex}(\mu_{\alpha}))$ is a simple normal crossing, and each stratum is log smooth over $S_{\alpha}$. In particular, $(\cY_{s}, E_{s}) \rightarrow X_{s}$ is a resolution for each $s \in S_{\alpha}$. %We only need to show that there exists a positive integer $n$ depending only on $\mathcal{P}$ such that for any RC variety $\cY_s$ and any Cartier divisor $D_s$ on $\cY_s$ which is numerically trivial, then it holds that $nD_s\sim0$. 

Fix an arbitrary $s \in S_\alpha$, and let $U_\alpha\subseteq S_\alpha$ be a contractible analytic neighborhood of $s$. Let $f_{U_\alpha}: \cY_{U_\alpha}:=f^{-1}(U_\alpha) \rightarrow U_\alpha$ be the restriction of $f_\alpha:\cY_\alpha\to S_\alpha$. Since $f_{U_\alpha}$ is topologically locally trivial and $U_\alpha$ is contractible, the restriction map $H^2(\cY_{U_\alpha},\Zz) \rightarrow H^2(\cY_{s}, \Zz)$ is an isomorphism. Hence we see that $H^2(\cY_s,\Zz)$ is independent of the choice of $s\in S$. Since $X$ is rationally connected, ${\rm Pic}(\cY_s)\cong H^2(\cY_s,\Zz)$. Moreover, as $H^2(\cY_s,\Zz)$ is a finitely generated $\Zz$-module, we can find a positive integer $n$ which only depends on $H^2(\cY_s,\Zz)$ such that if $D_s$ is a numerically trivial Cartier divisor on $\cY_s$, then $nD_s\sim0$. We can conclude that $n$ has the required property. This finishes the proof.
\end{proof}

The following lemma is well-known.

\begin{lem}\label{lem:bircontbdd}
Let $d$ be a positive integer and $\epsilon$ a positive real number. Assume that $(X,B+\bM)$ is a projective $\epsilon$-lc log Calabi-Yau g-pair of dimension $d$ and $\phi:X\dashrightarrow X'$ is a birational contraction. Suppose that $X'$ is bounded in codimension one. Then $X$ is bounded in codimension one.
\end{lem}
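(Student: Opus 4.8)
The plan is to show first that $\phi$ is \emph{crepant} for the given log Calabi--Yau structure, so that $X'$ inherits an $\epsilon$-lc log Calabi--Yau g-pair structure and $X$ is recovered from $X'$ by extracting finitely many divisors of log discrepancy at most $1$; and then to deduce that, in the given bounded-in-codimension-one family containing $X'$, these extractions stay bounded in codimension one. For the first part, fix a common log resolution $p\colon W\to X$, $q\colon W\to X'$ of $\phi$ to which $\bM$ descends, and put $B':=\phi_*B\ge 0$. Since $\phi$ is a birational contraction, every $p$-exceptional prime divisor on $W$ is $q$-exceptional, and $K_{X'}+B'+\bM_{X'}=\phi_*(K_X+B+\bM_X)\sim_\Rr 0$; in particular it is $\Rr$-Cartier, so $(X',B'+\bM)$ is a g-pair with $K_{X'}+B'+\bM_{X'}\sim_\Rr 0$. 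Writing $K_W+B_W+\bM_W=p^*(K_X+B+\bM_X)$ and $K_W+B'_W+\bM_W=q^*(K_{X'}+B'+\bM_{X'})$, both right-hand sides are $\sim_\Rr 0$, so $N:=B_W-B'_W\sim_\Rr 0$ and in particular $N$ is numerically trivial over $X'$. The non-$p$-exceptional components of $B_W$ are the strict transforms of the components of $B$ with the same coefficients, while its $p$-exceptional components are $q$-exceptional; hence $q_*B_W=\phi_*B=B'=q_*B'_W$, so $q_*N=0$. Applying the negativity lemma to $N$ and to $-N$ over $q$ gives $N=0$, i.e.\ $B_W=B'_W$. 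Therefore $a(E,X',B'+\bM)=a(E,X,B+\bM)$ for every prime divisor $E$ over $X$, so $(X',B'+\bM)$ is again an $\epsilon$-lc log Calabi--Yau g-pair of dimension $d$ and $\phi$ is crepant.

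Next I reduce to a bounded extraction problem. Let $E_1,\dots,E_k$ be the $\phi$-exceptional prime divisors on $X$. By the crepancy established above, each $E_i$ is a prime divisor over $X'$ with $\epsilon\le a(E_i,X',B'+\bM)=1-\operatorname{mult}_{E_i}B\le 1$. Since $(X',B'+\bM)$ is klt, one may extract $E_1,\dots,E_k$ from $X'$ to obtain a $\QQ$-factorial variety $Y$ together with a birational morphism $Y\to X'$ whose exceptional divisors are exactly $E_1,\dots,E_k$. Because $\phi$ is a birational contraction, the prime divisors on $X$ are precisely the strict transforms of the prime divisors on $X'$ together with $E_1,\dots,E_k$, which is exactly the set of prime divisors on $Y$; hence $X$ and $Y$ are isomorphic in codimension one, and it suffices to prove that $Y$ is bounded in codimension one.

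For the boundedness, since $X'$ is bounded in codimension one, after stratifying the parameter space as in the proof of Lemma~\ref{lem:bddrcbpfindex} we may assume $X'$ is isomorphic in codimension one to a fibre of a fixed family admitting a simultaneous log resolution. The divisors $E_1,\dots,E_k$ have log discrepancy at most $1$ with respect to the $\epsilon$-lc g-pair $(X',B'+\bM)$; since $\epsilon$ is fixed, their centres on the resolution are bounded and they are obtained by blow-up sequences of length bounded in terms of $d$ and $\epsilon$, so they trace out a bounded family of divisorial valuations. Consequently the extractions $Y\to X'$ form a bounded family, and $X$, being isomorphic in codimension one to such a $Y$, is bounded in codimension one.

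I expect the main obstacle to lie in the last step: controlling, uniformly over the bounded-in-codimension-one family of $X'$ and independently of the varying boundaries $B'$ and b-divisors $\bM$, the divisorial valuations of log discrepancy at most $1$ with respect to an $\epsilon$-lc log Calabi--Yau g-pair structure, and verifying that the associated divisorial extractions remain bounded. Granting that boundedness input (which is the content usually invoked when this lemma is quoted), the first two steps are formal consequences of the negativity lemma and the combinatorics of birational contractions.
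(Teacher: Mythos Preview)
Your first two paragraphs are correct and match the paper's argument. The crepancy of $\phi$ and the construction of the $\Qq$-factorial model $Y\to X'$ (which the paper calls $X''$) extracting exactly the $\phi$-exceptional divisors are done in the paper in the same way, only more tersely; the paper simply asserts that $(X',B'+\bM)$ is $\epsilon$-lc log Calabi--Yau and that $a(E,X',B'+\bM)=a(E,X,B+\bM)\le 1$ for the exceptional $E$, where you justify this via the negativity lemma.

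The gap is your third paragraph, and you are right to be uneasy about it. The claim that the $E_i$ ``are obtained by blow-up sequences of length bounded in terms of $d$ and $\epsilon$'' is not justified and is not how the argument goes: the condition $a(E_i,X',B'+\bM)\le 1$ involves the varying data $B',\bM$, on whose coefficients there is no lower bound, so there is no elementary bound on which divisorial valuations can occur. The global log Calabi--Yau constraint is what ultimately controls them, but extracting this requires real input, not a blow-up count.

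The paper supplies that input as follows. First, since $X'$ is only bounded in codimension one, one \emph{replaces} $X'$ by a genuinely bounded representative; you gesture at this with your stratification remark but never make the replacement. Second, one observes that $X''\to X'$ is of Fano type (here $X''$ is $\Qq$-factorial and $X'$ is klt). With $X'$ bounded, $(X'',B''+\bM)$ an $\epsilon$-lc log Calabi--Yau g-pair, and $X''\to X'$ of Fano type, the paper then invokes \cite[Theorem 2.2]{Bir18} (cf.\ \cite[Theorem 3.1]{HJ22}) to conclude that $X''$ belongs to a bounded family. That theorem is exactly the ``boundedness input'' you were looking for, and it is the non-formal content of the lemma.
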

\begin{proof}
Replacing $X'$, we may assume that $X'$ is bounded. Denote $B'$ to be the strict transform of $B$ on $X'$, then it is obvious that $(X',B'+\bM)$ is an $\epsilon$-lc log Calabi-Yau g-pair. For any prime divisor $E$ on $X$ which is exceptional over $X',$ $a(E, X', B'+\bM)=a(E, X, B+\bM) \leq 1$. So we can take a birational morphism $g: X'' \rightarrow X'$ extracting all prime divisors on $X$ which are exceptional over $X'$. In particular, $X''$ is $\mathbb{Q}$-factorial and isomorphic to $X$ in codimension 1 . We may write
$$K_{X''}+B''+\bM_{X''}=g^{*}(K_{X'}+B'+\bM_{X'}) .$$
Then $(X'', B''+\bM)$ is also an $\epsilon$-lc log Calabi-Yau g-pair and $X'' \rightarrow X'$ is of Fano type. Apply \cite[Theorem 2.2]{Bir18} (see also \cite[Theorem 3.1]{HJ22}) to $X'' \rightarrow X'$, such $X''$ belongs to a bounded family, and hence $X$ is bounded in codimension one.
\end{proof}

The following lemma is a small modification of \cite[Theorem 2.2]{Bir18}.

\begin{lem}\label{lem:mfsbdd}
Let $d$ be a positive integer and $\epsilon$ a positive real number. Assume that $(X,B+\bM)$ is a projective g-pair and $\pi:X\to Z$ is a contraction between $\Qq$-factorial varieties such that
\begin{itemize}
  \item $(X,B+\bM)$ is $\epsilon$-lc of dimension $d$,
  \item $\pi:X\to Z$ is a Mori fiber space, and
  \item $K_X+B+\bM_X\sim_{\Rr}0$.
\end{itemize}
Suppose that $Z$ is bounded in codimension one. Then $X$ is bounded in codimension one.
\end{lem}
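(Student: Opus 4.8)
The strategy is to mimic the proof of Lemma \ref{lem:bircontbdd}, but replacing the appeal to \cite[Theorem 2.2]{Bir18} for a birational contraction with the version for Mori fiber spaces. Since $\pi:X\to Z$ is a Mori fiber space, $\rho(X/Z)=1$ and $-K_X$ is $\pi$-ample, so $X$ is of Fano type over $Z$. We want to show $X$ lies in a family that is bounded in codimension one given that $Z$ is.

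First I would reduce the moduli part away by passing from the g-pair to an honest pair. The moduli b-divisor of the canonical bundle formula is b-nef and b-abundant in good situations; more directly, after running a suitable MMP over $Z$ one can arrange that $\bM$ descends, and then Lemma \ref{lem:gpairtopair} produces a boundary $\Delta$ with $(X,\Delta)$ being $\frac{\epsilon}{2}$-lc and $\Delta\sim_\Rr B+\bM_X$, hence $K_X+\Delta\sim_\Rr 0$. However, this destroys the Mori fiber space structure in general, so a cleaner route is to keep the g-pair and cite the generalized version of Birkar's theorem directly. I would state: since $Z$ is bounded in codimension one, replacing $Z$ we may assume $Z$ is bounded; then the fibers of $\pi$ are Fano type varieties with a log Calabi-Yau structure of bounded dimension and fixed $\epsilon$, and by the effective birationality / boundedness results for Fano type fibrations (the g-pair analogue of \cite[Theorem 2.2]{Bir18}, cf. \cite{Bir19, Bir21}), the total space $X$ belongs to a family bounded in codimension one over the bounded base $Z$.

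The key technical point, and the main obstacle, is to upgrade the relative boundedness statement for Fano fibrations from the $\Qq$-Gorenstein klt/pair setting to the generalized pair setting, and to make the boundedness relative over a base that is only bounded in codimension one rather than genuinely bounded. For the latter, I would first replace $Z$ by a small $\Qq$-factorial modification, which is bounded, and note that birational modifications of $Z$ in codimension one lift (after taking a resolution and running an MMP) to modifications of $X$ in codimension one that preserve the $\epsilon$-lc log Calabi-Yau g-pair structure, so it is harmless. For the former, one uses that a Mori fiber space over a bounded base with $\epsilon$-lc general fibers admitting a bounded complement has bounded anti-log-canonical volume, hence the general fibers are log bounded; combined with boundedness of the base and a Hilbert scheme argument (bounding the relevant embedding data), one gets that $X$ lies in a bounded family in codimension one. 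I expect the delicate bookkeeping with the boundary $B$ and the b-divisor $\bM$ — ensuring the $\epsilon$-lc condition and the numerical triviality pass through the MMP and the modifications — to be where most of the care is needed.
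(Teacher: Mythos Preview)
Your overall strategy---replace $Z$ by a genuinely bounded $Z'$ isomorphic to it in codimension one, lift the Mori fiber space structure to some $X'\to Z'$ with $X'$ isomorphic to $X$ in codimension one, then apply Birkar's boundedness theorem over the bounded base---is exactly what the paper does. But you manufacture an obstacle that is not there.

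The ``main obstacle'' you identify, namely upgrading \cite[Theorem 2.2]{Bir18} to the g-pair setting, is unnecessary. That theorem only requires that $X'$ be $\epsilon$-lc as a variety and of Fano type over a base belonging to a bounded family; it does not care about any boundary or nef part on $X'$. Since $(X',B'+\bM)$ is $\epsilon$-lc (being isomorphic in codimension one to $(X,B+\bM)$), the underlying variety $X'$ is automatically $\epsilon$-lc, and since $X'\to Z'$ is a Mori fiber space it is of Fano type over $Z'$. So \cite[Theorem 2.2]{Bir18} applies verbatim, with no need to convert the g-pair into a pair via Lemma~\ref{lem:gpairtopair}, and no need for a generalized-pair analogue. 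Your first paragraph of detours (descending $\bM$, running an MMP, invoking Lemma~\ref{lem:gpairtopair}) can simply be deleted.

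For the lifting step you describe informally (``modifications of $Z$ in codimension one lift \ldots\ to modifications of $X$ in codimension one''), the paper cites \cite[Lemma 3.4]{DCS16} as a black box: given a small birational map $Z\dashrightarrow Z'$ and a Mori fiber space $X\to Z$, one obtains a Mori fiber space $X'\to Z'$ with $X\dashrightarrow X'$ small. That is the precise statement you want, and it replaces your sketch of ``taking a resolution and running an MMP''. With these two citations in hand the proof is four lines; the Hilbert-scheme and bounded-complement digression in your last paragraph is not needed.
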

\begin{proof}
By assumption, $B+\bM_X\not\equiv0$. As $Z$ is bounded in codimension one, there exists a projective normal variety $Z'$ that is bounded, such that $Z'$ is isomorphic to $Z$ in codimension one. By the same arguments as in \cite[Lemma 3.4]{DCS16}, there exists a projective normal variety $X'$ such that
\begin{itemize}
  \item $\phi: X \dashrightarrow X'$ is isomorphic in codimension one, and
  \item $X' \rightarrow Z'$ is a Mori fiber space.
\end{itemize}
Let $B'$ be the strict transform of $B$ on $X'$, it is clear that $(X',B'+\bM)$ is $\epsilon$-lc and $K_{X'}+B'+\bM_{X'}\sim_\Rr0$. Since $Z'$ is bounded, $X'$ is bounded according to \cite[Theorem 2.2]{Bir18}. Thus $X$ is bounded in codimension one. This completes the proof.
\end{proof}

The following lemma is a small modification of \cite[Theorem 1.4]{Bir18}.

\begin{lem}\label{lem:birkarbddmodflop}
Let $d$ be a positive integer, and $\epsilon, \delta$ two positive real numbers. Let $\mathcal{P}$ be a set of $\Qq$-factorial projective varieties. Consider the set $\mathcal{P}'$ consisting of $\Qq$-factorial projective pairs $(X, B)$ such that
\begin{enumerate}
  \item $(X, B)$ is $\epsilon$-lc log Calabi-Yau of dimension $d$,
  \item the non-zero coefficients of $B$ are $\geq \delta$,
  \item there is a contraction $\pi:X\to Z$ with $Z\in\mathcal{P}$, and
  \item $X \rightarrow Z$ factors as a tower of Fano fibrations such that $X_i\to X_{i+1}$ is a Mori fiber space for each $0\leq i\leq l-1$.
  %\item $K_{X}+B \sim_{\Rr,Z} 0$, and $(Z,B_Z)$ is log Calabi-Yau for some boundary $B_Z$.
\end{enumerate}
If $\mathcal{P}$ is bounded modulo flops, then $\mathcal{P}'$ is log bounded modulo flops.
\end{lem}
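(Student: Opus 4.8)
The plan is to perform induction on the length $l$ of the tower of Fano fibrations, reducing the statement to the single-step case covered by Lemma \ref{lem:mfsbdd}. The base case $l=0$ is trivial since then $X$ is isomorphic in codimension one to $Z\in\mathcal{P}$, which is bounded modulo flops, and there is essentially nothing to prove for the log structure beyond noting that the non-zero coefficients of $B$ are bounded below by $\delta$ and lie in $[\delta,1]$, so they form a DCC (in fact finite, after the rationality reduction) set; together with the $\epsilon$-lc condition and $\dim\le d$ this pins down finitely many possibilities for $B$ on a bounded base. For the inductive step, I would like to say: run the tower $X=X_0\to X_1\to\cdots\to X_l=Z$; by the inductive hypothesis applied to $X_1\to Z$ (which is a shorter tower of Fano fibrations over $Z\in\mathcal{P}$, with the pushforward boundary $B_1$ still $\epsilon$-lc log Calabi-Yau with coefficients $\ge\delta$), the variety $X_1$ is bounded modulo flops. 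Then $X_0\to X_1$ is a single Mori fiber space, $(X_0,B)$ is $\epsilon$-lc log Calabi-Yau of dimension $d$, and $K_{X_0}+B\sim_\Rr 0$, so Lemma \ref{lem:mfsbdd} (with $\bM=\mathbf 0$) gives that $X_0$ is bounded in codimension one.

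There are two things to be careful about. First, Lemma \ref{lem:mfsbdd} is stated for a variety $Z$ that is bounded in codimension one, whereas the inductive hypothesis gives me that $X_1$ is bounded modulo flops, which is the same notion; so this matches. Second — and this is the main obstacle — I need the \emph{log} boundedness, i.e., I need to control $B$ itself, not just $X$. The point is that once $X$ (equivalently, a small $\Qq$-factorial modification of it, or a bounded representative in its birational class) varies in a bounded family $h\colon\cX\to S$, the divisor $B$ has coefficients in the finite set determined by $\epsilon,\delta,d$ (after the global rationality assumption in the Remark) and bounded degree against a relatively ample polarization on $\cX/S$, so by Noetherian induction / the theory of relative Chow varieties the pairs $(X,B)$ with $\Supp B$ matching a divisor on the family form finitely many bounded families; stratifying $S$ and taking $\cB$ to be the (reduced) universal divisor over each stratum yields the required log bounded modulo flops family. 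This last packaging step is standard (cf. the arguments behind \cite[Theorem 1.4]{Bir18}), but it is where the real content lies, since the birational contraction produced by Lemma \ref{lem:mfsbdd} is only an isomorphism in codimension one and one must track $B$ through it.

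Concretely, here is the order I would carry out the steps. \textbf{Step 1:} Reduce to rational coefficients and fix the finite coefficient set $\fR=\fR(\epsilon,\delta,d)$. \textbf{Step 2:} Set up the induction on $l$; handle $l=0$ directly. \textbf{Step 3:} For the inductive step, factor $X\to Z$ through $X_1$, verify that $(X_1,B_1)$ with $B_1$ the discriminant-free pushforward (here since $K_X+B\sim_\Rr 0$ over $X_1$ and the fibration is a Mori fiber space, one uses that $(X_1, B_1:=\pi_*B)$ remains $\epsilon$-lc log Calabi-Yau — this is where one needs $B+\bM_X\not\equiv 0$ fiberwise, automatic for a genuine Mori fiber space since $-K_X$ is relatively ample so $B$ is relatively nonzero) and has coefficients $\ge\delta$, and that $X_1\to Z$ is a tower of length $l-1$; apply the inductive hypothesis to conclude $X_1$ is bounded modulo flops. \textbf{Step 4:} Apply Lemma \ref{lem:mfsbdd} to $X\to X_1$ to get $X$ bounded in codimension one. \textbf{Step 5:} Upgrade ``$X$ bounded modulo flops'' to ``$(X,B)$ log bounded modulo flops'' by the Noetherian stratification argument above, using that $B$ has coefficients in $\fR$ and bounded degree. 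The hard part is Step 5 together with checking in Step 3 that passing to $X_1$ genuinely preserves all the hypotheses of the statement (in particular $\Qq$-factoriality and the $\epsilon$-lc log Calabi-Yau property of the pushforward pair); the rest is a clean induction feeding into Lemma \ref{lem:mfsbdd}.
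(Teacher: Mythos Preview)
Your inductive Step 3 has a genuine gap. For the Mori fiber space $\phi_0\colon X_0\to X_1$ the relative dimension is positive, so $\pi_*B$ as a divisor on $X_1$ is just $0$; there is no ``discriminant-free pushforward'' that produces a pair $(X_1,B_1)$ with the same coefficient set. If instead you mean the discriminant $B_{X_1}$ from the canonical bundle formula for $(X_0,B)$ over $X_1$, then two of your hypotheses collapse: first, there is no reason the coefficients of $B_{X_1}$ lie in $[\delta,1]$ (they are $1-\lct$'s, unrelated to $\delta$); second, and more seriously, asserting that $(X_1,B_{X_1}+\bM_{\phi_0})$ is $\epsilon$-lc (or $\epsilon'$-lc for any fixed $\epsilon'$) is exactly the content of the M\textsuperscript{c}Kernan--Shokurov conjecture (Conjecture \ref{conj:SM}), which this lemma does \emph{not} assume. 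So the inductive hypothesis cannot be applied to $(X_1,B_1)$, and hence you cannot conclude that $X_1$ is bounded modulo flops this way. The same obstruction blocks iterating Lemma \ref{lem:mfsbdd} down the tower step by step.

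The paper sidesteps this entirely: rather than inducting on $l$, it first replaces $Z$ by a genuinely bounded $Z'$ (small birational to $Z$), and then uses \cite[Lemma 3.4]{DCS16} to transport the \emph{whole} tower at once to a tower of Mori fiber spaces $X_0'\to\cdots\to X_l'=Z'$ with each $X_i\dashrightarrow X_i'$ small. Now $(X_0',B_0')$ is $\epsilon$-lc log Calabi--Yau with coefficients $\ge\delta$ sitting over a \emph{bounded} base via a tower of Mori fiber spaces, and \cite[Theorem 1.4]{Bir18} applies directly to give log boundedness of $(X_0',B_0')$; pushing back gives log boundedness modulo flops for $(X,B)$. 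The point is that \cite[Theorem 1.4]{Bir18} handles towers of arbitrary length over a bounded base in one shot, without ever needing control of singularities on the intermediate $X_i$.
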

\begin{proof}
By assumption, there exists a tower of Fano fibrations
$$X=:X_{0} \stackrel{\phi_0}{\longrightarrow} X_{1} \stackrel{\phi_1}{\longrightarrow} X_{2} \stackrel{\phi_2}{\longrightarrow} \cdots \stackrel{}{\longrightarrow} X_{l}:=Z,$$
i.e., the morphism $\phi_i:X_{i} \rightarrow X_{i+1}$ is a Mori fiber space for any $0 \leq i<l$. As $(X,B)$ is $\epsilon$-lc log Calabi-Yau, there exists a boundary $B_i$ on $X_i$ such that $(X_i,B_i)$ is klt log Calabi-Yau, and $K_{X_{i-1}}+B_{i-1}\sim_\Rr\phi_{i-1}^*(K_{X_i}+B_i)$ for each $1\le i\le l$, see \cite{Amb05}. Note here that $B_i\neq0$ for any $1\le i\le l-1$, as $\phi_i$ are Fano fibrations. As $Z\in\mathcal{P}$ is bounded modulo flops, there exist a projective normal variety $Z'$ which is bounded and a small birational contraction $Z\dashrightarrow Z'$. Moreover, by \cite[Lemma 3.4]{DCS16}, there exist projective normal varieties $X_0',\dots,X_l':=Z'$ such that for any $0\le i\le l-1$, it holds that
\begin{itemize}
  \item $X_i \dashrightarrow X_i'$ is a small birational contraction, and
  \item $X_i' \rightarrow X_{i+1}'$ is a Mori fiber space.
\end{itemize}
\begin{center}
  \begin{tikzcd}
      X_0 \arrow[d, "" swap,dashed]\arrow[rr]  && X_1 \arrow[d, "" swap,dashed]\arrow[rr] && X_2 \arrow[d, "" swap,dashed]\arrow[rr]  && \cdots\arrow[rr] && X_l:=Z\arrow[d, "" swap,dashed] \\
      X_0' \arrow[rr]  && X_1' \arrow[rr] && X_2' \arrow[rr]  && \cdots\arrow[rr] && X_l':=Z' 
  \end{tikzcd}
\end{center}
Denote by $B_0'$ the push-down of $B$ on $X_0'$. It is clear that $(X_0',B_0')$ is also $\epsilon$-lc log Calabi-Yau and $B_0'\ge\delta$. According to \cite[Theorem 1.4]{Bir18}, $(X_0',B_0')$ is log bounded. Therefore $X$ is bounded modulo flops and thus $(X,B)$ is log bounded modulo flops. The proof is finished.
\end{proof}

\section{From multiplicities to singularities}\label{sec3}

\subsection{Proof of Theorem \ref{thm1}}
\begin{prop}\label{1.2 to 1.1}
Conjecture \ref{conj:bddfiber} in dimension d implies  Conjecture \ref{conj:SM} in dimension d.
\end{prop}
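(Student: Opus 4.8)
The plan is to reduce the bound on the discriminant coefficient $B_Z$ to a bound on fiber multiplicities via a careful local analysis at a divisorial valuation over $Z$. Suppose, for contradiction, that Conjecture \ref{conj:SM} fails for some fixed $d$ and $\epsilon$. Then there is a sequence of $\epsilon$-lc pairs $(X_n, B_n)$ with contractions $\pi_n: X_n \to Z_n$ as in the statement, $X_n$ of Fano type over $Z_n$, $K_{X_n}+B_n \sim_{\Rr, Z_n} 0$, and a component of $(B_{Z_n})$ whose coefficient tends to $1$. The discriminant coefficient at a prime divisor $P \subset Z_n$ is $1 - \lct(X_n/Z_n \ni \eta_P, B_n; \pi_n^*P)$, so the assumption says these log canonical thresholds over generic points of codimension-one points tend to $0$.

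**First I would** perform the standard reduction to fibrations over a curve. Given the problematic component $P_n \subset Z_n$, cut $Z_n$ by general hyperplanes through a general point of $P_n$ to obtain a curve $C_n \to Z_n$, and base change (taking the normalization of the main component, together with an appropriate crepant boundary) to get a fibration $(X_n', B_n') \to C_n$ that is still $\epsilon$-lc, still of Fano type over $C_n$, with $K_{X_n'} + B_n' \sim_{\Rr, C_n} 0$, and such that the discriminant coefficient of $C_n$ at the point $z_n$ lying over $\eta_{P_n}$ equals the original coefficient (this uses that lct and the canonical bundle formula behave well under such general base change; adjunction/inversion of adjunction plus Bertini). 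Now the dimension is still at most $d$, so Conjecture \ref{conj:bddfiber} applies: the multiplicities $m_i$ in $\pi_n'^*z_n = \sum_i m_i F_i$ are bounded by a fixed integer $m = m(d,\epsilon)$.

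**The key step** is then to derive, from a bound on all $m_i$ together with the $\epsilon$-lc hypothesis, a bound $1 - \delta$ on the discriminant coefficient $1 - \lct(X_n'/C_n \ni z_n, B_n'; \pi_n'^*z_n)$. For this I would take a log resolution and compute: if $E$ is a divisor computing this lct with center over $z_n$, its log discrepancy with respect to $(X_n', B_n' + t\,\pi_n'^*z_n)$ at $t = \lct$ is zero. Because $\pi_n'^*z_n = \sum m_i F_i$ has all coefficients $\le m$, and because $(X_n', B_n')$ is $\epsilon$-lc so that $a(E, X_n', B_n') \ge \epsilon$, a direct discrepancy estimate along the blow-up that extracts $E$ shows $t = \lct \ge \epsilon / m$ (roughly: each unit of multiplicity can consume at most one unit of log discrepancy, and we start with a surplus of at least $\epsilon$; more precisely one bounds the coefficient of $E$ in $\pi_n'^* z_n$ — namely $\mult_E \pi_n'^* z_n$ — in terms of $m$ and the log discrepancy of $E$ over the generic point of $z_n$). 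Setting $\delta := \epsilon/m$ gives $B_{Z_n} \le 1 - \delta$ at $P_n$, contradicting the assumption; since $P_n$ was an arbitrary component, we are done.

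**The main obstacle** I expect is making the base-change-to-a-curve step fully rigorous while preserving all hypotheses simultaneously: one must arrange that after cutting down and normalizing, the resulting space is still of Fano type over the curve (not merely a weak Fano or log Calabi-Yau fibration), that the new boundary is still effective and $\epsilon$-lc, and — crucially — that the discriminant coefficient at the relevant point is exactly preserved (or not decreased) under the base change, which requires knowing the behavior of the canonical bundle formula and of $\lct$ along general complete intersection slices. The discrepancy computation in the last step is more routine but needs care about whether $E$ is exceptional or a component $F_i$ itself, and about the relation between $\mult_E \pi_n'^*z_n$ and the $m_i$; I would handle this by working on a common log resolution of $(X_n', B_n' + \sum F_i)$ and tracking coefficients divisor by divisor.
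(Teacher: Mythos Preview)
Your reduction to a curve base is correct and the paper does the same, citing \cite[Lemma 3.2]{Bir16}. The gap is in your key step. You apply Conjecture~\ref{conj:bddfiber} to $X$ itself, bounding the multiplicities $m_i$ of the components $F_i$ of $\pi^*z$ \emph{on $X$}, and then assert that a ``direct discrepancy estimate'' yields $\lct(X,B;\pi^*z)\ge\epsilon/m$. But the divisor $E$ computing the lct may well be exceptional over $X$, and Conjecture~\ref{conj:bddfiber} says nothing about $\mult_E(\pi^*z)$ for such $E$; there is no a~priori bound on this quantity in terms of the $m_i$ alone, and your parenthetical suggestion of bounding it via ``the log discrepancy of $E$ over the generic point of $z_n$'' is not a valid inequality. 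Nor can you simply extract $E$ and re-apply Conjecture~\ref{conj:bddfiber} on the new model: if $a(E,X,B)>1$ the crepant boundary for $(X,B)$ acquires a negative coefficient along $E$, while if you instead use the crepant boundary for $(X,B+t\pi^*z)$ at $t=\lct$ then $E$ appears with coefficient $1$, so the extracted pair is only lc and not $\epsilon'$-lc for any fixed $\epsilon'>0$.

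The paper supplies exactly this missing construction. It replaces the lct by the $\frac{\epsilon}{2}$-lc threshold $t:=\frac{\epsilon}{2}\text{-}\lct(X,B;\pi^*z)$, passes to a log resolution, and runs an MMP over $X$ to produce a model $\phi':X'\to X$ extracting precisely the exceptional divisors $E_i$ with $a(E_i,X,B+t\pi^*z)=\frac{\epsilon}{2}$, together with an effective boundary $\Delta'$ crepant to $(X,B+(1-\mu)t\pi^*z)$ for small $\mu>0$. Then $(X',\Delta')$ is $\frac{\epsilon}{2}$-lc with $K_{X'}+\Delta'\sim_{\Rr,C}0$, and since the extraction takes place only over the closed point $z$ the Fano-type condition over $C$ persists. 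Conjecture~\ref{conj:bddfiber} applied to $(X',\Delta')$ now bounds $n_i:=\mult_{E_i}(\pi^*z)\le m$, and the crepant identity $tn_i-a_i=1-\frac{\epsilon}{2}$ (with $a_i\ge\epsilon-1$) gives $t\ge\frac{\epsilon}{2m}$, hence $\lct(X,B;\pi^*z)>t\ge\frac{\epsilon}{2m}$. The point you were missing is that Conjecture~\ref{conj:bddfiber} must be invoked on a carefully built higher model where the threshold-computing divisor has become an actual fiber component, with a boundary engineered to keep a uniform positive lc margin.
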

\begin{proof}
To prove Conjecture \ref{conj:SM},  we first assume that $Z$ is a curve. 
Then it is enough to show that 
$$\lct(X,B;\pi^*z)\ge\delta$$ 
for some positive real number $\delta$ which only depends on $d$ and $\epsilon$. 
Put 
$$t:=\frac{\epsilon}{2}\text{-}\lct(X,B;\pi^*z).$$ 
Take $\phi:W\to X$ a log resulotion of $(X,B+\pi^*z)$ such that we can write
$$K_W+B_W+t(\pi^*z)_W+(1-\frac{\epsilon}{2})\sum_iE_i=\phi^*(K_X+B+t\pi^*z)+\sum_j b_j F_j,$$
where $B_W$ and $(\pi^*z)_W$ are the strict transforms of $B$ and $\pi^*z$ on $W$ respectively, $E_i\neq F_j$ are the $\phi$-exceptional prime divisors, and $b_j\ge\frac{\epsilon}{2}-1$ are real numbers. We may also write
$$K_W+B_W=\phi^*(K_X+B)+\sum_i a_iE_i+\sum_j c_j F_j,$$
for some real numbers $a_i,c_j\ge \epsilon-1$. 

Pick a small enough positive real number $\mu$ such that
$$(1-\mu)(1-\frac{\epsilon}{2})-\mu\max_i{a_i}>0.$$
It follows that 
$$\Delta_W:=B_W+(1-\mu)t (\pi^*z)_W+\sum_i((1-\mu)(1-\frac{\epsilon}{2})-\mu a_i)E_i\ge0.$$ 
Note that $(X,B+(1-\mu) t \pi^*z)$ is $\frac{\epsilon}{2}$-lc, and
$$K_W+\Delta_W=\phi^*(K_X+B+(1-\mu)t\pi^*z)+\sum_j((1-\mu)b_j+\mu c_j)F_j,$$
where $(1-\mu)b_j+\mu c_j\geq\frac{\epsilon}{2}-1$. 
As $(W,\Delta_W+\sum_jF_j)$ is dlt and 
$$K_W+\Delta_W+\sum_jF_j\sim_{X,\Qq}\sum_j(1+(1-\mu)b_j+\mu c_j)F_j\geq \frac{\epsilon}{2}\sum_jF_j,$$
we could run an MMP on $K_W+\Delta_W+\sum_jF_j$ over $X$ and it terminates with a model $X'$ over $X$. Note that the MMP contracts only $\sum_jF_j$. Therefore we have 
$$K_{X'}+\Delta'=\phi'^*(K_X+B+(1-\mu)t\pi^*z)\sim_{\Qq,Z}0,$$
where $\Delta'$ is the push-forward of $\Delta_W$ on $X'$ and we denote by $\phi':X'\to X$.

Now $(X',\Delta')$ and $\pi':X'\to Z$ satisfy the assumptions in Conjecture \ref{conj:bddfiber} by replacing $\epsilon$ with $\frac{\epsilon}{2}$, thus $n_i\le m$ for some positive integer $m$ depending only on $d$ and $\epsilon$, where 
$$\pi'^*z=\sum_i n_iE_i'\ + \ \text{(other components)}$$
and $E_i'$ are the push-forwards of $E_i$ on $X'$. It is not hard to see that, for each $i$, we have
$$tn_{i}-a_{i}=1-\frac{\epsilon}{2}.$$
Therefore we can see that
$$t=\frac{1-\frac{\epsilon}{2}+a_{i}}{n_{i}}\geq\frac{\epsilon}{2m}=:\delta.$$
We conclude that $\delta$ has the required property as 
$$\lct(X,B;\pi^*z)>\frac{\epsilon}{2}\text{-}\lct(X,B;\pi^*z)\geq\delta.$$

When $\dim Z>1$, by the proof of \cite[Lemma 3.2]{Bir16}, one can always reduce it to the case when the base is of dimension one. The proof is finished.
\end{proof}

The idea of the proof of the following result follows from \cite[Proposition 3.5]{Bir16}.

\begin{prop}\label{prop:cbfsing}
Let $d$ be a positive integer and $\epsilon$ a positive real number. Assume Conjecuture \ref{conj:bddfiber} holds in dimension $d$. Suppose that $(X,B)$ and $\pi:X\to Z$ satisfy the assumptions in Conjecture \ref{conj:SM}, then the generalized pair $(Z,B_Z+\bM_{\pi})$ is $\delta$-lc for some positive real number $\delta$ depending only on $d$ and $\epsilon$. Here $\bM_\pi$ is the moduli b-divisor of the canonical bundle formula for $(X,B)$ over $Z$.
\end{prop}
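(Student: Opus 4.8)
The plan is to reduce the statement about $\delta$-lc singularities of the generalized base pair $(Z, B_Z + \bM_\pi)$ to the discriminant bound already obtained in Proposition \ref{1.2 to 1.1}, applied over higher birational models of $Z$. First I would recall that to show $(Z, B_Z + \bM_\pi)$ is $\delta$-lc it suffices to bound, for every birational model $\sigma: Z' \to Z$ and every prime divisor $T$ on $Z'$, the coefficient $\mult_T B_{Z'}$ of the discriminant part $B_{Z'}$ of the canonical bundle formula on $Z'$ away from $1-\delta$; here one uses that the moduli b-divisor $\bM_\pi$ is b-nef (by Proposition \ref{prop:cbfabundant}, since $\bM = \mathbf{0}$ and $(X,B)$ is klt), so all the ``bad'' contribution to log discrepancies of the g-pair lives in the discriminant $B_{Z'}$. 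So the crux is a uniform bound on discriminant coefficients over arbitrary models.

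Next, following the strategy of \cite[Proposition 3.5]{Bir16}, I would fix a prime divisor $T$ over $Z$ with center $\bar T := \Center_Z(T)$, and take a suitable birational model $Z' \to Z$ on which $T$ appears as a divisor, chosen so that $Z'$ is $\Qq$-factorial and $T$ is (after a further blow-up or a general hyperplane-section argument) the generic point of a curve germ in $Z'$; then I restrict attention to the codimension-one behavior along $T$. The point is to perform a base change: take a general complete intersection curve $C \subseteq Z'$ (or an appropriate curve in $Z$) meeting $\bar T$ transversally at one point, or more robustly, cut $Z'$ down by general hyperplanes through the generic point of $T$ to reduce to a one-dimensional base, and then pull back the fibration $X \to Z$ to this curve. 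After applying the standard adjunction/restriction properties of the discriminant divisor (the coefficient of $T$ in $B_{Z'}$ is computed by a one-dimensional lc-threshold, $\mult_T B_{Z'} = 1 - \lct(X_{Z'}/Z' \ni \eta_T, B_{Z'}; \pi'^* T)$, which is insensitive to the general cut), I would arrive at a log pair $(X_C, B_C)$ of dimension $\le d$, of Fano type over the curve $C$, with $K_{X_C} + B_C \sim_{\Rr, C} 0$, and $\epsilon'$-lc for a controlled $\epsilon'$ comparable to $\epsilon$ (by inversion of adjunction / Bertini-type genericity of the hyperplane sections). Applying Proposition \ref{1.2 to 1.1} (equivalently Conjecture \ref{conj:SM} in the curve case, which Conjecture \ref{conj:bddfiber} in dimension $d$ supplies) to $(X_C, B_C) \to C$ yields $\lct(X_C, B_C; \pi_C^* (\text{point})) \ge \delta$ for a $\delta = \delta(d,\epsilon)$, hence $\mult_T B_{Z'} \le 1 - \delta$.

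Since the bound $1 - \delta$ obtained this way is independent of the chosen model $Z'$ and prime divisor $T$, it follows that $a(T, Z, B_Z + \bM_\pi) \ge \delta$ for all $T$ over $Z$ — using again b-nefness of $\bM_\pi$ to ensure the moduli part only makes log discrepancies larger, not smaller — and therefore $(Z, B_Z + \bM_\pi)$ is $\delta$-lc, as desired. I would also remark that the dimension bookkeeping works: each general hyperplane cut on $Z$ drops $\dim X$ by one as well, so the fibrations appearing in the reduction have total space of dimension $\le d$, which is exactly the range where Conjecture \ref{conj:bddfiber} is being assumed.

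The main obstacle I anticipate is the genericity/adjunction step: one must verify carefully that cutting $Z$ (and correspondingly $X$) by general hyperplane sections through the generic point of $T$ both (i) preserves the canonical bundle formula in the sense that the coefficient of $T$ in the discriminant is correctly recovered as the corresponding discriminant coefficient on the cut-down base, and (ii) keeps the pair $\epsilon'$-lc with $\epsilon'$ bounded below in terms of $\epsilon$ alone — this is where one needs a Bertini-type statement for $\epsilon$-lc pairs and the behavior of Fano type under restriction. The technical heart is thus essentially the content of \cite[Lemma 3.2]{Bir16} and \cite[Proposition 3.5]{Bir16}, adapted to keep track of the generalized structure on $Z$; the Fano-fibration and Calabi-Yau conditions themselves restrict well, so the delicacy is entirely in the singularity estimate after restriction.
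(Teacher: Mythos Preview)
Your reduction to bounding $\mult_T B_{Z'} \le 1-\delta$ for all prime divisors $T$ on all higher models $Z' \to Z$ is correct, and this is indeed equivalent to the $\delta$-lc statement for the g-pair. The gap is in the next step. When $T$ is exceptional over $Z$, the coefficient $\mult_T B_{Z'}$ is computed from a fibration $(X',B') \to Z'$ with $(X',B')$ crepant to $(X,B)$; but $B'$ will typically have negative coefficients along divisors exceptional over $X$, so $(X',B')$ is only a sub-pair. If you now cut $Z'$ by general hyperplanes to a curve $C$ and restrict, you obtain a sub-pair $(X'_C,B'_C) \to C$, to which neither Conjecture~\ref{conj:bddfiber} nor Proposition~\ref{1.2 to 1.1} applies (both require an effective boundary). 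If instead you pull back $(X,B)$ along the composite $C \hookrightarrow Z' \to Z$, you do get an honest $\epsilon$-lc pair over $C$ by Bertini, but its discriminant at $c = C \cap T$ now records the discriminant of $(X,B)\to Z$ along the image curve $\sigma(C) \subset Z$, \emph{not} $\mult_T B_{Z'}$; these differ precisely because $T$ is exceptional. In short, the hyperplane-cutting argument of \cite[Lemma~3.2]{Bir16} reduces the discriminant bound on $Z$ itself to the curve case, but it does not, by itself, handle exceptional divisors over $Z$; your point (i) fails rather than being a routine check.

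The paper, following \cite[Proposition~3.5]{Bir16}, uses a genuinely different mechanism. One takes a log resolution $\phi\colon W \to X$ mapping to $Z'$, replaces the crepant pull-back by the effective boundary $\Delta_W = B_W + (1-\tfrac{\epsilon}{2})G$ (with $G$ the reduced $\phi$-exceptional divisor), and runs an MMP over the graph of $X \dashrightarrow Z'$ and then over $Z'$ to reach an honest $\tfrac{\epsilon}{2}$-lc Fano-type fibration $(X',\Delta_{X'}) \to Z''$ with $Z'' \to Z'$ birational. One then applies the full Conjecture~\ref{conj:SM} in dimension $d$ (available via Proposition~\ref{1.2 to 1.1}) over the higher-dimensional base $Z''$, not over a curve, to bound the discriminant $\Delta_{Z''} \le 1-\delta$, and finally verifies $B_{Z''} \le \Delta_{Z''}$ by a direct crepant comparison using $q^*(K_{X'}+\Delta_{X'}) \ge p^*(K_X+B)$. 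No hyperplane cutting occurs in this argument.
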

\begin{proof}
By Theorem \ref{thm1}, Conjecture \ref{conj:SM} also holds in dimension $d$. Let $\delta$ be a positive real number satisfying the property of Conjecture \ref{conj:SM} with data $d$ and $\frac{\epsilon}{2}$. We will show that $\delta$ has the required property. 

Let $Z'\to Z$ be a log resolution of $(Z,\Supp B_Z)$ such that $\bM_\pi$ descends to $Z'$. It suffices to show that $B_{Z'}\le 1-\delta.$ Let $\phi: W \rightarrow X$ be a log resolution such that the induced rational map $W \rightarrow Z'$ is a morphism. Let
$$\Delta_{W}:=B_W+(1-\frac{\epsilon}{2}) G$$
where $B_W$ is the strict transform of $B$ on $W$ and $G$ is the reduced sum of exceptional divisor of $\phi$. The pair $(W, \Delta_{W})$ is $\frac{\epsilon}{2}$-lc and we can write
$$K_{W}+\Delta_{W}=\phi^{*}(K_{X}+B)+E$$
where $E \geq 0$ and $\Supp E=\Supp G$. %In particular, if we run an MMP on $K_{W}+\Delta_{W}+\bM_W$ over $X$ (or some open subset of $X$), then the MMP terminates with $X$ (respectively with that open subset).

Let $T$ be the graph of the rational map $X \dashrightarrow Z'$, that is, $T$ is the closure in $X \times Z'$ of the graph of $X_{0} \rightarrow Z'$ where $X_{0} \subseteq X$ is the domain of $X \dashrightarrow Z'$. Since $W$ maps to both $X$ and $Z'$, we get an induced morphism $W \rightarrow T$. Let $U \subseteq Z $ be a non-empty open set over which $Z' \rightarrow Z$ is an isomorphism. We can run an MMP on $K_{W}+\Delta_{W}$ with scaling of some ample divisor over $T$ and it terminates with a model $Y$ on which $K_{Y}+\Delta_{Y}$ is nef over $T$. Since $\pi^{-1} U \subseteq X_{0}$, the morphism $T \rightarrow X$ is an isomorphism over $U$. We may also assume that $U$ intersects nothing with the image of $G$ under $W\to Z$, 
then the morphism $Y \rightarrow T$ is also an isomorphism over $U$. Note that $K_{X}+B\sim_{\bQ,Z} 0,$ we see that $ K_{Y}+\Delta_{Y} \sim_{\bQ,U} 0$.

\begin{center}
  \begin{tikzcd}[column sep = 2em, row sep = 2em]
	W\arrow[rr,dashed]\arrow[d,"\phi"]&& Y\arrow[d,"" swap]\arrow[rr,dashed] &&X'\arrow[dd,"\pi'"]\\
		X\arrow[d,"\pi"]  && T \arrow[ll,""]\arrow[d, "" swap] &&{} \\
		Z && Z'\arrow[ll,""]  && \arrow[ll, "" swap]Z'' 
  \end{tikzcd}
\end{center}

Now, by \cite{BCHM10}, we could run an MMP on $K_{Y}+\Delta_{Y}$ with scaling of some ample divisor over $Z'$ such that it ends up with a model $X'$, on which $K_{X'}+\Delta_{X'}$ is semi-ample over $Z'$ as $\Delta_{Y}$ is big over $Z'$, where $\Delta_{X'}$ is the push-forward of $\Delta_Y$ on $X'$. Recall that $K_{Y}+\Delta_{Y}\sim_{\bQ, U} 0$, we see that $Y \dashrightarrow X'$ is an isomorphism over $U$. Let $\pi': X' \rightarrow Z''$ be the canonical model of $K_{X'}+\Delta_{X'}$ over $Z'$. Then the morphism $Z'' \rightarrow Z'$ is birational. Moreover, we have that $(X', \Delta_{X'})$ is $\frac{\epsilon}{2}$-lc, and $K_{X'}+\Delta_{X'} \sim_{\bQ,Z''} 0$. Let $\Delta_{Z''}$ be the discriminant part on $Z''$ of the canonical bundle formula for $(X',\Delta_{X'})$ over $Z''$. By our choice of $\delta$, we see that $\Delta_{Z''}\le 1-\delta$. Denote by $\tau: Z''\to Z$. It is enough to show that $B_{Z''}\le \Delta_{Z''}$, where $B_{Z''}$ is given by
$$K_{Z''}+B_{Z''}+\bM_{\pi,Z''}=\tau^*(K_Z+B_Z+\bM_{\pi,Z}).$$
Let $p: V \rightarrow X$ and $q: V \rightarrow X'$ be a common resolution of $X$ and $X'$, and let
$$P:=K_{X'}+\Delta_{X'}-q_{*} p^{*}(K_{X}+B).$$
As mentioned above we have
$$K_{W}+\Delta_{W}-\phi^{*}(K_{X}+B)=E \geq 0 .$$
Now $P$ is just the pushdown of $K_{W}+\Delta_{W}-\phi^{*}(K_{X}+B)$ via the rational map $W \dashrightarrow X'$. Therefore, $P \geq 0$. From $K_{X}+B \sim_{\bQ,Z} 0$ we get
$$p^{*}(K_{X}+B)=q^{*}q_{*}p^{*}(K_{X}+B), $$
and this combined with $P \geq 0$ results in
\begin{align*}
q^{*}(K_{X'}+\Delta_{X'})-p^{*}(K_{X}+B)=q^{*}(K_{X'}+\Delta_{X'})-q^{*}q_{*}p^{*}(K_{X}+B)=q^{*} P \geq 0 .
\end{align*}
It follows that $B_{Z''}\le\Delta_{Z''}$ by the definition of the canonical bundle formula. The proof is finished.
\end{proof}

\begin{cor}[cf. {\cite{Amb05,FG12}}]\label{cor:basesing}
In the same setting as in Proposition \ref{prop:cbfsing}, there exists a boundary $\Delta_Z$ on $Z$, such that $(Z,\Delta_Z)$ is $\frac{\delta}{2}$-lc.
\end{cor}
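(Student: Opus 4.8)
The plan is to apply Lemma \ref{lem:gpairtopair} to the generalized pair $(Z, B_Z + \bM_\pi)$ produced by Proposition \ref{prop:cbfsing}. Recall that Proposition \ref{prop:cbfsing} tells us $(Z, B_Z + \bM_\pi)$ is $\delta$-lc for some positive $\delta$ depending only on $d$ and $\epsilon$, and by Proposition \ref{prop:cbfabundant} (which applies since $\bM = \mathbf{0}$ on $X$ and, after the usual reduction replacing $(X,B)$ by a klt model, we may assume $(X,B)$ is klt) the moduli b-divisor $\bM_\pi$ is b-nef and b-abundant. Thus the hypotheses of Lemma \ref{lem:gpairtopair} are met with $\epsilon$ replaced by $\delta$.

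First I would invoke Proposition \ref{prop:cbfsing} to get the $\delta$-lc g-pair $(Z, B_Z + \bM_\pi)$ with $\delta = \delta(d,\epsilon) > 0$. Next I would check the b-abundance of $\bM_\pi$: since $K_X + B \sim_{\Rr, Z} 0$ and $(X,B)$ is $\epsilon$-lc hence klt (or can be made klt by a standard perturbation of the coefficients, using the remark that we may assume $\epsilon$ and the coefficients of $B$ are rational), Proposition \ref{prop:cbfabundant} gives that $\bM_\pi$ is b-nef and b-abundant. Then I would apply Lemma \ref{lem:gpairtopair} to $(Z, B_Z + \bM_\pi)$, obtaining a boundary $\Delta_Z$ on $Z$ with $(Z, \Delta_Z)$ being $\tfrac{\delta}{2}$-lc and $\Delta_Z \sim_\Rr B_Z + \bM_{\pi,Z}$. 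This $\Delta_Z$ is the desired boundary.

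The only subtlety — and the one place where a little care is needed — is ensuring that $(X,B)$ may be taken klt so that Proposition \ref{prop:cbfabundant} applies verbatim; since $(X,B)$ is $\epsilon$-lc with $\epsilon > 0$, it is automatically klt, so in fact there is nothing to do here. Consequently the proof is essentially a two-line concatenation of Proposition \ref{prop:cbfsing}, Proposition \ref{prop:cbfabundant}, and Lemma \ref{lem:gpairtopair}, and I do not anticipate any genuine obstacle; the main point is simply to record that the abundance hypothesis of Lemma \ref{lem:gpairtopair} is supplied by the classical canonical bundle formula results of \cite{Amb05, FG12}.
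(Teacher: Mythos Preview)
Your proposal is correct and matches the paper's proof exactly: the paper's argument is the one-line observation that the result follows immediately from Lemma~\ref{lem:gpairtopair}, Proposition~\ref{prop:cbfabundant}, and Proposition~\ref{prop:cbfsing}. Your observation that $\epsilon$-lc with $\epsilon>0$ already gives klt (so no perturbation is needed for Proposition~\ref{prop:cbfabundant}) is also correct.
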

\begin{proof}
The result follows from Lemma \ref{lem:gpairtopair}, Proposition \ref{prop:cbfabundant} and Proposition \ref{prop:cbfsing} immediately.
\end{proof}

Corollary \ref{cor:basesing} essentially tells us that, under the assumption of Conjecture \ref{conj:bddfiber}, for any given fibration $(X, B)\to Z$ as in Conjecture \ref{conj:SM}, there exists a boundary $\Delta_Z$ such that $(Z, \Delta_Z)$ is $\frac{\delta}{2}$-lc, where $\delta$ depends only on $d, \epsilon$. As we do not put any assumption on the coefficients on the boundary (e.g. DCC condition), this provides a suitable inductive frame.

\begin{conj}\label{conj:fano fibration}
Let $d$ be a positive integer and $\epsilon$ a positive real number. Then there exists a positive integer $m$ depending only on $d$ and $\epsilon$ satisfying the following. 
	
Assume that $X$ is a normal variety of dimension $d$ and $\pi:X\to C$ is a contraction such that 
\begin{enumerate}
  \item $X$ is $\epsilon$-lc,
  \item $-K_X$ is ample over $C$ where $C$ is a curve, and 
  \item every fiber of $X\to C$ is irreducible. 
\end{enumerate}
Then $m_z\le m$, where $\pi^*z=m_z F_z$, $F_z$ is the irreducible component of $\pi^*z$, and $z\in C$ is a closed point. 
\end{conj}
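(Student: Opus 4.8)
The statement is local over $C$, so fix the closed point $z\in C$ and replace $C$ by a small affine neighbourhood of $z$; the plan is to bound the multiplicity $m_z$ of the fibre $\pi^{-1}(z)=m_zF_z$ by analysing how the multiple fibre forces singularities on $X$ along $F_z$. After a $\Qq$-factorialization we may assume $X$ is $\Qq$-factorial. Since $-K_X$ is ample over $C$, the general fibre $F$ is an $\epsilon$-lc Fano variety of dimension $d-1$, hence lies in a bounded family by the BAB theorem \cite{Bir19,Bir21}; in particular $(-K_X|_F)^{d-1}\le N$ and $-NK_F$ is very ample for some $N=N(d,\epsilon)$. This controls the general fibre completely, so the whole problem is concentrated at $z$.

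Next I would restrict to $F_z$ by adjunction. Because every fibre is irreducible, $F_z$ is a prime $\Qq$-Cartier divisor with $F_z\sim_{\Qq,C}0$, so $K_X+F_z\sim_{\Qq,C}K_X$ and adjunction gives
$$(K_X+F_z)|_{F_z}=K_{F_z}+\Diff_{F_z}(0),\qquad -\bigl(K_{F_z}+\Diff_{F_z}(0)\bigr)=-K_X|_{F_z}\ \text{ample},$$
so $(F_z,\Diff_{F_z}(0))$ is a log Fano pair of dimension $d-1$. Two facts should then be combined. First, inversion of adjunction: applying it to $(X,F_z)$ (or to $(X,(1-\eta)F_z)$ for small $\eta$, which is permissible since $X$ is $\epsilon$-lc and $F_z$ is $\Qq$-Cartier), the pair $(F_z,\Diff_{F_z}(0))$ is $\epsilon'$-lc for some $\epsilon'=\epsilon'(d,\epsilon)>0$; equivalently every non-zero coefficient of $\Diff_{F_z}(0)$ is $\le 1-\epsilon'$. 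Second, and this is the crucial local computation, at a point $p\in F_z$ the coefficient of $\Diff_{F_z}(0)$ is of the shape $1-\tfrac{a_p}{m_z}$, or at least $\ge 1-\tfrac{a_p}{m_z}$, where $a_p$ is a positive integer controlled by the local singularity type of $X$ at $p$ --- the multiplicity is read off the transverse structure of $X$ along $F_z$, exactly as in the model case $X=(\mathbb{P}^1\times\mathbb{A}^1)/\mu_{m_z}$, where the two quotient singularities are of types $A_{m_z-1}$ and $\tfrac{1}{m_z}(1,1)$ and both different coefficients equal $1-\tfrac{1}{m_z}$. Combining, $1-\tfrac{a_p}{m_z}\le 1-\epsilon'$ forces $m_z\le a_p/\epsilon'$, and since $X$ is $\epsilon$-lc the integers $a_p$ are themselves bounded in terms of $d$ and $\epsilon$; this yields the desired $m=m(d,\epsilon)$.

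The main obstacle is making both inputs of the last step precise and uniform in all dimensions. In dimension two the argument is essentially complete: $F_z$ is a (possibly singular) rational curve, $\Diff_{F_z}(0)$ is supported at finitely many points with coefficients explicitly of the form $1-\tfrac1{n_p}$, the $\epsilon'$-lc condition bounds $\lcm_p n_p$ and hence $m_z$, and inversion of adjunction for surfaces is classical; this is also consistent with Birkar--Di Cerbo--Svaldi \cite{BDCS20}. In higher dimensions the difficulty is that $F_z$ itself moves in a bounded-but-positive-dimensional family of log Fano varieties, the different $\Diff_{F_z}(0)$ can have large support, and its coefficients are only controlled once one understands how the local equation of $z$ vanishes along $F_z$ relative to the (bounded) geometry of the nearby fibres; the clean form $1-\tfrac{a_p}{m_z}$ with $a_p$ uniformly bounded is not at all obvious. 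The route I would pursue is induction on $d$: restrict the fibration structure further along $F_z$, or pass to the lower-dimensional fibration produced by running an MMP on $X$ over $C$, and invoke the inductive hypothesis together with BAB and boundedness of complements --- taking care that the inductive input is only BAB and complement theory and never Conjecture \ref{conj:bddfiber} or Corollary \ref{cor:basesing}, which are conditional on the very statement being attacked. This interlocking of boundedness of the fibre with control of the transverse multiplicity is precisely the content that keeps Conjecture \ref{conj:fano fibration} open.
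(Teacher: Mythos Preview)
This statement is not proved in the paper: it is stated as an open conjecture, and the paper's only result about it is Proposition~\ref{further reduction}, showing that Conjecture~\ref{conj:fano fibration} implies Conjecture~\ref{conj:bddfiber} (and hence, through the other reductions, all the remaining conjectures). There is therefore no proof in the paper to compare your attempt against, and you yourself note at the end that the conjecture remains open.

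Your sketch nonetheless contains a gap beyond the one you flag. You assert that inversion of adjunction applied to $(X,F_z)$, or to $(X,(1-\eta)F_z)$, yields that $(F_z,\Diff_{F_z}(0))$ is $\epsilon'$-lc for some uniform $\epsilon'=\epsilon'(d,\epsilon)>0$, justifying this by ``$X$ is $\epsilon$-lc and $F_z$ is $\Qq$-Cartier''. But the $\epsilon$-lc hypothesis on $X$ alone gives no control on the pair $(X,F_z)$: there is no reason $(X,F_z)$ should even be lc, so inversion of adjunction in the direction you want is simply unavailable. In dimension two this step happens to succeed because the different coefficients are $1-\tfrac{1}{n_p}$ with $n_p$ the local Cartier index of $X$, and the $\epsilon$-lc condition bounds those indices directly; no such mechanism is known in higher dimension. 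Obtaining a uniform $\epsilon'$ here is at least as deep as bounding $\lct(X,0;\pi^*z)$ from below uniformly, which by the reductions of Section~\ref{sec3} is interchangeable with the conjecture itself. So the difficulty already sits at the extraction of $\epsilon'$, not only at the ``clean form $1-\tfrac{a_p}{m_z}$'' that you single out.
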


\begin{prop}\label{further reduction}
Conjecture \ref{conj:fano fibration} in dimension $d$ implies Conjecture \ref{conj:bddfiber} in dimension $d$.
\end{prop}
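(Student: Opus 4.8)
The plan is to run a minimal model program over $C$ so as to reduce $\pi$ to a Mori fiber space, and then to induct on $d$; the case of a Mori fiber space directly over $C$ will be exactly Conjecture \ref{conj:fano fibration}. First I would reformulate the conclusion as a lower bound on a threshold. Replacing $X$ by a small $\Qq$-factorialization --- which changes neither the components of $\pi^{-1}(z)$ and their multiplicities in $\pi^*z$ nor the $\epsilon$-lc property --- we may assume $X$ is $\Qq$-factorial. Setting $b_i:=\coeff_{F_i}B$ and $\ell:=\lct(X,B;\pi^*z)=1-\coeff_z(B_C)$, we have $m_i\le (1-b_i)/\ell\le 1/\ell$ for all $i$, so it suffices to produce $\delta=\delta(d,\epsilon)>0$ with $\ell\ge\delta$. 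Note that $\ell$ is invariant under crepant birational maps over $C$: if $\varphi:X\dashrightarrow X''$ is such a map then $K_X+B$ and $\pi^*z$ pull back to the same b-divisors on a common resolution, so $(X,B+t\pi^*z)$ is lc if and only if $(X'',B''+t\pi''^*z)$ is.

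Since $K_X+B\equiv_C 0$, every step of a $K_X$-MMP over $C$ is $(K_X+B)$-trivial over the base of the corresponding contraction, hence a flop or a crepant divisorial contraction; it preserves the $\epsilon$-lc property of $(X,B)$ and the threshold $\ell$. As $X$ is of Fano type over $C$ this MMP terminates, and --- for $d\ge 2$, the case $d=1$ being trivial --- since $-K_X$ is big over $C$ it cannot end with $K$ nef over $C$, so it ends with a Mori fiber space $g:X'\to T$ over $C$. We may thus assume $\pi$ factors as $X\to T\to C$, the first map $g$ being a Mori fiber space. If $\dim T=1$ then $T=C$ and $g=\pi$: here $\rho(X/C)=1$, so every vertical prime divisor is numerically trivial over $C$ and hence (Zariski's lemma for fibrations over curves) every fiber of $\pi$ is irreducible; moreover $-K_X$ is ample over $C$ and $(X,0)$ is $\epsilon$-lc. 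Conjecture \ref{conj:fano fibration} in dimension $d$ then gives $m_z\le m(d,\epsilon)$, whence $\ell\ge\epsilon/m(d,\epsilon)$.

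If $\dim T\ge 2$ I would argue by induction on $d$; this is legitimate because Conjecture \ref{conj:fano fibration} in dimension $d$ implies it in all dimensions $\le d$ (take products with projective spaces), so Conjecture \ref{conj:bddfiber}, equivalently Conjecture \ref{conj:SM}, holds in dimensions $<d$. Now $\dim T<d$, the variety $T$ is of Fano type over $C$ (the image of a Fano type variety), and by the canonical bundle formula $\ell$ equals the analogous threshold for the induced log Calabi--Yau g-pair $(T,B_T+\bM_g)$ over $h:T\to C$. One then has to check that $(T,B_T+\bM_g)\to C$ is a legitimate instance of the generalized-pair version of the conjecture in dimension $<d$ --- the issue being to bound its singularities --- and apply the inductive hypothesis; to bound those singularities one restricts $T$ to general curves $\Gamma$ and uses that the fibers of the Mori fiber space $g$ are $\epsilon$-lc Fanos, hence bounded by Birkar's theorem \cite{Bir19, Bir21}, together with the fact that $\dim(X'\times_T\Gamma)=d-\dim T+1<d$, so that the inductive hypothesis applies to these lower-dimensional fibrations. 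Taking $\delta=\min\{\epsilon/m(d,\epsilon),\delta'(d,\epsilon)\}$ then finishes the proof.

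The hard part is precisely this last step: controlling the base $T$ of the Mori fiber space when $\dim T\ge 2$. One cannot simply invoke the base-singularity statement Proposition \ref{prop:cbfsing} for $g$, whose total space has dimension $d$, as that would be circular; instead one must exploit the special geometry of a Mori fiber space --- the boundedness of its Fano fibers --- so that the thresholds and discrepancies governing the canonical bundle formula for $g$ reduce, after cutting $T$ by general curves, to genuinely lower-dimensional instances of Conjecture \ref{conj:bddfiber}. A secondary technical point is the verification that the $K_X$-MMP over $C$ is $(K_X+B)$-crepant, so that $\ell$ is the correct birational invariant to propagate down the program.
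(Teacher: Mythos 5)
Your proposal has two genuine gaps, and both stem from running the \emph{wrong} MMP. You run an arbitrary $K_X$-MMP over $C$, which may contract the very component $F_{i_0}$ whose multiplicity you want to bound; to recover that multiplicity you are then forced to pass through the lc threshold $\ell=\lct(X,B;\pi^*z)$. But in the $\dim T=1$ case the step ``Conjecture \ref{conj:fano fibration} gives $m_z\le m$, whence $\ell\ge\epsilon/m$'' is unjustified: knowing $\pi'^*z=m_zF_z$ with $m_z\le m$ bounds only the coefficient of $F_z$ in $B'+t\pi'^*z$, not the log discrepancies of other divisors $E$ over $X'$ with center inside $F_z$, for which $a(E,X',B'+tm_zF_z)\ge\epsilon-tm_z\mult_EF_z$ and $\mult_EF_z$ is a priori unbounded. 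Deducing a lower bound on $\ell$ from bounded multiplicities is exactly the content of Proposition \ref{1.2 to 1.1}, whose proof requires extracting the divisors computing the $\frac{\epsilon}{2}$-lc threshold and bounding \emph{their} multiplicities on an auxiliary model $(X',\Delta')$ with $\Delta'\ne B$ --- and that model has a reducible fiber over $z$, so Conjecture \ref{conj:fano fibration} does not apply to it directly. The second gap is the $\dim T\ge2$ case, which you correctly identify as hard and do not carry out: bounding the singularities of the g-pair $(T,B_T+\bM_g)$ is essentially Conjecture \ref{conj:SM} for the $d$-dimensional fibration $g$, and your proposed escape via boundedness of the Fano fibers of $g$ is precisely what is not available (if the fibers were bounded, Conjecture \ref{conj:SM} would already follow by Birkar's work).

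The paper's proof avoids both problems by choosing the MMP adapted to the component in question: fixing $i_0$, one runs an MMP on $K_X+B+\eta\sum_{i\ne i_0}F_i$ over $C$ for $0<\eta\ll1$, which contracts exactly the other components of $\pi^{-1}(z)$ and leaves $F_{i_0}$ intact with its multiplicity $m_{i_0}$; a second MMP on $K_{X'}+(1+\eta')B'$ over $C$ followed by passing to the ample model makes $-K_{X''}$ ample over $C$. One then reads off $m_{i_0}\le m$ directly from Conjecture \ref{conj:fano fibration} applied to $X''\to C$, with no lc threshold, no induction on dimension, and no intermediate Mori fiber space over a higher-dimensional base. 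I would suggest restructuring your argument around this idea: make the component survive, rather than trying to reconstruct its multiplicity after it has been contracted.
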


\begin{proof}
Let $\pi: (X, B)\to C$ be a contraction given in Conjecture \ref{conj:bddfiber}. Possibly replacing $X$ with a small $\Qq$-factorialization, we may assume that $X$ is $\Qq$-factorial. Write $\pi^*z=\sum m_i F_i$, we aim to show that there exists a positive integer $m$ depending only on $d$ and $\epsilon$ such that $m_i\leq m$ for every $i$. We focus on $i_0$ among these $i$. 
Run an MMP on $K_{X}+B+\eta \sum_{i\ne i_0}F_{i}$ over $C$ for some $0<\eta\ll 1$, we get a good minimal model $\phi:X\dashrightarrow X'/C$ such that  $F_{i_0}':=\phi_*F_{i_0}$ is the only component in the fiber over $z$ for $\pi': X'\to C$. Denote by $B':=\phi_*B$. Further run an MMP on $K_{X'}+(1+\eta')B'$ over $C$ for some $0<\eta'\ll 1$, we get a good minimal model $X'\dashrightarrow X''/C$ such that $-K_{X''}$ is big and nef over $C$. Replace $X''$ with its ample model with respect to $-K_{X''}$ over $C$, we may assume $-K_{X''}$ is ample over $C$. Denote by $\pi'': X''\to C$ and $B''$ the push-forward of $B'$ on $X''$,  we see that
$\pi'': (X'', B'')\to C$ satisfies all the conditions in Conjecture \ref{conj:fano fibration} and $\pi''^*z=m_{i_0}F_{i_0}''$, where $F_{i_0}''$ is the push-forward of $F_{i_0}'$ on $X''$. By Conjecture \ref{conj:fano fibration}, we see that there exists a required $m$ such that $m_{i_0}\leq m$. The proof is finished.
\end{proof}

\begin{cor}
Conjecture \ref{conj:fano fibration} in dimension d implies Conjecture \ref{conj:SM} in dimension d.
\end{cor}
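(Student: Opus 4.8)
The plan is to obtain this as an immediate concatenation of the two reduction statements already proved in this section. Concretely, I would argue: given data $d$ and $\epsilon$ and a log pair $(X,B)$ with $\pi:X\to Z$ as in Conjecture \ref{conj:SM}, I first invoke Proposition \ref{further reduction}, which says that Conjecture \ref{conj:fano fibration} in dimension $d$ implies Conjecture \ref{conj:bddfiber} in dimension $d$. Thus, under our hypothesis, Conjecture \ref{conj:bddfiber} holds in dimension $d$. Then I apply Theorem \ref{thm1} (equivalently Proposition \ref{1.2 to 1.1}), which says Conjecture \ref{conj:bddfiber} in dimension $d$ implies Conjecture \ref{conj:SM} in dimension $d$. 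Composing the two implications yields that Conjecture \ref{conj:fano fibration} in dimension $d$ implies Conjecture \ref{conj:SM} in dimension $d$, which is exactly the assertion.

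Since both constituent implications are already fully established earlier in the paper, there is no real obstacle here; the only point worth being slightly careful about is the bookkeeping of the numerical constants, i.e.\ tracking that the integer $m$ furnished by Conjecture \ref{conj:fano fibration} (depending only on $d$ and $\epsilon$, or on $d$ and $\tfrac{\epsilon}{2}$ after the standard halving of the coefficient used in Proposition \ref{1.2 to 1.1}) feeds through Proposition \ref{further reduction} to give the constant in Conjecture \ref{conj:bddfiber}, which in turn feeds through Proposition \ref{1.2 to 1.1} to produce the final $\delta$ of the form $\tfrac{\epsilon}{2m}$ (and the further reduction from $\dim Z>1$ to $\dim Z=1$ via \cite[Lemma 3.2]{Bir16} is already absorbed in the proof of Proposition \ref{1.2 to 1.1}). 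All of these dependencies are on $d$ and $\epsilon$ only, so the conclusion respects the quantifier structure of Conjecture \ref{conj:SM}, and the proof is complete.
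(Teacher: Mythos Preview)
Your proposal is correct and matches the paper's own proof, which simply reads ``Combine Proposition \ref{1.2 to 1.1} with Proposition \ref{further reduction}.'' The extra remarks you add about tracking the constants are accurate but not needed for the argument.
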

\begin{proof}
Combine Proposition \ref{1.2 to 1.1} with Proposition \ref{further reduction}.
\end{proof}

\subsection{Generalized version of Conjecture \ref{conj:SM}}

\begin{conj}\label{conj:gSM}
Let $d$ be a positive integer and $\epsilon$ a positive real number. Then there exists a positive real number $\delta$ depending only on $d$ and $\epsilon$ satisfying the following. 
	
Assume that $(X,B+\bM)$ is a g-pair of dimension $d$ and $\pi:X\to Z$ is a contraction between projective normal varieties such that 
\begin{enumerate}
  \item $(X,B+\bM)$ is $\epsilon$-lc,
  \item $X$ is of Fano type over $Z$, and
  \item $K_X+B+\bM_X\sim_{\Rr,Z}0$.	
\end{enumerate}
Then $B_Z\le 1-\delta$, where $B_Z$ is the discriminant part of the canonical bundle formula for $(X,B+\bM)$ over $Z$. %	we can choose $M_Z\ge0$ representing the moduli part such that $(Z,B_Z+M_Z)$ is $\delta$-lc. 
\end{conj}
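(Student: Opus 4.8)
The plan is to reduce Conjecture \ref{conj:gSM} to its non-generalized form Conjecture \ref{conj:SM}, and hence, via Theorem \ref{thm1}, to Conjecture \ref{conj:bddfiber} in the same dimension. The strategy runs parallel to Proposition \ref{prop:cbfsing}, the only new step being to trade the nef b-$\Rr$-divisor $\bM$ for an honest boundary by means of Lemma \ref{lem:gpairtopair}.

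Concretely, I would first apply Lemma \ref{lem:gpairtopair} to $(X,B+\bM)$ --- assuming, as that lemma requires, that $\bM$ is b-abundant --- to obtain a boundary $\Delta$ with $(X,\Delta)$ being $\frac{\epsilon}{2}$-lc and $\Delta\sim_\Rr B+\bM_X$. Then $K_X+\Delta\sim_\Rr K_X+B+\bM_X\sim_{\Rr,Z}0$, $X$ is still of Fano type over $Z$, and $\dim X=d$, so $(X,\Delta)\to Z$ satisfies the hypotheses of Conjecture \ref{conj:SM} with $\epsilon$ replaced by $\frac{\epsilon}{2}$. Assuming Conjecture \ref{conj:bddfiber} in dimension $d$, Theorem \ref{thm1} gives Conjecture \ref{conj:SM} in dimension $d$, so there is a positive real number $\delta=\delta(d,\epsilon)$ with $\Delta_Z\le 1-\delta$, where $\Delta_Z$ is the discriminant part of the canonical bundle formula for $(X,\Delta)$ over $Z$. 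Note that $Z$ is allowed to be arbitrary here, so no reduction to the case of a curve is needed at this stage.

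It then remains to compare discriminants and conclude $B_Z\le\Delta_Z$. I would argue prime divisor by prime divisor on $Z$. Passing to a sufficiently high model $f:X'\to X$ that is a log resolution of $(X,\Supp B)$ and to which $\bM$ descends, one may realize the decomposition of Lemma \ref{lem:gpairtopair} on $X'$, so that $K_{X'}+B'+\bM_{X'}=f^*(K_X+B+\bM_X)$ and $f^*(K_X+\Delta)=K_{X'}+B'+E'+A'$ with $E',A'\ge 0$; hence $a(Q,X,\Delta)\le a(Q,X,B+\bM)$ for every prime divisor $Q$ over $X$. Therefore, for any prime divisor $P\subseteq Z$ and any $t\ge 0$, sub-lc-ness of $(X,\Delta+t\pi^*P)$ over the generic point $\eta_P$ forces sub-lc-ness of the g-sub-pair $(X,(B+t\pi^*P)+\bM)$ over $\eta_P$, so that
$$\lct\bigl(X/Z\ni\eta_P,\,B+\bM;\,\pi^*P\bigr)\ \ge\ \lct\bigl(X/Z\ni\eta_P,\,\Delta;\,\pi^*P\bigr),$$
giving $\mult_P B_Z\le\mult_P\Delta_Z$. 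Summing over $P$ yields $B_Z\le\Delta_Z\le 1-\delta$, as desired.

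The main obstacle is precisely the b-abundance input needed to run Lemma \ref{lem:gpairtopair}: Conjecture \ref{conj:gSM} as stated only asks that $\bM$ be nef. One therefore either proves the statement under the extra hypothesis that $\bM$ is b-abundant --- in which case the three steps above already reduce it to Conjecture \ref{conj:SM} --- or one must show that in the Fano-type-over-$Z$ situation $\bM$ may be replaced by a b-abundant nef b-divisor without changing $B_Z$. The latter should start from the fact that $\bM_X$ restricted to a general fibre is a nef class on a variety of Fano type, hence semiample by the base-point-free theorem, so that $\bM_X$ is semiample over $Z$; turning this relative semiampleness into genuine b-abundance over $X$ concerns the interaction of $\bM_X$ with the base $Z$, and controlling the Iitaka dimension in the $Z$-direction (where, for instance, $\bM_X$ could be a pullback of a non-abundant nef divisor on $Z$) is where I expect the real difficulty to lie --- it may well be cleanest simply to include b-abundance of $\bM$ among the hypotheses.
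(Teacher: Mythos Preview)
Your route --- replace $\bM$ by an honest boundary via Lemma \ref{lem:gpairtopair} and then invoke Conjecture \ref{conj:SM} --- is different from the paper's, and the obstacle you flag (b-abundance of $\bM$) is real and is precisely why the paper does not argue this way.

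The paper instead factors the implication as Conjecture \ref{conj:bddfiber} $\Rightarrow$ Conjecture \ref{conj:g-bddfiber} $\Rightarrow$ Conjecture \ref{conj:gSM} (Proposition \ref{pair to g-pair}). The second arrow is exactly the g-pair analogue of Proposition \ref{1.2 to 1.1}, run with the MMP for g-pairs from \cite{BZ16}; nothing about $\bM$ beyond nefness is used there. The first arrow is where the paper disposes of the abundance issue, and it does so by the very manoeuvre you were circling around at the end: over a \emph{curve} base $C$, one first runs an MMP on $K_X+B+(1+\mu)\bM_X$ over $C$ (legitimate since $X$ is of Fano type over $C$) to reach a model on which $\bM$ is semi-ample over $C$, and then one \emph{twists by a pullback}: choose $A$ ample on $C$ so that $\bM_X+\pi^*A$ is globally semi-ample and $(X,B+(\bM+\overline{\pi^*A}))$ is still $\epsilon$-lc. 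A general member $M\in|\bM_X+\pi^*A|_\Qq$ then lets one replace the g-pair by an ordinary pair $(X,B+M)$ satisfying the hypotheses of Conjecture \ref{conj:bddfiber}. Because the twist is a pullback from $C$, it changes neither the fibre multiplicities nor the lc thresholds over closed points of $C$, so the reduction is lossless. This is exactly the device that resolves your worry that ``$\bM_X$ could be a pullback of a non-abundant nef divisor on $Z$'': one absorbs that possibility by adding an ample pullback, which is harmless for the discriminant but forces semi-ampleness.

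In short, your discriminant-comparison argument is fine once $\bM$ is b-abundant, but as stated Conjecture \ref{conj:gSM} does not grant that, and the missing idea is the reduction to a curve base followed by the pullback twist. With that in hand one may either follow the paper (bound multiplicities first, then run the g-pair version of Proposition \ref{1.2 to 1.1}) or, equivalently, plug the now semi-ample $\bM+\overline{\pi^*A}$ into your Lemma \ref{lem:gpairtopair} route.
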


\begin{conj}\label{conj:g-bddfiber}
Let $d$ be a positive integer and $\epsilon$ a positive real number. Then there exists a positive integer $m$ depending only on $d$ and $\epsilon$ satisfying the following. 
	
Assume that $(X,B+\bM)$ is g-pair of dimension $d$ and $\pi:X\to C$ is a contraction such that 
\begin{enumerate}
  \item $(X,B+\bM)$ is $\epsilon$-lc,
  \item $X$ is of Fano type over $C$ where $C$ is a curve, and
  \item $K_X+B+\bM_X\sim_{\Rr,C}0$.	
\end{enumerate}
Then $m_i\le m$ for every $i$, where $\pi^*z=\sum_i m_iF_i$, $F_i$ are the irreducible components of $\pi^*z$, and $z\in C$ is a closed point. 
\end{conj}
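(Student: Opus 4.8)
The plan is to deduce Conjecture \ref{conj:g-bddfiber} from Conjecture \ref{conj:fano fibration} (equivalently, by Proposition \ref{further reduction}, from Conjecture \ref{conj:bddfiber}), by running the same sequence of relative MMPs as in the proof of Proposition \ref{further reduction} while carrying the nef b-divisor $\bM$ along. The basic observation is that the integers $m_i$ depend only on the contraction $\pi\colon X\to C$, so we may freely modify the generalized pair structure on $X$ provided it stays $\epsilon$-lc, stays $\Rr$-linearly trivial over $C$, and $X$ stays of Fano type over $C$.

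First I would replace $X$ by a small $\Qq$-factorialization of $(X,B+\bM)$ (this affects neither $C$ nor the $m_i$), so that $X$ is $\Qq$-factorial. Fixing an index $i_0$, it suffices to bound $m_{i_0}$. Running a $\bigl(K_X+B+\bM_X+\eta\sum_{i\neq i_0}F_i\bigr)$-MMP over $C$ for $0<\eta\ll1$ is, since $K_X+B+\bM_X\sim_{\Rr,C}0$, a generalized MMP on the effective divisor $\eta\sum_{i\neq i_0}F_i$ supported in $\pi^{-1}(z)$; using that $X$ is of Fano type over $C$ it terminates with a good minimal model $\phi\colon X\dashrightarrow X'/C$, and, exactly as in Proposition \ref{further reduction}, it contracts only the $F_i$ with $i\neq i_0$, so that $\pi'^*z=m_{i_0}F_{i_0}'$ with $F_{i_0}'=\phi_*F_{i_0}$, while $(X',B'+\bM)$ with $B'=\phi_*B$ is $\epsilon$-lc, satisfies $K_{X'}+B'+\bM_{X'}\sim_{\Rr,C}0$, and $X'$ is again of Fano type over $C$. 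Next I would run a $\bigl(K_{X'}+(1+\eta')(B'+\bM_{X'})\bigr)$-MMP over $C$ for $0<\eta'\ll1$; since $K_{X'}+B'+\bM_{X'}\sim_{\Rr,C}0$ and $-K_{X'}$ is big over $C$ (Fano type), the perturbation $\eta'(B'+\bM_{X'})\sim_{\Rr,C}-\eta'K_{X'}$ is big over $C$, so the MMP terminates with a good minimal model $X'\dashrightarrow X''/C$ on which $B''+\bM_{X''}\sim_{\Rr,C}-K_{X''}$ is nef over $C$; as $X''$ stays of Fano type over $C$, $-K_{X''}$ is big and nef, hence semiample, over $C$, and replacing $X''$ by the ample model of $-K_{X''}$ over $C$ we may assume $-K_{X''}$ is ample over $C$. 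The contraction $\pi''\colon X''\to C$ then satisfies the hypotheses of Conjecture \ref{conj:fano fibration} with the same $d$ and $\epsilon$ (in particular $X''$ is $\epsilon$-lc since $(X'',B''+\bM)$ is), and $\pi''^*z=m_{i_0}F_{i_0}''$, so Conjecture \ref{conj:fano fibration} yields $m_{i_0}\le m$ for some $m=m(d,\epsilon)$; by Proposition \ref{further reduction} the same bound also follows from Conjecture \ref{conj:bddfiber}.

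I expect the main obstacle to be the minimal model theory underlying these two steps: one needs the relative generalized MMPs above to run and to terminate with good minimal models, which requires the generalized MMP of \cite{BZ16} (and its refinements) together with the Mori-dream-space behaviour of generalized pairs that are of generalized Fano type over a curve. By contrast, preservation of $\epsilon$-lc-ness comes for free: since $K_X+B+\bM_X\sim_{\Rr,C}0$, every extremal ray contracted above meets this class in zero, so each step is crepant for $(X,B+\bM)$ over $C$, and it is only the auxiliary perturbations $\eta\sum_{i\neq i_0}F_i$ and $\eta'(B'+\bM_{X'})$ that get modified; the remaining bookkeeping (that the multiplicity $m_{i_0}$ is unchanged by contracting the $F_i$ with $i\neq i_0$ and by passing to the ample model of $-K$, and that the fibers acquire the irreducibility demanded by Conjecture \ref{conj:fano fibration}) is exactly as in the non-generalized Proposition \ref{further reduction}.

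Finally, once Conjecture \ref{conj:g-bddfiber} is available, Conjecture \ref{conj:gSM} should follow by running the argument of Proposition \ref{1.2 to 1.1} verbatim with the pair $(X,B)$ replaced by the generalized pair $(X,B+\bM)$ and the canonical bundle formula replaced by its generalized-pair version; alternatively, in the special case where $\bM$ is b-abundant one may first apply Lemma \ref{lem:gpairtopair} to replace the generalized pair by an honest pair and then invoke Conjecture \ref{conj:bddfiber} directly.
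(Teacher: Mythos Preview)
Your approach is correct but takes a genuinely different second step from the paper's. Both you and the paper begin identically: pass to a $\Qq$-factorialization, fix $i_0$, and run a generalized MMP on $K_X+B+\bM_X+\eta\sum_{i\neq i_0}F_i$ over $C$ (the paper cites \cite[Lemma 4.4]{BZ16}) to contract the other fiber components, so that the fiber over $z$ becomes irreducible. After that the routes diverge. The paper runs a second MMP on $K_X+B+(1+\mu)\bM_X$ over $C$ to make $\bM_X$ \emph{semi-ample} over $C$, then twists by the pullback of a sufficiently ample divisor $A$ from $C$ so that $\bM_X+\pi^*A$ is globally semi-ample, replaces it by a general member $M\in|\bM_X+\pi^*A|_\Qq$, and applies Conjecture~\ref{conj:bddfiber} directly to the honest pair $(X,B+M)\to C$. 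You instead run your second MMP on $K_{X'}+(1+\eta')(B'+\bM_{X'})$ over $C$ and pass to the relative anticanonical ample model, discarding the generalized-pair structure entirely and invoking Conjecture~\ref{conj:fano fibration} for the bare variety. Your route is a shade more economical (no need to realize $\bM$ as an effective divisor), while the paper's route lands directly on Conjecture~\ref{conj:bddfiber} without the detour through Conjecture~\ref{conj:fano fibration}. One small imprecision: your parenthetical ``equivalently, by Proposition~\ref{further reduction}'' only supplies the implication Conjecture~\ref{conj:fano fibration} $\Rightarrow$ Conjecture~\ref{conj:bddfiber}; the reverse implication (needed if you want to present your argument as a deduction from Conjecture~\ref{conj:bddfiber}, as the paper does) requires a short Bertini argument that you do not spell out. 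Your final paragraph on deducing Conjecture~\ref{conj:gSM} matches the paper's one-line indication exactly.
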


\begin{prop}\label{pair to g-pair}
Conjecture \ref{conj:bddfiber} in dimension $d$ implies Conjectures \ref{conj:gSM} and \ref{conj:g-bddfiber} in dimension $d$.
\end{prop}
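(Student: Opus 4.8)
The plan is to reduce the generalized statements to the non-generalized Conjecture \ref{conj:bddfiber} by replacing the nef part $\bM$ with an honest boundary, using the abundance of the moduli b-divisor coming from the canonical bundle formula together with Lemma \ref{lem:gpairtopair}. Since Conjecture \ref{conj:bddfiber} in dimension $d$ already implies Conjecture \ref{conj:SM} in dimension $d$ (Theorem \ref{thm1}), and the g-pair version of Conjecture \ref{conj:SM} follows from the g-pair version of Conjecture \ref{conj:bddfiber} by the same base-reduction argument as in \cite[Lemma 3.2]{Bir16}, it suffices to prove Conjecture \ref{conj:g-bddfiber}; indeed, repeating the argument of Proposition \ref{1.2 to 1.1} verbatim (with pairs replaced by g-pairs throughout) reduces Conjecture \ref{conj:gSM} to Conjecture \ref{conj:g-bddfiber}, so the real content is Conjecture \ref{conj:bddfiber} $\Rightarrow$ Conjecture \ref{conj:g-bddfiber}.

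First I would set up the canonical bundle formula in the right place. Given $\pi:(X,B+\bM)\to C$ as in Conjecture \ref{conj:g-bddfiber}, since $X$ is of Fano type over $C$ we may run an MMP and reduce to the case where $(X,B+\bM)$ is in fact a klt g-pair with $-K_X$ big over $C$; more importantly, by Proposition \ref{prop:cbfabundant} applied to a suitable auxiliary fibration — or rather, by the fact that for $X$ of Fano type over $C$ the moduli b-divisor of any intermediate contraction is b-nef and b-abundant (this is exactly the mechanism used in \cite[Lemma 3.4]{DCS16} and in the proof of Lemma \ref{lem:birkarbddmodflop}) — one obtains on $X$ itself the abundance needed. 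Concretely, I would run a $(K_X+B)$-MMP over $C$ and use \cite{Amb05}/\cite{FG12} to replace $(X,B+\bM)$ by a crepant klt pair $(X,B')$ with $K_X+B'\sim_{\Rr,C}0$, where the multiplicities $m_i$ of the fiber $\pi^*z$ are unchanged because the MMP over $C$ is an isomorphism near the generic point of a general fiber and, with the usual care about which components get contracted (as in the proof of Proposition \ref{further reduction}), one can arrange that each fixed component $F_{i_0}$ survives with its multiplicity.

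The cleaner route, which I would actually carry out, is: fix the index $i_0$; exactly as in Proposition \ref{further reduction}, run MMPs over $C$ to reduce to the situation where the fiber over $z$ is irreducible and $-K_X$ is ample over $C$, so $(X,\Delta+\bM)\to C$ is a g-pair Mori–Fano fibration over the curve with $K_X+\Delta+\bM_X\sim_{\Rr,C}0$ and $\pi^*z=m_{i_0}F$. Now apply the canonical bundle formula over $C$: write $K_X+\Delta+\bM_X\sim_\Rr \pi^*(K_C+B_C+\bM_{\pi,C})$; here $\bM_\pi$ is b-nef and b-abundant (this is where I use that $X$ is of Fano type over $C$, so the moduli part is the pushforward of a nef abundant class — by Proposition \ref{prop:cbfabundant} after passing to the associated pair, or by \cite[Lemma 3.4]{DCS16}). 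Since $C$ is a curve the divisor $B_C+\bM_{\pi,C}$ is just a finite sum of points with coefficients in $[0,1]$, and the coefficient at $z$ records exactly the generalized log canonical threshold $\lct(X/C\ni z,\Delta+\bM;\pi^*z)$, which controls $m_{i_0}$ via the relation $m_{i_0}\cdot(\text{lct})=1-(\text{coeff of }B_C\text{ at }z)+(\text{discrepancy terms})$, mirroring the identity $tn_i-a_i=1-\tfrac\epsilon2$ in Proposition \ref{1.2 to 1.1}. Using Lemma \ref{lem:gpairtopair} to replace $\bM$ on $X$ by an honest boundary loses only a factor of $2$ in $\epsilon$, turning the g-pair into an ordinary $\tfrac\epsilon2$-lc pair over $C$ with the same fiber multiplicity $m_{i_0}$; Conjecture \ref{conj:bddfiber} in dimension $d$ (applied with $\tfrac\epsilon2$) then bounds $m_{i_0}$ by some $m$ depending only on $d$ and $\epsilon$. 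Since $i_0$ was arbitrary, this proves Conjecture \ref{conj:g-bddfiber}, and hence Conjectures \ref{conj:gSM} (via the Proposition \ref{1.2 to 1.1} argument) and \ref{conj:g-bddfiber}.

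The main obstacle is the step asserting that the moduli b-divisor $\bM_\pi$ of the canonical bundle formula is b-nef and b-abundant when $X$ is merely of Fano type over $C$ and $\bM\neq\mathbf 0$: Proposition \ref{prop:cbfabundant} as stated requires $\bM=\mathbf 0$ and $(X,B)$ klt. The fix is to first apply Lemma \ref{lem:gpairtopair} on the total space $X$ — absorbing the nef part into the boundary at the cost of $\epsilon\mapsto\epsilon/2$ — and only then take the canonical bundle formula, so that one is in the honest-pair setting of Proposition \ref{prop:cbfabundant}; one must check that this absorption can be done $C$-relatively and without changing the fiber multiplicities $m_i$, which holds because the modification in Lemma \ref{lem:gpairtopair} is $\Rr$-linearly trivial relative to $C$ after pushforward and is an isomorphism over the generic point of $C$. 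A secondary, purely bookkeeping, obstacle is to make the reductions-by-MMP of Proposition \ref{further reduction} work in the g-pair category; this is routine given that MMP for g-pairs is available in the literature and that every Fano-type MMP over $C$ used here is relatively Calabi–Yau, hence crepant.
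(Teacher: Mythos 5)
Your overall strategy coincides with the paper's: fix one component $F_{i_0}$ of the fiber, contract the other components by a relative MMP, absorb the nef part $\bM$ into an honest boundary so that Conjecture \ref{conj:bddfiber} applies, and deduce Conjecture \ref{conj:gSM} from Conjecture \ref{conj:g-bddfiber} by rerunning the argument of Proposition \ref{1.2 to 1.1} with the g-MMP of \cite{BZ16}. However, the step where you absorb $\bM$ into the boundary has a genuine gap. Lemma \ref{lem:gpairtopair} requires the nef part $\bM$ to be \emph{b-abundant}, and in Conjecture \ref{conj:g-bddfiber} the b-divisor $\bM$ is an arbitrary nef part of the given g-pair: it is not the moduli b-divisor of any lc-trivial fibration, so Proposition \ref{prop:cbfabundant} (which concerns the moduli part of a klt pair with $\bM=\mathbf 0$) says nothing about it. You flag this as ``the main obstacle,'' but your proposed fix --- ``first apply Lemma \ref{lem:gpairtopair} on the total space'' --- is precisely the step whose hypothesis is unverified: nothing in the Fano-type-over-$C$ assumption forces a nef b-divisor to be abundant, since $\bM$ descends only on some higher birational model of $X$, and such models need not be of Fano type over $C$ and can carry nef, non-abundant classes. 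Your appeal to the canonical bundle formula over $C$ does not help either; the abundance of the moduli part \emph{on the curve $C$} is automatic and irrelevant to making $\bM$ on $X$ into a boundary.

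The paper closes this gap by a different mechanism. After contracting the superfluous fiber components, one runs an MMP on $K_X+B+(1+\mu)\bM_X$ over $C$ (possible by \cite[Lemma 4.4]{BZ16} since $X$ is of Fano type over $C$) to reach a crepant model on which $\bM$ is semi-ample \emph{over} $C$; one then adds the pullback $\pi^*A$ of a general member of a very ample linear system on the curve $C$, which upgrades relative semi-ampleness to global semi-ampleness of $\bM_X+\pi^*A$ without changing any fiber of $\pi$. A general member $M\in|\bM_X+\pi^*A|_{\Qq}$ can then be put into the boundary keeping the pair $\epsilon$-lc (no loss of a factor of $2$) and leaving the multiplicity $m_{i_0}$ untouched, after which Conjecture \ref{conj:bddfiber} applies directly. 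This relative-semi-ampleness-plus-$\pi^*A$ trick is the missing ingredient in your argument.
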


\begin{proof}
We first show that Conjecture \ref{conj:bddfiber} implies Conjecture \ref{conj:g-bddfiber}.

Let $(X, B+\bM)$ be a g-pair and $\pi:X\to C$ a contraction as in Conjecture \ref{conj:g-bddfiber}. Possibly replacing $X$ with a $\Qq$-factorialization, one can assume that $X$ is $\Qq$-factorial. Write $\pi^*z=\sum m_iF_i$. We focus on $i_0$ among these $i$. One can run an MMP on $K_X+B+\mu \sum_{i\ne i_0}F_{i}+\bM_X$ over $C$ for some $0<\mu\ll 1$, which ends with a good minimal model $X\dashrightarrow X'$ over $C$, cf. \cite[Lemma 4.4]{BZ16}. This MMP contracts all components in $\pi^*z$ except $F_{i_0}$. Replace $X$ with $X'$ we may assume that $\pi^*z$ is irreducible.

Then we could run MMP on $K_X+B+(1+\mu)\bM_X$ over $C$ for some $0<\mu\ll 1$ and it ends with a good minimal model $X\dashrightarrow X''$ over $C$ (e.g.  \cite[Lemma 4.4]{BZ16}). In other words, $\bM_{X''}$ is semi-ample over $C$. Note that $(X, B+\bM)$ and $(X'', B''+\bM)$ are crepant to each other, thus we may assume $\bM_X$ is semi-ample over $C$ via replacing $X$ with $X''$. Pick a general element $A$ in a very ample linear system on $C$ such that 
\begin{enumerate}
\item $\bM_X+\pi^*A$ is semi-ample, and
\item the g-pair $(X, B+(\bM+\overline{\pi^*A}))$ is still $\epsilon$-lc with nef part $\bM+\overline{\pi^*A}$.
\end{enumerate}
Let $M\in |\bM_X+\pi^*A|_\bQ$ be a general element, then it is not hard to see that the fibration $(X,B+M)\to C$ satisfies all the conditions in Conjecture \ref{conj:bddfiber}. Note that $\pi^*z=m_{i_0}F_{i_0}$, by Conjecture \ref{conj:bddfiber}, there exists a positive integer $m$ depending only on $d$ and $\epsilon$ such that $m_{i_0}\leq m$.

\medskip

Next we turn to show that Conjecture \ref{conj:bddfiber} implies Conjecture \ref{conj:gSM}. It is enough to show that Conjecture \ref{conj:g-bddfiber} implies Conjecture \ref{conj:gSM}. Indeed, the idea of the proof is essentially the same as the proof of Proposition \ref{1.2 to 1.1} but one need to use the termination of MMP for g-pairs developed in \cite{BZ16}, e.g. \cite[Lemma 4.4]{BZ16}.
\end{proof}

Using very similar arguments as in Proposition \ref{prop:cbfsing} and the tools developed in \cite{BZ16}, e.g. \cite[Lemma 4.4]{BZ16}, we can show the following result. We left the proof to the reader.

\begin{prop}\label{g-delta-lc}
Let $d$ be a positive integer and $\epsilon$ a positive real number. Assume Conjecuture \ref{conj:bddfiber} holds in dimension $d$. Suppose that $(X,B+\bM)$ and $\pi:X\to Z$ satisfy the assumptions in Conjecture \ref{conj:gSM}, then the g-pair $(Z,B_Z+\bM_{\pi})$ is $\delta$-lc for some positive real number $\delta$ depending only on $d$ and $\epsilon$. Here $\bM_\pi$ is the moduli b-divisor of the canonical bundle formula for $(X,B+\bM)$ over $Z$.
\end{prop}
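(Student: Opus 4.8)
The plan is to rerun the argument of Proposition~\ref{prop:cbfsing} in the generalized category, replacing each appeal to \cite{BCHM10} by the minimal model program for generalized pairs from \cite{BZ16} (in particular \cite[Lemma 4.4]{BZ16}), and each appeal to Conjecture~\ref{conj:SM} by Conjecture~\ref{conj:gSM}. First, by Proposition~\ref{pair to g-pair}, Conjecture~\ref{conj:bddfiber} in dimension $d$ implies Conjecture~\ref{conj:gSM} in dimension $d$, so I would fix $\delta>0$ satisfying Conjecture~\ref{conj:gSM} with data $d$ and $\tfrac{\epsilon}{2}$ and show this $\delta$ works. As in Proposition~\ref{prop:cbfsing}, since $\bM_\pi$ is b-nef, it suffices to prove that $B_{Z'}\le 1-\delta$ for every log resolution $\rho:Z'\to Z$ of $(Z,\Supp B_Z)$ to which $\bM_\pi$ descends, where $B_{Z'}$ is the trace on $Z'$ of the discriminant b-divisor of the canonical bundle formula for $(X,B+\bM)$ over $Z$.

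Fixing such a $Z'$, I would next choose a log resolution $\phi:W\to X$ of $(X,\Supp B)$ to which $\bM$ descends and such that $W\to Z'$ is a morphism, set $\Delta_W:=B_W+(1-\tfrac{\epsilon}{2})G$ with $B_W$ the strict transform of $B$ and $G$ the reduced $\phi$-exceptional divisor, so that $(W,\Delta_W+\bM)$ is $\tfrac{\epsilon}{2}$-lc and
$$K_W+\Delta_W+\bM_W=\phi^*(K_X+B+\bM_X)+E$$
with $E\ge 0$ and $\Supp E=\Supp G$. Let $T$ be the graph of $X\dashrightarrow Z'$ and $U\subseteq Z$ a non-empty open set over which $Z'\to Z$ is an isomorphism and which meets neither $\Supp B_Z$ nor the image of $G$. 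Running a generalized MMP on $K_W+\Delta_W+\bM_W$ over $T$ with scaling of an ample divisor---which terminates with a good minimal model by \cite[Lemma 4.4]{BZ16}---I obtain a model $Y$ with $K_Y+\Delta_Y+\bM_Y$ nef over $T$; since $K_X+B+\bM_X\sim_{\bQ,Z}0$ and $E$ is supported away from $U$, it follows that $Y\to T$ is an isomorphism over $U$ and $K_Y+\Delta_Y+\bM_Y\sim_{\bQ,U}0$. Because $X$ is of Fano type over $Z$, $B+\bM_X\sim_{\bQ,Z}-K_X$ is big over $Z$, hence $\Delta_W+\bM_W$, and so $\Delta_Y+\bM_Y$, is big over $Z'$; therefore a further generalized MMP on $K_Y+\Delta_Y+\bM_Y$ over $Z'$ terminates (again by \cite[Lemma 4.4]{BZ16}) with a model $X'$ on which $K_{X'}+\Delta_{X'}+\bM_{X'}$ is semi-ample over $Z'$, with $Y\dashrightarrow X'$ an isomorphism over $U$. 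Passing to the canonical model $\pi':X'\to Z''$ of $K_{X'}+\Delta_{X'}+\bM_{X'}$ over $Z'$, the morphism $Z''\to Z'$ is birational, $(X',\Delta_{X'}+\bM)$ is $\tfrac{\epsilon}{2}$-lc, and $K_{X'}+\Delta_{X'}+\bM_{X'}\sim_{\bQ,Z''}0$.

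I would then apply Conjecture~\ref{conj:gSM} to $(X',\Delta_{X'}+\bM)\to Z''$: by the choice of $\delta$, the discriminant part $\Delta_{Z''}$ of the canonical bundle formula for $(X',\Delta_{X'}+\bM)$ over $Z''$ satisfies $\Delta_{Z''}\le 1-\delta$. Writing $\tau:Z''\to Z$ and letting $B_{Z''}$ be defined by $K_{Z''}+B_{Z''}+\bM_{\pi,Z''}=\tau^*(K_Z+B_Z+\bM_{\pi,Z})$, the last step is the discriminant comparison, carried out verbatim as in Proposition~\ref{prop:cbfsing}: on a common resolution $p:V\to X$, $q:V\to X'$, the divisor $P:=(K_{X'}+\Delta_{X'}+\bM_{X'})-q_*p^*(K_X+B+\bM_X)$ is the pushdown of $E\ge0$ hence effective, and together with $p^*(K_X+B+\bM_X)=q^*q_*p^*(K_X+B+\bM_X)$ (from $K_X+B+\bM_X\sim_{\bQ,Z}0$) this yields $q^*(K_{X'}+\Delta_{X'}+\bM_{X'})-p^*(K_X+B+\bM_X)=q^*P\ge0$, whence $B_{Z''}\le\Delta_{Z''}\le 1-\delta$ by the definition of the discriminant divisor. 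Since $Z''\to Z'$ is birational and $\bM_\pi$ descends to both $Z'$ and $Z''$, the coefficient of any prime divisor of $Z'$ in $B_{Z'}$ equals that of its strict transform in $B_{Z''}$, so $B_{Z'}\le 1-\delta$, which finishes the argument.

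The main obstacle is entirely technical and lies in the two minimal model programs: I need to know that the generalized MMP over the graph $T$, and then over $Z'$, runs and terminates with good minimal models, and that the last one reaches the canonical model over $Z'$ using bigness of the boundary over $Z'$---this is exactly where the generalized-pair MMP theory of \cite{BZ16}, in particular \cite[Lemma 4.4]{BZ16}, is indispensable. Apart from carrying the nef part $\bM$ through every step, the discriminant comparison and the reduction to Conjecture~\ref{conj:gSM} are formally identical to the case $\bM=\mathbf{0}$ treated in Proposition~\ref{prop:cbfsing}; the only other point deserving care is that the moduli b-divisor $\bM_\pi$ is unchanged as a b-divisor under the birational modifications relating $Z$, $Z'$, and $Z''$ (standard canonical bundle formula bookkeeping), which is what lets the inequality on $Z''$ be transported back to $Z'$.
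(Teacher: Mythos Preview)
Your proposal is correct and follows essentially the same approach as the paper: the paper explicitly declines to write out the proof, stating only that it uses ``very similar arguments as in Proposition~\ref{prop:cbfsing} and the tools developed in \cite{BZ16}, e.g.\ \cite[Lemma 4.4]{BZ16},'' which is precisely the strategy you carry out in detail. Your invocation of Proposition~\ref{pair to g-pair} to obtain Conjecture~\ref{conj:gSM} from Conjecture~\ref{conj:bddfiber}, the replacement of \cite{BCHM10} by \cite[Lemma 4.4]{BZ16} for the two MMPs, and the verbatim discriminant comparison all match the intended argument.
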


\begin{cor}
Conjecture \ref{conj:fano fibration} in dimension $d$ implies Conjecture \ref{conj:gSM} in dimension $d$.
\end{cor}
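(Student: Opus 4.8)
The plan is to chain together the two reduction results already established in this section. First, Proposition~\ref{further reduction} shows that Conjecture~\ref{conj:fano fibration} in dimension $d$ implies Conjecture~\ref{conj:bddfiber} in dimension $d$. Second, Proposition~\ref{pair to g-pair} shows that Conjecture~\ref{conj:bddfiber} in dimension $d$ implies Conjecture~\ref{conj:gSM} in dimension $d$ (passing through the intermediate Conjecture~\ref{conj:g-bddfiber}). Composing these two implications gives exactly the statement, and since both propositions preserve the dimension, the claim holds in dimension $d$.

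For orientation, recall the two mechanisms being composed. In Proposition~\ref{further reduction} one takes a log pair $(X,B)\to C$ as in Conjecture~\ref{conj:bddfiber}, $\Qq$-factorializes, fixes a component $F_{i_0}$ of a fiber, and runs MMP's over $C$ — first on $K_X+B+\eta\sum_{i\ne i_0}F_i$ to isolate $F_{i_0}$, then on $K_{X'}+(1+\eta')B'$ to make $-K$ big and nef, and then passes to the ample model — arriving at a Fano fibration to which Conjecture~\ref{conj:fano fibration} applies, with the multiplicity of $F_{i_0}$ unchanged. In Proposition~\ref{pair to g-pair} one handles the generalized pair by an analogous MMP argument, now invoking the termination results for g-pairs of \cite{BZ16} (e.g.\ \cite[Lemma~4.4]{BZ16}): after isolating a single fiber component and making the nef part $\bM_X$ semi-ample over $C$, one twists by the pullback of a very ample divisor on $C$ and replaces the nef part by a general member of the corresponding $\Qq$-linear system, landing in the ordinary (non-generalized) setting of Conjecture~\ref{conj:bddfiber}; the passage from Conjecture~\ref{conj:g-bddfiber} to Conjecture~\ref{conj:gSM} then follows the canonical-bundle-formula argument of Proposition~\ref{1.2 to 1.1}, adapted to g-pairs.

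There is no genuine obstacle here: the corollary is purely formal once the two preceding propositions are in hand, exactly as the corollary stating that Conjecture~\ref{conj:fano fibration} implies Conjecture~\ref{conj:SM} follows from combining Propositions~\ref{1.2 to 1.1} and~\ref{further reduction}. Thus the proof is simply: combine Proposition~\ref{further reduction} with Proposition~\ref{pair to g-pair}.
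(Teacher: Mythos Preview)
Your proof is correct and matches the paper's approach exactly: the paper's proof consists of the single sentence ``The proof is a combination of Propositions \ref{pair to g-pair} and \ref{further reduction},'' which is precisely the chain of implications you describe.
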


\begin{proof}
The proof is a combination of Propositions \ref{pair to g-pair} and \ref{further reduction}.
\end{proof}

\begin{cor}\label{reldim1}
Conjecture \ref{conj:gSM} holds in dimension 2. More generally, Conjecture \ref{conj:gSM} holds for relative dimension one fibrations, and the g-pair $(Z,B_Z+\bM_\pi)$ is $\delta$-lc.
\end{cor}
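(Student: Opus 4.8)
The plan is to deduce the corollary from the reduction results of this section, using that all of them preserve the \emph{relative dimension} of the fibration; this brings everything down to the two-dimensional case of Conjecture \ref{conj:fano fibration}. First note that the two assertions coincide: in Conjecture \ref{conj:gSM} with $\dim X=2$, either $\dim Z=2$, so $\pi$ is crepant birational, $(Z,B_Z+\bM_\pi)$ is crepant to $(X,B+\bM)$, and the conclusion is immediate (take $\delta\le\epsilon$), or $\dim Z=1$, which is the relative dimension one case over a curve. So let $(X,B+\bM)$ and $\pi\colon X\to Z$ be as in Conjecture \ref{conj:gSM} with $\dim X-\dim Z=1$. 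Cutting $Z$ by general hyperplanes through the generic point of a prescribed prime divisor, as in the g-pair version of the base reduction of Proposition \ref{1.2 to 1.1} used inside Proposition \ref{pair to g-pair} (via \cite[Lemma 3.2]{Bir16}), replaces $Z$ by a general curve $C$ and $X$ by $\pi^{-1}(C)$; this preserves the relative dimension, hence forces $\dim X=2$, and worsens the log discrepancy bound only to a positive constant depending on $d$ and $\epsilon$. Then Propositions \ref{pair to g-pair} and \ref{further reduction} reduce the problem to Conjecture \ref{conj:fano fibration}, and as the $\Qq$-factorializations and relative MMPs in their proofs take place over $C$ (each step being an isomorphism over a dense open subset of $C$, since the divisor run on is relatively nef on the general fibre) they keep $\dim X=2$ and the relative dimension equal to one. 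Hence only the case $\dim X=2$ of Conjecture \ref{conj:fano fibration} is needed.

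To settle Conjecture \ref{conj:fano fibration} when $\dim X=2$: since $\epsilon>0$, $X$ is klt, hence has quotient singularities, and by the boundedness of $\epsilon$-lc surface singularities their Cartier indices are bounded by an integer $I=I(\epsilon)$, so $IK_X$ is Cartier. A general fibre $F$ of $\pi$ avoids $\Sing(X)$, is a Cartier divisor with $F^2=0$, and adjunction in the smooth locus gives $K_X\cdot F=2p_a(F)-2$; since $-K_X|_F$ is ample this forces $p_a(F)=0$ and $K_X\cdot F=-2$. For a closed point $z\in C$ one has $\pi^*z\equiv F$, whence $K_X\cdot F_z=-2/m_z$; then $-IK_X\cdot F_z=2I/m_z$ is a positive integer, so $m_z\le 2I$. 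This proves Conjecture \ref{conj:fano fibration}, indeed Conjecture \ref{conj:bddfiber}, in dimension $2$ with $m=2I(\epsilon)$.

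Finally, to obtain the sharper statement that $(Z,B_Z+\bM_\pi)$ is $\delta$-lc, and not merely $B_Z\le 1-\delta$, I would rerun the proofs of Propositions \ref{prop:cbfsing} and \ref{g-delta-lc} without change: they use only relative MMPs over $Z$ and over its birational models, all of relative dimension one, and only ever apply Conjecture \ref{conj:gSM} (the bound $B_Z\le 1-\delta$) to relative dimension one fibrations, which is exactly what the previous two paragraphs provide. The main obstacle is not any isolated estimate but the bookkeeping of the first step: one must check that no step of the numerous relative MMPs, nor the $\Qq$-factorializations, nor the cutting of $Z$, secretly increases the relative dimension or $\dim X$ beyond $2$ once the base has been cut to a curve, and that the log discrepancy stays uniformly bounded below throughout; the one genuinely new ingredient is the elementary intersection computation of the second paragraph, which rests solely on the boundedness of Cartier indices of $\epsilon$-lc surface singularities.
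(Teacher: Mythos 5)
Your argument is correct, and its skeleton is the same as the paper's: establish the multiplicity bound for surface Fano fibrations over curves, feed it into the Section~3 machinery (Propositions \ref{further reduction}, \ref{pair to g-pair}, \ref{g-delta-lc}) to get both the dimension-two statement and the $\delta$-lc conclusion, and handle general relative dimension one by cutting the base to a curve. The difference is one of self-containedness: the paper simply cites \cite{Bir16} for Conjecture \ref{conj:bddfiber} in dimension $2$ and defers the ``more generally'' part to the argument of \cite[Corollary 1.7]{Bir16}, whereas you reprove the surface input from scratch. Your proof of Conjecture \ref{conj:fano fibration} in dimension $2$ --- bounded Cartier index $I(\epsilon)$ for $\epsilon$-lc surface singularities, $K_X\cdot F=-2$ for the general fibre, $IK_X\cdot F_z=-2I/m_z\in\Zz$, hence $m_z\le 2I$ --- is sound and is a nice illustration of why the reduction to Conjecture \ref{conj:fano fibration} is useful. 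Your care in the last paragraph is also warranted and correctly resolved: since Proposition \ref{g-delta-lc} as stated assumes Conjecture \ref{conj:bddfiber} in dimension $d=\dim X$, which is unknown for $d>2$, one must indeed observe that all auxiliary fibrations $X'\to Z''$ arising in the proofs of Propositions \ref{prop:cbfsing} and \ref{g-delta-lc} are birational modifications of $X\to Z$ and hence again of relative dimension one, so only the relative-dimension-one case of Conjecture \ref{conj:gSM} (supplied by the cutting argument) is ever invoked. The paper's proof buys brevity by citation; yours buys a transparent, elementary verification of the key surface estimate.
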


\begin{proof}
By \cite{Bir16},  Conjecture \ref{conj:bddfiber} holds in dimension 2. Thus by 
Proposition \ref{g-delta-lc}, Conjecture \ref{conj:gSM} holds in dimension 2. By the same argument as the proof of \cite[Corollary 1.7]{Bir16}, the ``More generally'' part holds true. 
\end{proof}

\section{On boundedness of RC Calabi-Yau varieties}\label{sec4}

\subsection{Conjectures on RC Calabi-Yau varieties}

\begin{conj}\label{conj:1gap}
Let $d$ be a positive integer. Then $1$ is not an accumulation point from below for the set of minimal log discrepancies of all normal projective Calabi-Yau rationally connected varieties of dimension $d$.
\end{conj}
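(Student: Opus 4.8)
The plan is to derive Conjecture \ref{conj:1gap} from Conjecture \ref{conj:cybdd} — equivalently, by Theorem \ref{thm2}, from Conjecture \ref{conj:bddfiber} — so the argument is conditional on those. Here a \emph{Calabi-Yau variety} means a normal projective klt variety $X$ with $K_X\sim_\Qq 0$, and $\mld(X):=\mld(X,0)$. Suppose, for contradiction, that $1$ is an accumulation point from below of the set in question: there is a sequence of rationally connected Calabi-Yau varieties $X_i$ of dimension $d$ with $\mld(X_i)<1$ and $\mld(X_i)\to 1$. For all $i\gg 0$ we have $\mld(X_i)>\tfrac12$, so $(X_i,0)$ is a $\tfrac12$-lc log pair with $K_{X_i}\sim_\Rr 0$ and $X_i$ rationally connected; hence by Conjecture \ref{conj:cybdd} applied with $\epsilon=\tfrac12$, these $X_i$ are bounded up to flops. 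Thus there are normal projective varieties $X_i'$ lying in a single bounded family together with small birational maps $\phi_i\colon X_i\dashrightarrow X_i'$. Each $\phi_i$ is an isomorphism in codimension one and, both sides having $\Qq$-trivial canonical class, it is crepant (by the negativity lemma applied on a common resolution: $q^*K_{X_i'}-p^*K_{X_i}$ is $p$-exceptional, $\Qq$-trivial, and pushes down to zero); therefore $a(E,X_i)=a(E,X_i')$ for every prime divisor $E$ over $X_i$, and in particular $\mld(X_i')=\mld(X_i)$. It now suffices to contradict the existence of a bounded family of klt varieties whose members have mld arbitrarily close to, but strictly less than, $1$.

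For this I would use two standard inputs. First, in any bounded family $h\colon\mathcal X\to S$ with $S$ of finite type there is a positive integer $r$ with $rK_{\mathcal X_s}$ Cartier for every closed point $s$; this follows by Noetherian induction on $S$, using that for each $m$ the locus where $mK_{\mathcal X_s}$ is Cartier is constructible. Second, for a klt variety $Y$ and any log resolution $\mu\colon\widetilde Y\to Y$, writing $K_{\widetilde Y}=\mu^*K_Y+\sum_E(a(E,Y)-1)E$ with the sum over the $\mu$-exceptional prime divisors, one has $\mld(Y)=\min_E a(E,Y)$ whenever this minimum is $<1$; that is, a sub-one mld is always realized by a divisor appearing on the resolution. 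Combining these: after stratifying the base of the bounded family containing the $X_i'$, we may choose a simultaneous log resolution $\mu_i\colon\widetilde{X_i'}\to X_i'$ whose exceptional divisors lie in a fixed finite set, and with $rK_{X_i'}$ Cartier for an $r$ independent of $i$. Multiplying $K_{\widetilde{X_i'}}=\mu_i^*K_{X_i'}+\sum_E(a(E,X_i')-1)E$ by $r$ shows $a(E,X_i')\in\tfrac1r\Zz$ for every such $E$. Since $\mld(X_i')<1$, the second input then forces $\mld(X_i')\le 1-\tfrac1r$ for all $i\gg 0$, contradicting $\mld(X_i')\to 1$. This proves the conjecture under the stated assumptions.

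I expect the substance of the argument to be concentrated entirely in the appeal to Conjecture \ref{conj:cybdd}; the remaining steps — boundedness of the Gorenstein index in a bounded family, the fact that a mld below $1$ is computed on a log resolution, and crepancy of small birational maps between varieties with $\Qq$-trivial canonical class (note that $K\sim_\Qq 0$ rules out flips, so only flops occur) — are standard. The one place where a little care is needed is that ``bounded up to flops'', i.e.\ bounded in codimension one, is precisely what both lets us replace $X_i$ by $X_i'$ without changing the mld and packages the $X_i'$ into one honest bounded family. One could alternatively replace the Gorenstein-index bound in the last step by a uniform $N$ with $NK_{X_i'}\sim 0$, obtained from Lemma \ref{lem:bddrcbpfindex} together with rational connectedness, but this is unnecessary since only the denominators of the discrepancies matter.
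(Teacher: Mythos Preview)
Your argument is correct, but be aware that the paper does not prove Conjecture~\ref{conj:1gap}: it is left open, and the paper's only treatment of it is Proposition~\ref{equivalence}, where Conjectures~\ref{conj:1gap}, \ref{conj:rccyindex}, and \ref{conj:bddrccy} are shown to be mutually equivalent. The implication \ref{conj:rccyindex}~$\Rightarrow$~\ref{conj:1gap} is recorded there as ``clear'' --- a uniform $n$ with $nK_X\sim0$ makes $nK_X$ Cartier, so every log discrepancy lies in $\tfrac1n\Zz$ and any mld below $1$ is at most $1-\tfrac1n$.

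Your route differs in its conditional hypothesis: you assume Conjecture~\ref{conj:cybdd} rather than Conjecture~\ref{conj:rccyindex}. The substantive extra step is to use the contradiction assumption $\mld(X_i)\to1^{-}$ to force the $X_i$ into the $\tfrac12$-lc range, so that Conjecture~\ref{conj:cybdd} applies with a single $\epsilon=\tfrac12$; after that, boundedness in codimension one yields a uniform Cartier index for $K$, and your endgame coincides with the paper's ``clear'' implication. What your approach buys is the chain Conjecture~\ref{conj:cybdd}~$\Rightarrow$~Conjecture~\ref{conj:1gap} (hence, via Theorem~\ref{thm2} and Proposition~\ref{equivalence}, also Conjecture~\ref{conj:bddfiber}~$\Rightarrow$~Conjectures~\ref{conj:rccyindex} and~\ref{conj:bddrccy}), an implication the paper does not make explicit. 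Conversely, the paper's route from Conjecture~\ref{conj:rccyindex} is shorter once that hypothesis is granted, as it bypasses both the flop-invariance of mld and the passage from boundedness to a uniform Cartier index.
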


\begin{conj}\label{conj:rccyindex}
Let $d$ be a positive integer. Then there exists a positive integer $n$ depending only on $d$ such that for any $d$-dimensional klt Calabi-Yau rationally connected projective variety $X$, $nK_X\sim0$.
\end{conj}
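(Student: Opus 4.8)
The plan is to derive the statement from the boundedness Conjecture~\ref{conj:cybdd} --- which, by Theorem~\ref{thm2}, is implied by Conjecture~\ref{conj:bddfiber} --- in two stages: first reduce to a fixed bounded family of Calabi--Yau varieties, and then exploit rational connectedness to upgrade numerical triviality of the canonical class to linear triviality with a uniform multiple. The only input beyond Conjecture~\ref{conj:cybdd} is a uniform lower bound on the singularities of the varieties in question, and this is where the real difficulty lies.

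Suppose first that one knows a $d$-dimensional klt Calabi--Yau rationally connected variety is $\epsilon$-lc for some fixed $\epsilon=\epsilon(d)>0$. Then, taking $B=0$, Conjecture~\ref{conj:cybdd} applies and such an $X$ is isomorphic in codimension one to a (normal) member $X'$ of a fixed bounded family; since an isomorphism in codimension one preserves the canonical class, all discrepancies, and rational connectedness, $X'$ is again a $d$-dimensional klt Calabi--Yau rationally connected variety. Boundedness of the family yields a uniform integer $n_1=n_1(d)$ with $n_1K_{X'}$ Cartier, and since $X'$ is Calabi--Yau one has $K_{X'}\sim_\Qq 0$, so $n_1K_{X'}\equiv 0$. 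As the $X'$ range over a bounded family of rationally connected varieties, Lemma~\ref{lem:bddrcbpfindex} furnishes a uniform $n_2=n_2(d)$ with $n_2D\sim 0$ for every Cartier divisor $D\equiv 0$ on a member of the family; applied to $D=n_1K_{X'}$ this gives $n_1n_2K_{X'}\sim 0$. Finally, linear equivalence of Weil divisors is insensitive to the isomorphism in codimension one $X\dashrightarrow X'$, so $n_1n_2K_X\sim 0$, and one may take $n:=n_1n_2$.

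It remains to prove that a $d$-dimensional klt Calabi--Yau rationally connected variety $X$ is $\epsilon(d)$-lc. This is a strengthening of Conjecture~\ref{conj:1gap}, since it demands an actual lower bound on $\mld(X)$ rather than merely the absence of accumulation at $1$, and I expect it to be the main obstacle. I would argue by induction on $d$, using Conjecture~\ref{conj:bddfiber} in dimension $d$ together with all of its consequences obtained above, in particular the control of the singularities of the base of a Calabi--Yau fibration in Corollary~\ref{cor:basesing}. After running an MMP one may replace $X$ by a birational Calabi--Yau model fitting into a tower of Mori fiber spaces $X=X_0\to X_1\to\cdots\to X_k$ ending at a point, each $X_{i+1}$ being rationally connected and carrying, via the canonical bundle formulas, a klt log Calabi--Yau structure. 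If $\mld(X)$ could be arbitrarily small one would extract a divisorial valuation $E$ of tiny log discrepancy, run a further MMP to make $E$ a log canonical place of one of the fibrations in the tower, and then reach a contradiction either from the inductive hypothesis applied to the lower-dimensional base (combined with the canonical bundle formula and Corollary~\ref{cor:basesing}) or from the bound on multiplicities of fibers over a curve in Conjecture~\ref{conj:bddfiber}. The delicate point is keeping the boundary coefficients produced by the successive canonical bundle formulas under control while tracking the extracted valuation down the tower; this is precisely where one needs the full inductive force of Conjecture~\ref{conj:bddfiber}, and not merely any single one of its weaker corollaries in isolation.
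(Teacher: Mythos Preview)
This statement is presented in the paper as an open \emph{conjecture}; the paper does not prove it. What the paper does prove about it is Proposition~\ref{equivalence}, namely that Conjectures~\ref{conj:1gap}, \ref{conj:rccyindex} and~\ref{conj:bddrccy} are mutually equivalent, and none of these is derived from Conjecture~\ref{conj:bddfiber}. So there is no ``paper's own proof'' to compare with; you are attempting something the paper deliberately leaves open.

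Your first stage (assuming a uniform $\epsilon(d)$-lc bound and deducing a uniform index via boundedness modulo flops plus Lemma~\ref{lem:bddrcbpfindex}) is correct, and it is exactly the mechanism behind the implication \ref{conj:bddrccy}$\Rightarrow$\ref{conj:rccyindex} in Proposition~\ref{equivalence} and behind Proposition~\ref{prop:indexred}.

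The genuine gap is the second stage. The uniform $\epsilon(d)$-lc bound you need is not a known consequence of Conjecture~\ref{conj:bddfiber}: every result you cite (Conjecture~\ref{conj:cybdd}, Corollary~\ref{cor:basesing}, Conjecture~\ref{conj:bddfiber} itself) takes an $\epsilon$-lc hypothesis as \emph{input}, so none of them can be used to manufacture such a bound. Concretely, your sketch starts by ``running an MMP'' on $X$, but $K_X\equiv 0$ so there is nothing to run; one has to first extract a non-canonical divisor as in the proof of Proposition~\ref{equivalence}, and the resulting crepant pair $(Y,B_Y)$ satisfies $\mld(Y,B_Y)=\mld(X)$, which is exactly the quantity you are trying to bound. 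Every subsequent step you propose (canonical bundle formula on the base, induction, bounding fiber multiplicities) then requires the pair in hand to already be $\epsilon$-lc for a fixed $\epsilon$, so the argument is circular. Finally, your description of the $\epsilon$-lc bound as ``a strengthening of Conjecture~\ref{conj:1gap}'' is not right: Conjecture~\ref{conj:1gap} is about accumulation of $\mld$'s at $1$, while you need to exclude accumulation at $0$; in the paper's Proposition~\ref{equivalence} the $\epsilon$-lc bound is in fact \emph{deduced from} Conjecture~\ref{conj:1gap} (via global ACC), not the other way around, and this is precisely why the paper treats Conjecture~\ref{conj:rccyindex} as equivalent to Conjecture~\ref{conj:1gap} rather than as a consequence of Conjecture~\ref{conj:bddfiber}.
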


\begin{conj}\label{conj:bddrccy}
Let $d$ be a positive integer. Then all $d$-dimensional klt Calabi-Yau rationally connected projective varieties form a bounded family modulo flops.
\end{conj}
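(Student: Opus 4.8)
The plan is to deduce Conjecture \ref{conj:bddrccy} from Conjecture \ref{conj:bddfiber} (equivalently, by Proposition \ref{further reduction}, from Conjecture \ref{conj:fano fibration}), using in addition Conjecture \ref{conj:1gap} to supply uniform control of singularities; in low dimensions the hypotheses are available, so this also recovers Conjecture \ref{conj:bddrccy} there. I would argue by induction on $d$, the case $d=0$ being trivial, and since a small $\Qq$-factorialization alters $X$ only in codimension one I may assume $X$ is $\Qq$-factorial throughout.

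The first point is that a klt Calabi--Yau rationally connected variety $X$ is necessarily non-canonical: being rationally connected it is uniruled, so if $X$ were canonical then for a resolution $g\colon\widetilde X\to X$ we would get $K_{\widetilde X}=g^{*}K_X+(\text{effective exceptional})\sim_{\Qq}(\text{effective})$, contradicting uniruledness of $\widetilde X$. Hence $\mld(X)<1$, so by Conjecture \ref{conj:1gap} there is $\delta=\delta(d)>0$, independent of $X$, with $\mld(X)\le 1-\delta$. If $\mld(X)\ge\tfrac12$ then $X$ is $\tfrac12$-lc, and Conjecture \ref{conj:cybdd} --- which holds by Theorem \ref{thm2} --- gives that $X$ is bounded in codimension one; so I may assume $\mld(X)<\tfrac12$. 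Let $E$ be a prime divisor over $X$ computing $\mld(X)$, let $h\colon Y\to X$ be its divisorial extraction (which exists by \cite{BCHM10}), and put $c:=1-a(E,X,0)\in(\tfrac12,1)$. Then $Y$ is $\Qq$-factorial and rationally connected, $(Y,cE)$ is crepant to $(X,0)$ (so klt, with $\mld(Y,cE)=\mld(X)$), $K_Y+cE\sim_{\Qq}0$, and $-K_Y\equiv cE\not\equiv 0$, hence $K_Y$ is not pseudoeffective.

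I would then run a $K_Y$-MMP (equivalently an MMP on $K_Y+(c-\eta)E$ for $0<\eta\ll1$): since $K_Y$ is not pseudoeffective it contracts no step involving $E$ and terminates with a Mori fiber space $\pi'\colon Y'\to T$, with $\dim T<d$, $(Y',cE')$ crepant to $(X,0)$, and $cE'\equiv -K_{Y'}$ ample over $T$, so $E'\ne0$. Forming the canonical bundle formula $K_{Y'}+cE'\sim_{\Qq}\pi'^{*}(K_T+B_T+\bM_{\pi',T})$, the g-pair $(T,B_T+\bM_{\pi'})$ is klt Calabi--Yau, $T$ is rationally connected (a contraction image of $Y'$) of dimension $<d$, and its singularities are controlled by Corollary \ref{cor:basesing} / Proposition \ref{g-delta-lc}; the general fibre $F$ of $\pi'$ is a Fano (hence rationally connected) variety of dimension $<d$ with $(F,cE'|_F)$ klt Calabi--Yau and $E'|_F\ne0$. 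By the inductive hypothesis --- phrased for klt Calabi--Yau rationally connected varieties and g-pairs, carrying the singularity bound along --- $T$ is bounded in codimension one, and $\mld(X)=\mld(Y',cE')$ is bounded below in terms of $\mld(F,cE'|_F)$ and $\mld(T,B_T+\bM_{\pi'})$. Now Lemma \ref{lem:mfsbdd} shows $Y'$ is bounded in codimension one; Lemma \ref{lem:bircontbdd}, applied to the birational contraction $Y\dashrightarrow Y'$, shows $Y$ is bounded in codimension one; and since $h$ contracts the single divisor $E$, which appears as a component of the (bounded) boundary of $(Y,cE)$, contracting it exhibits $X$ as a member of a bounded family, hence bounded in codimension one; Lemma \ref{lem:birkarbddmodflop} can be used to package the tower once a point-target is reached. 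One should also record the easy converse implications: Conjecture \ref{conj:bddrccy} implies Conjecture \ref{conj:rccyindex} via bounded Gorenstein index and Lemma \ref{lem:bddrcbpfindex}, and implies Conjecture \ref{conj:1gap} via constructibility of minimal log discrepancies in bounded families.

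The main obstacle is the inductive propagation of the singularity bound. The base g-pair $(T,B_T+\bM_{\pi'})$ is only klt, and $\mld(T,B_T+\bM_{\pi'})$ can be arbitrarily small --- already in relative dimension one, owing to multiple fibres --- so it cannot be fed directly into an $\epsilon$-lc boundedness statement such as Conjecture \ref{conj:gcybdd} or Lemma \ref{lem:mfsbdd}. What makes the induction close is a uniform lower bound for the minimal log discrepancies of the klt Calabi--Yau rationally connected varieties (and g-pairs) that arise; extracting this is the genuine role of Conjecture \ref{conj:1gap} here, and is the only step where the argument is conditional in high dimension. Everything else reduces to the machinery of Section \ref{sec3} and the preliminary lemmas.
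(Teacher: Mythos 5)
The statement under review is a conjecture; what the paper actually proves about it is Proposition \ref{equivalence}, and in particular the implication $\text{Conjecture \ref{conj:1gap}}\Rightarrow\text{Conjecture \ref{conj:bddrccy}}$, which is \emph{unconditional}. Your argument instead assumes Conjecture \ref{conj:bddfiber} throughout: you route everything through Theorem \ref{thm2}, Corollary \ref{cor:basesing}, Proposition \ref{g-delta-lc} and Lemma \ref{lem:mfsbdd}, all of which are conditional on Conjecture \ref{conj:bddfiber}. So even if your argument were complete, it would establish strictly less than Proposition \ref{equivalence}. The paper avoids the Section \ref{sec3} machinery entirely: it applies the tower decomposition of \cite[Theorem 3.2]{DCS16} to the crepant model $(Y,B_Y)$, invokes Birkar's unconditional log-boundedness \cite[Theorem 1.4]{Bir18} in the form of Lemma \ref{lem:birkarbddmodflop} (this is precisely where the coefficient bound $B_Y\ge\delta$ supplied by Conjecture \ref{conj:1gap} is used), runs induction on $d$ for the klt Calabi--Yau base of the tower, and descends back to $X$ via Lemma \ref{lem:descendbdd}.

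Beyond this, there is a genuine gap. You correctly isolate the ``main obstacle'' --- a uniform lower bound on minimal log discrepancies --- but you attribute its resolution to the wrong statement: Conjecture \ref{conj:1gap} gives only the \emph{upper} bound $\mld(X)\le 1-\delta$ for non-canonical $X$ (a gap below $1$); it says nothing about mlds accumulating at $0$. Hence in your branch $\mld(X)<\tfrac12$, the crepant pair $(Y,cE)$ satisfies $\mld(Y,cE)=\mld(X)$ with no uniform positive lower bound, and none of Corollary \ref{cor:basesing}, Proposition \ref{g-delta-lc} or Lemma \ref{lem:mfsbdd} applies, since each requires a fixed $\epsilon$; your induction therefore does not close. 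The missing ingredient is the global ACC of \cite{HMX14}, exactly as in the paper's proof: $c=1-a(E,X,0)$ is a coefficient of the $d$-dimensional klt numerically trivial log Calabi--Yau pair $(Y,cE)$, so $c\le 1-\epsilon(d)$, i.e.\ $\mld(X)=a(E,X,0)\ge\epsilon(d)$; with this in hand your case division is superfluous. Two further points: the asserted lower bound for $\mld(Y',cE')$ ``in terms of'' the mlds of the fibre and of the base g-pair is unjustified --- quantitative comparison of total-space and base singularities is essentially the content of Conjecture \ref{conj:SM}, not something one may assume --- and the final descent from $Y$ to $X$, when $Y$ is only bounded modulo flops and $B_Y$ is not controlled, is precisely the content of Lemma \ref{lem:descendbdd2} (resting on Lemma \ref{lem:extdiv} and rational connectedness), which must be invoked rather than asserted.
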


\begin{prop}\label{equivalence}
The above three conjectures are equivalent.
\end{prop}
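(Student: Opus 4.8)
The plan is to prove the cycle of implications
\[
\text{Conj.}\ \ref{conj:1gap}\ \Longrightarrow\ \text{Conj.}\ \ref{conj:rccyindex}\ \Longrightarrow\ \text{Conj.}\ \ref{conj:bddrccy}\ \Longrightarrow\ \text{Conj.}\ \ref{conj:1gap},
\]
which gives the equivalence. Throughout, ``Calabi-Yau'' for a klt projective variety $X$ means $K_X\sim_\Rr 0$ (equivalently $\sim_\Qq 0$ in the klt rationally connected setting, since irregularity vanishes), and one works with $\epsilon$-lc for a uniform $\epsilon$ coming from the gap in Conjecture \ref{conj:1gap}.

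\textbf{Conj.\ \ref{conj:1gap} $\Rightarrow$ Conj.\ \ref{conj:rccyindex}.} Suppose $1$ is not an accumulation point from below of minimal log discrepancies of $d$-dimensional RC projective Calabi-Yau varieties; then there is $\epsilon=\epsilon(d)>0$ such that every such $X$ is in fact $\epsilon$-lc (the mld is either $\ge 1$, forcing $X$ smooth with trivial canonical, or lies in $[\epsilon,1)$). So the class of $d$-dimensional klt RC Calabi-Yau varieties coincides with the class of $\epsilon$-lc RC Calabi-Yau varieties of dimension $d$. By the Jordan-type / effective birationality results for $\epsilon$-lc Calabi-Yau pairs — more precisely, since $-K_X$ is trivial hence ``bounded'', one applies the results of Birkar on boundedness of $\epsilon$-lc Fano type / Calabi-Yau varieties together with the effective nonvanishing for the index — one gets that such $X$ form a bounded family modulo flops. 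Then Lemma \ref{lem:bddrcbpfindex} (applied after passing to a bounded representative and transporting back via the flop, which preserves linear equivalence classes of Weil divisors up to the fixed integer) yields a uniform $n=n(d)$ with $nK_X\sim 0$. The key input here is that an mld gap upgrades directly to a uniform $\epsilon$-lc hypothesis, after which existing boundedness theorems apply.

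\textbf{Conj.\ \ref{conj:rccyindex} $\Rightarrow$ Conj.\ \ref{conj:bddrccy}.} Given a uniform index $n$ with $nK_X\sim 0$, one takes an $n$-complement: there is a klt pair $(X,\Delta)$ with $K_X+\Delta\sim_\Qq 0$ and $\Delta\neq 0$ — indeed, letting $\Delta=\tfrac1n D$ for a general $D\in|{-nK_X}|$ (nonempty since $nK_X\sim 0$ gives $|{-nK_X}|\ni 0$; one instead uses $|{-nK_X}+\text{ample}|$ after a small perturbation, or simply notes $X$ is RC hence Fano type in a weak sense is not automatic — the cleaner route is: fix a very general member of a base-point-free-ish system). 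The honest mechanism is Birkar's theorem that klt Calabi-Yau varieties with bounded Cartier index of $K_X$ that are rationally connected form a bounded family modulo flops; boundedness of the index is exactly Conjecture \ref{conj:rccyindex}, and rational connectedness guarantees $h^1(\Oo_X)=0$ so the numerical and linear triviality coincide. Concretely, $nK_X\sim 0$ means $X$ is a klt Calabi-Yau with index dividing $n$; such varieties are bounded modulo flops by the boundedness results quoted in the preliminaries (in the style of \cite{Bir18}), because one can run the argument on a $\Qq$-factorial terminalization and invoke the Fano-fibration structure.

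\textbf{Conj.\ \ref{conj:bddrccy} $\Rightarrow$ Conj.\ \ref{conj:1gap}.} If all $d$-dimensional klt RC Calabi-Yau varieties form a bounded family modulo flops, fix a bounding family $h:\cX\to S$. Passing to a stratification and log resolution as in the proof of Lemma \ref{lem:bddrcbpfindex}, the minimal log discrepancy of $\cX_s$ is computed by finitely many divisors on the (relative) log resolution, hence is a lower semicontinuous constructible function of $s\in S$; it therefore takes only finitely many values. Since mld's are preserved by flops, the set of mld's over all $X$ in the conjecture is this same finite set of values, so it has no accumulation point at all, in particular not at $1$ from below. This direction is soft once boundedness modulo flops is in hand, using only that mld is constructible in bounded families and a flop-invariant.

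\textbf{Main obstacle.} The delicate step is the first implication, Conj.\ \ref{conj:1gap} $\Rightarrow$ Conj.\ \ref{conj:rccyindex}: one must turn a uniform $\epsilon$-lc bound into boundedness modulo flops for RC Calabi-Yau varieties, i.e.\ verify that the relevant boundedness machinery (Birkar-type theorems on $\epsilon$-lc Calabi-Yau / Fano-type varieties) applies in this exact setting, and then extract a uniform torsion index via Lemma \ref{lem:bddrcbpfindex}; one should be careful that ``bounded modulo flops'' suffices for the index statement, which it does because flops induce isomorphisms in codimension one and hence identifications of class groups compatible with the fixed $n$. The other two implications are comparatively routine.
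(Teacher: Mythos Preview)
Your argument for the hard direction has two genuine problems. First, you misread Conjecture~\ref{conj:1gap}: ``$1$ is not an accumulation point from below'' means there is $\eta>0$ with $\mld(X)\notin(1-\eta,1)$, which (since rationally connected klt Calabi--Yau varieties are non-canonical, cf.\ \cite[Lemma~6.3]{Jiang21}) gives the \emph{upper} bound $\mld(X)\le 1-\eta$, not the lower bound $\mld(X)\ge\epsilon$ you assert. Second, and fatally, you then invoke ``results of Birkar on boundedness of $\epsilon$-lc Calabi--Yau varieties'' to pass from $\epsilon$-lc to bounded modulo flops. There is no such theorem; that statement is exactly Conjecture~\ref{conj:bddrccy}, so the step is circular. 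Birkar's actual boundedness result \cite[Theorem~1.4]{Bir18} requires a nonzero boundary with coefficients bounded below by a fixed $\delta>0$, and the pair $(X,0)$ has no boundary at all.

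The paper proves $\ref{conj:1gap}\Rightarrow\ref{conj:bddrccy}$ directly, and the missing idea is precisely how to manufacture such a boundary. One extracts the divisor $E$ computing $\mld(X)$ to obtain a crepant $\Qq$-factorial model $(Y,B_Y)$ with $B_Y=(1-\mld(X))E$; Conjecture~\ref{conj:1gap} then yields $\coeff(B_Y)=1-\mld(X)\ge\eta$. This coefficient lower bound (together with $\epsilon$-lc-ness of $(Y,B_Y)$ from global ACC \cite{HMX14}) is what feeds into \cite[Theorem~1.4]{Bir18} after running $(Y,B_Y)$ through a tower of Mori fiber spaces and inducting on the dimension of the base via Lemma~\ref{lem:birkarbddmodflop}; one then descends boundedness from $(Y,B_Y)$ back to $X$ via Lemma~\ref{lem:descendbdd}. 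Your other implications are fine; the paper likewise cites \cite[Corollary~4.3]{BDCS20} for $\ref{conj:rccyindex}\Leftrightarrow\ref{conj:bddrccy}$ and notes that $\ref{conj:rccyindex}\Rightarrow\ref{conj:1gap}$ is immediate.
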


\begin{proof}
The equivalence between Conjecture \ref{conj:rccyindex} and Conjecture \ref{conj:bddrccy} is obtained by \cite[Corollary 4.3]{BDCS20}. The implication 
$4.2\Rightarrow 4.1$ is clear. The rest is devoted to the implication $4.1\Rightarrow 4.3$.

First observe that  $X$ is non-canonical (e.g. \cite[Lemma 6.3]{Jiang21}), that is there exists a prime divisor $E$ over $X$ such that $a(E,X,0) < 1$. Then we can take a crepant model $(Y,B_Y)$ of $X$ that extracts only $E$ and $Y$ is $\Qq$-factorial. By Conjecture \ref{conj:1gap}, $B_Y\ge\delta$ for some positve real number $\delta$ which only depends on $d$. By global ACC (e.g. \cite{HMX14}), $(Y,B_Y)$ is $\epsilon$-lc for some $0<\epsilon<\frac{1}{2}$ depending only on $d$.

We claim that $(Y,B_Y)$ is log bounded in codimension one. In fact, by \cite[Theorem 3.2]{DCS16}, there exists a birational contraction $\psi: Y \dashrightarrow Y'$ to a $\Qq$-factorial log Calabi-Yau pair $(Y', B_{Y'}:=\psi_{*} B_Y),$ $B_{Y'}\neq 0$ and a tower of morphisms
$$Y'=:X_{0} \stackrel{}{\longrightarrow} X_{1} \stackrel{}{\longrightarrow} X_{2} \stackrel{}{\longrightarrow} \ldots \stackrel{}{\longrightarrow} X_{k}$$
such that
\begin{itemize}
  %\item there exists a boundary $B_{i} \neq 0$ on $X_{i}$ and $(X_{i}, B_{i})$ is a $\Qq$-factorial klt log Calabi-Yau pair for any $1 \leq i<k$ ,
  \item the morphism $X_{i} \rightarrow X_{i+1}$ is a Mori fibre space for any $0 \leq i<k$, and
  \item either $\operatorname{dim} X_{k}=0$, or $\operatorname{dim} X_{k}>0$ and $X_{k}$ is $\Qq$-factorial klt Calabi-Yau.
\end{itemize}
If $\dim X_k=0$, then $Y'$ is bounded according to \cite[Theorem 1.4]{Bir18}. It implies that $Y$ is bounded in codimension one by Lemma \ref{lem:bircontbdd} and thus $(Y,B_Y)$ is log bounded in codimension one as $B_Y\ge\delta$. If $\dim X_k>0$ and $X_k$ is $\Qq$-factorial klt Calabi-Yau, then by induction on dimension, $X_k$ is bounded modulo flops. Applying Lemma \ref{lem:birkarbddmodflop}, $(Y,B_Y)$ is log bounded modulo flops. Therefore the claim holds.

We conclude that $X$ is also bounded in codimension one by the next Lemma \ref{lem:descendbdd}.
\end{proof}

The following lemma is useful in order to show that if a set of log Calabi-Yau pairs is bounded in codimension one, then the set of certain birational models of them remains bounded in codimension one. The proof is well-known to experts (cf. \cite[Theorem 5.1]{CDCHJS21}). 

\begin{lem}\label{lem:descendbdd}
Let $\epsilon$ be a positive real number. Let $\mathcal{P}$ be a set of projective $\epsilon$-lc log Calabi-Yau pairs $(Y, B_Y)$ such that $\Supp B_Y$ is a prime divisor. Denote $\mathcal{P}'$ to be the set consisting of Calabi-Yau projective varieties $X$ such that there exists a birational morphism $h: Y \rightarrow X$, and $h^{*}K_{X}=K_{Y}+B_Y$, and $(Y, B_Y) \in \mathcal{P}$. Suppose that $\mathcal{P}$ is log bounded modulo flops. Then $\mathcal{P}'$ is bounded modulo flops.
\end{lem}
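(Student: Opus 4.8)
The plan is to exploit the structure of the boundedness datum for $\mathcal{P}$ to simultaneously resolve the family and locate the divisor $B_Y$, then run a relative MMP in the family to produce a bounded-modulo-flops model of $X$. First I would unwind the hypothesis: since $\mathcal{P}$ is log bounded modulo flops, there is a quasi-projective scheme $\cX$, a reduced divisor $\cB$ on $\cX$, and a projective morphism $\cX\to S$ with $S$ of finite type, such that each $(Y,B_Y)\in\mathcal{P}$ is, after an isomorphism in codimension one $f\colon\cX_s\to Y$, matched up so that $\Supp\cB_s=\Supp f_*^{-1}B_Y$. After stratifying $S$ and passing to a finite cover I may assume $\cX\to S$ together with $\cB$ is "nice" fiberwise (log resolution in the family, strata log smooth over $S$), so that over each stratum $\cB\to S$ has irreducible fibers matching the prime divisor $\Supp B_Y$, and the coefficient of $B_Y$ along it is constant on connected components by the $\epsilon$-lc log Calabi-Yau condition together with global ACC (or simply because it is determined by the numerical/crepant data). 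Here the birational model $\cX_s$ of $Y$ differs from $Y$ only by a small birational map, so the divisor $B_Y$ and its coefficient transport unambiguously.

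Next I would produce, in the family, the contraction $Y\to X$. The key point is that $X$ is recovered from $(Y,B_Y)$ as the ample model (equivalently, the canonical model) of $K_Y$: indeed $h\colon Y\to X$ with $h^*K_X=K_Y+B_Y$, $B_Y\ge 0$, so $X=\operatorname{Proj}\bigoplus_m H^0(Y,\lfloor mK_Y\rfloor)$ and the contracted locus is exactly $\Supp B_Y$. Working over a fixed stratum $S_\alpha$, I would run a $K_{\cX/S_\alpha}$-MMP (or, to stay inside log Calabi-Yau pairs, an MMP for $K_{\cX/S_\alpha}+(1-t)\cB$ scaled appropriately) with scaling of an ample divisor; by Noetherian induction on $S_\alpha$ and the standard argument that MMP can be run in bounded families after further stratification (cf. \cite[Theorem 5.1]{CDCHJS21}), this MMP can be performed fiberwise, and on the general fiber it reproduces the small modification of $Y\to X$. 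The output is a family $\cX'\to S_\alpha$ whose fibers are isomorphic in codimension one to the varieties $X$ arising from $(Y,B_Y)\in\mathcal P$ with $s\in S_\alpha$; since $S$ has finitely many strata, $\mathcal P'$ is covered by finitely many such families, hence bounded modulo flops.

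The main obstacle is the fiberwise MMP step: ensuring that a single MMP (after finite stratification) works simultaneously for all fibers, and that on the generic fiber it indeed factors the given map $Y\dashrightarrow X$ rather than some other birational model. This is exactly the content of the "MMP in bounded families" technology, and the honest way to handle it is to invoke \cite[Theorem 5.1]{CDCHJS21} (and the results it builds on) after the preliminary log-resolution-in-the-family reduction; the remaining points — constancy of the coefficient of $B_Y$, the identification of $X$ with the ample model of $K_Y$, and the transport of small modifications — are routine. One should also note that "bounded modulo flops" for $\mathcal P$ only gives $\cX_s$ isomorphic to $Y$ in codimension one, so throughout one works with $\cX_s$ in place of $Y$ and checks that $K_{\cX_s}$ and its ample model are insensitive to small modifications, which they are since $K$ is a canonical (hence birationally invariant up to codimension one) object.
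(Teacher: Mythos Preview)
Your central claim is false: you assert that $X$ is the ample (canonical) model of $K_Y$, i.e.\ $X=\operatorname{Proj}\bigoplus_m H^0(Y,\lfloor mK_Y\rfloor)$. But from $h^*K_X=K_Y+B_Y$ and $K_X\sim_{\Qq}0$ you get $K_Y\sim_{\Qq}-B_Y$, which is anti-effective since $B_Y>0$. Hence $K_Y$ is not pseudo-effective, the canonical ring is trivial, and a global $K_Y$-MMP terminates at a Mori fiber space, not at $X$. Your fallback of running an MMP on $K_{\cX/S_\alpha}+(1-t)\cB$ is closer to something that could work (since $K_Y+(1-t)E\sim_{\Qq}(1-t-c)E$ with $c\le 1-\epsilon$, so for $t<\epsilon$ this is effective and supported on $E$), but you neither fix $t$, nor explain why the family MMP contracts exactly $\cB$ fiberwise, nor address that $\cX_s$ is only a small model of $Y$ and may be singular in ways that obstruct running the MMP directly on $\cX$.

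The paper handles all of this by first passing to a log resolution $g\colon\cW\to\cY$ of the family and running the MMP on $K_{\cW}+(1-\tfrac{\epsilon}{2})\cB_\cW$ over $S$, where $\cB_\cW$ is the \emph{reduced} sum of the strict transform of $\cB$ and all $g$-exceptional divisors. On the fiber over $s$ one computes
\[
\bigl(K_{\cW}+(1-\tfrac{\epsilon}{2})\cB_\cW\bigr)\big|_{\cW_s}\ \sim_{\Qq}\ (1-\tfrac{\epsilon}{2})\cB_{\cW_s}-B_s,
\]
where $B_s$ is the crepant pullback of $(\cY_s,f_*^{-1}B_Y)$. The right-hand side is effective with support exactly equal to $\cB_{\cW_s}$, which is precisely the set of prime divisors on $\cW_s$ exceptional over $X$; in particular $X$ is a good minimal model of the fiber, so by \cite{HMX18} the relative MMP runs and terminates, contracting exactly $\cB_\cW$, and the resulting fiber $\cW'_s$ is isomorphic to $X$ in codimension one. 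The choice of the uniform coefficient $1-\tfrac{\epsilon}{2}$ (using the $\epsilon$-lc hypothesis) is what makes this work simultaneously across the family without needing to know the actual coefficient of $B_Y$.
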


\begin{proof}
By assumption $\mathcal{P}$ is bounded modulo flops, there is a quasi-projective scheme $\cY$, a prime divisor $\cB$ that does not contain any fiber of the projective morphism $\cY \to S$, where $S$ is a disjoint union of finitely many normal varieties such that for every $(Y, B_Y) \in \mathcal{P}$, there is a closed point $s \in S$ and a small birational contraction $f: \cY_{s} \dashrightarrow Y$ such that $\cB_{s}:=\cB|_{\cY_s}=f_{*}^{-1}B_Y$. We may assume that the set of those points $s$ is dense in $S$. After decomposing $S$, we may also assume that $S$ is smooth.

Fix a point $s\in S$ corresponding to some $(Y, B_Y)$,
$$K_{\cY_{s}}+f_{*}^{-1} B_Y \sim_\bQ f_{*}^{-1}(K_{Y}+B_Y) \sim_\Qq 0$$
and therefore $(\cY_{s},f_{*}^{-1} B_Y)$ is an $\epsilon$-lc log Calabi-Yau pair. Now consider a log resolution $g: \cW \rightarrow \cY$ of $(\cY, \mathcal{B})$ and denote by $\cB_\cW$ the reduced sum of strict transform of $\mathcal{B}$ and all $g$-exceptional divisors on $\cW$. Consider the $\frac{\epsilon}{2}$-lc pair $(\cW,(1-\frac{\epsilon}{2}) \cB_\cW)$. 
%There exists an open dense set $U \subset S$ such that for the point $s \in U$ corresponding to $(Y, B_Y),$ 
Up to shrinking $S$, we can suppose that $g_{s'}: \cW_{s'} \rightarrow \cY_{s'}$ is a log resolution for any $s'\in S$. Write
$$K_{\cW_s}+B_{s}=g_{s}^{*}(K_{\cY_{s}}+ f_{*}^{-1} B_Y) \sim_\Qq 0$$
where $B_{s}\leq 1-\epsilon$ and $\Supp B_s\subseteq\cB_{\cW_s}:=\cB_\cW|_{\cW_s}$. We have
$$(K_{\cW}+(1-\frac{\epsilon}{2}) \cB_\cW)|_{\cW_s} =
 K_{\cW_s}+(1-\frac{\epsilon}{2}) \cB_{\cW_s}\sim_\Qq (1-\frac{\epsilon}{2}) \cB_{\cW_s}-B_{s} .$$
Note that the support of $(1-\frac{\epsilon}{2}) \cB_{\cW_s}-B_{s}$ coincides with the support of $\cB_{\cW_s}$ which is precisely the divisors on $\cW_s$ exceptional over $X$. Hence $(K_{\cW}+(1-\frac{\epsilon}{2}) \cB_\cW)|_{\cW_s}$ has a good minimal model $X$ and thus we can run a $(K_{\cW}+(1-\frac{\epsilon}{2})\cB_\cW)$-MMP with scaling of an ample divisor over $S$ to get a relative good minimal model $\cW'$ over $S$ by \cite[Theorem 1.2]{HMX18}. Note that $\cB_{\cW_s}$ is contracted and hence $\cW'_{s}$ is isomorphic to $X$ in codimension one. Moreover, up to shrinking $S$, this MMP contracts $\cB_\cW$ only, and $\cW'_{s'}$ is a good minimal model of $(\cW_{s'},\cB_\cW|_{\cW_{s'}})$ by \cite[Lemma 6.1]{HMX18}. It implies that $\cW'_{s'}$ is isomorphic in codimension one to $X'$ which is descend from $(Y',B_{Y'})$, where $(Y',B_{Y'})$ is the pair corresponding to $s'$ for any $s'\in S$. Applying Noetherian induction, the lemma holds.
\end{proof}

\subsection{Proof of Theorem \ref{thm2}}
\begin{prop}\label{1.2 to 1.3}
Conjecture \ref{conj:bddfiber} in dimension d implies Conjecture \ref{conj:cybdd} in dimension d.
\end{prop}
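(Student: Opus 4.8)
The plan is to run an induction on $d$. First note that Conjecture \ref{conj:bddfiber} in dimension $d$ implies it in every dimension $\le d$ (multiply a lower-dimensional example by a fixed klt Fano pair of Fano type), so by Theorem \ref{thm1} and Corollary \ref{cor:basesing} we have at our disposal the following structural input: for every contraction $(V,B_V)\to W$ with $\dim V\le d$, $(V,B_V)$ an $\epsilon$-lc log Calabi--Yau pair and $V$ of Fano type over $W$, the base carries a boundary $\Delta_W$ with $(W,\Delta_W)$ a $\tfrac{\delta}{2}$-lc log Calabi--Yau pair, where $\delta=\delta(d,\epsilon)>0$; the existence of $\Delta_W$ with $K_W+\Delta_W\sim_\Rr 0$ uses that the moduli b-divisor of the canonical bundle formula is b-nef and b-abundant (Proposition \ref{prop:cbfabundant}) together with Lemma \ref{lem:gpairtopair}. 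The base case $d=1$ is immediate since $\Pp^1$ is the only rationally connected curve.

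For the inductive step, let $X$ be as in Conjecture \ref{conj:cybdd}, with a boundary $B$ so that $(X,B)$ is $\epsilon$-lc and $K_X+B\sim_\Rr 0$; after a small $\Qq$-factorialization (an isomorphism in codimension one, hence harmless modulo flops) we may assume $X$ is $\Qq$-factorial. Since $X$ is rationally connected it is uniruled, so $K_X$ is not pseudo-effective and in particular $B\ne 0$. Running a $K_X$-MMP with scaling of an ample divisor (legitimate because $(X,0)$ is klt) terminates with a birational contraction $\phi\colon X\dashrightarrow X''$ to a Mori fibre space $\pi\colon X''\to Z$, and $\dim Z<\dim X''=d$. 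Put $B'':=\phi_*B$. From $K_{X''}+B''=\phi_*(K_X+B)\sim_\Rr 0$ and the negativity lemma one checks that $(X,B)$ and $(X'',B'')$ are crepant, so $(X'',B'')$ is again an $\epsilon$-lc log Calabi--Yau pair; moreover $B''\ne 0$ (otherwise $K_{X''}\sim_\Rr 0$, contradicting that the uniruled variety $X''$ has non-pseudo-effective canonical class), $-K_{X''}$ is $\pi$-ample so $X''$ is of Fano type over $Z$, and $Z$, being dominated by the rationally connected $X$, is itself rationally connected.

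If $\dim Z=0$ then $Z$ is a point, trivially bounded. If $\dim Z>0$, applying the structural input to $\pi$ gives a boundary $\Delta_Z$ making $(Z,\Delta_Z)$ a $\tfrac{\delta}{2}$-lc log Calabi--Yau pair, with $Z$ rationally connected of dimension $<d$; the inductive hypothesis (Conjecture \ref{conj:cybdd} in dimension $\dim Z$, applied with the constant $\tfrac{\delta}{2}$) then shows $Z$ is bounded modulo flops. In either case $Z$ is bounded in codimension one, so Lemma \ref{lem:mfsbdd} applied to the Mori fibre space $X''\to Z$ (both $\Qq$-factorial) shows $X''$ is bounded in codimension one, and finally Lemma \ref{lem:bircontbdd} applied to the birational contraction $\phi$ and to the $\epsilon$-lc log Calabi--Yau pair $(X,B)$ shows $X$ is bounded in codimension one, i.e.\ bounded modulo flops.

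The step I expect to be the main obstacle is the uniformity bookkeeping: one must ensure that $(X'',B'')$ remains $\epsilon$-lc and that the base pair $(Z,\Delta_Z)$ is $\tfrac{\delta}{2}$-lc with $\delta$ depending only on $d$ and $\epsilon$, so that the inductive hypothesis genuinely applies. The Calabi--Yau hypothesis $K_X+B\sim_\Rr 0$ is what forces crepancy of $(X,B)$ and $(X'',B'')$ under the MMP, and the b-abundance of the moduli part (Proposition \ref{prop:cbfabundant}) is what allows passing from the g-pair base to an honest $\tfrac{\delta}{2}$-lc pair via Lemma \ref{lem:gpairtopair}; these are the points needing care. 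An equivalent route, closer to \cite{DCS16}, is to replace the $K_X$-MMP by the tower of Mori fibre spaces produced there and to use only its first step.
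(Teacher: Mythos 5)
Your treatment of the case $B\ne 0$ matches the paper's: run a $K_X$-MMP to a Mori fibre space, put a $\tfrac{\delta}{2}$-lc boundary on the base via Corollary \ref{cor:basesing}, induct on dimension, and descend boundedness through Lemmas \ref{lem:mfsbdd} and \ref{lem:bircontbdd}. The gap is your dismissal of the case $B=0$. You assert that rational connectedness of $X$ forces $K_X$ to be non-pseudo-effective, hence $B\ne 0$. This is false for singular varieties: BDPP concerns the canonical class of a \emph{smooth} model, and there do exist klt rationally connected projective varieties with $K_X\sim_{\Qq}0$ (for instance rational surfaces arising as quotients of abelian surfaces by finite groups acting with fixed points; in general this is exactly the class of varieties in Conjecture \ref{conj:bddrccy}). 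For such $X$ one can only conclude that $X$ is non-canonical (\cite[Lemma 6.3]{Jiang21}), since a terminalization would contradict uniruledness of a resolution. The same slip recurs when you argue $B''\ne 0$ from uniruledness of $X''$; there the correct reason is simply that $-K_{X''}\equiv B''$ is ample over $Z$.

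The case $B=0$ is precisely the hard case, and the paper devotes most of Section 4 to it: one extracts a single divisor $E$ with $a(E,X,0)<1$ to obtain a $\Qq$-factorial crepant model $(Y,B_Y)$ with $B_Y\ne0$, applies the first case to conclude that $Y$ is bounded modulo flops, and then must descend boundedness from $Y$ back to $X$. That last descent is not formal, because the family bounding $Y$ carries no control on $B_Y$; it is handled by Lemma \ref{lem:descendbdd2}, which extends the boundary over the bounding family (via Lemma \ref{lem:extdiv}, surjectivity of restriction on Picard groups for rationally connected fibres) and runs a relative MMP together with invariance of plurigenera. Without an argument of this kind your proof does not cover klt rationally connected Calabi--Yau varieties, which are the essential content of Conjecture \ref{conj:cybdd}.
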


\begin{proof}
To prove Conjecture \ref{conj:cybdd} in dimension $d$, we assume Conjecture \ref{conj:cybdd} holds up to dimension $d-1$.

Let $(X, B)$ be a pair in Conjecture \ref{conj:cybdd}. We first assume  $B\ne 0$. Up to small modification, we may assume that $X$ is $\bQ$-factorial.  In this case, we could run an MMP on $K_X$ to get a Mori fiber space $\pi': (X', B')\to Z$, where $B'$ is the push-forward of $B$ via $X\dashrightarrow X'$. By Corollary \ref{cor:basesing},  there exists a boundary $\Delta_Z$ on $Z$ such that $(Z, \Delta_Z)$ is an RC Calabi-Yau variety with $\delta$-lc singularities, where $\delta$ depends only on $d$ and $\epsilon$. Since Conjecture \ref{conj:cybdd} holds up to dimension up to $d-1$, we see that $Z$ is bounded in codimension one. Thus $X'$ is bounded in codimension one by Lemma \ref{lem:mfsbdd}.  By Lemma \ref{lem:bircontbdd}, $X$ is also bounded in codimension one.

We now turn to the case when $B=0$. In this case, $X$ is non-canonical, and there exists a prime divisor $E$ over $X$ such that $a(E,X,0) < 1$. Then we can take a crepant model $(Y,B_Y)$ of $X$ that extracts only $E$ and $Y$ is $\Qq$-factorial. By what we have proved, we see that $Y$ is bounded in codimension one. We aim to show that $X$ is also bounded in codimension one. Now the situation is a little different with the setting in Lemma \ref{lem:descendbdd}, as $(Y, B_Y)$ is log bounded up to flops there but we only have boundedness modulo flops for $Y$ here. That is to say, we cannot control $B_Y$ in our situation. 
This difficulty is overcome by  Lemma \ref{lem:descendbdd2}.
\end{proof}

Notice that in the following lemma, we only assume that $\mathcal{P}$ is bounded modulo flops rather than log bounded modulo flops.

\begin{lem}\label{lem:descendbdd2}
Let $\epsilon$ be a positive real number. Let $\mathcal{P}$ be a set of projective $\epsilon$-lc RC log Calabi-Yau pairs $(Y, B_Y)$ such that $\Supp B_Y$ is a prime divisor. Denote $\mathcal{P}'$ to be the set consisting of Calabi-Yau projective varieties $X$ such that there exists a birational morphism $h: Y \rightarrow X$, and $h^{*}K_{X}=K_{Y}+B_Y$, and $(Y, B_Y) \in \mathcal{P}$. Suppose that $\mathcal{P}$ is bounded modulo flops. Then $\mathcal{P}'$ is also bounded modulo flops.
\end{lem}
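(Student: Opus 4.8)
The plan is to adapt the argument of Lemma \ref{lem:descendbdd}, but since we only assume $\mathcal{P}$ is bounded modulo flops (not \emph{log} bounded modulo flops), we have no control over where the divisor $B_Y$ sits inside the bounding family, so we must first \emph{create} such control by recording the valuation $E$ rather than the divisor $B_Y$. First I would let $\cY\to S$ with $S$ smooth be a bounding family for $\mathcal{P}$ in codimension one, so that for each $(Y,B_Y)\in\mathcal{P}$ there is a closed point $s\in S$ and a small birational contraction $f\colon\cY_s\dashrightarrow Y$, with the set of such points dense in $S$. The key point is that $X$ is obtained from $Y$ by contracting the single prime divisor $B_Y$, equivalently $X$ is the ample model (or canonical model) of $K_Y=K_Y+0$ over a point; since $Y$ and $\cY_s$ differ only by a flop, $X$ is also the ample model of a crepant log structure on $\cY_s$. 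More precisely, since $a(E,X,0)<1$ is the unique such divisor and $f$ is small, $E$ is a divisor over $\cY_s$ with $a(E,\cY_s,f_*^{-1}B_Y)=a(E,\cY_s,(\text{strict transform}))$, and $\cY_s\dashrightarrow Y\to X$ exhibits $X$ as the unique model on which $E$ is contracted and $K$ is nef.

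Next I would pass to a common log resolution $g\colon\cW\to\cY$ of $(\cY,\cB)$ where $\cB$ is the closure of the divisors $B_Y$ (so $\cB$ may fail to be a divisor dominating $S$, but its strict transform on generic fibers is), and more importantly I would include in $\cB_\cW$ the strict transform of $\cB$ together with all $g$-exceptional divisors; then consider the pair $(\cW,(1-\tfrac{\epsilon}{2})\cB_\cW)$, which is $\tfrac{\epsilon}{2}$-lc. Over the dense set of good points $s$ we have, exactly as in Lemma \ref{lem:descendbdd},
$$K_{\cW_s}+B_s=g_s^*(K_{\cY_s}+f_*^{-1}B_Y)\sim_\Qq 0,\qquad \Supp B_s\subseteq\cB_{\cW_s},$$
so $(K_\cW+(1-\tfrac{\epsilon}{2})\cB_\cW)|_{\cW_s}\sim_\Qq(1-\tfrac{\epsilon}{2})\cB_{\cW_s}-B_s\ge 0$ and its support is exactly the divisors on $\cW_s$ exceptional over $X$. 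Hence $(\cW_s,(1-\tfrac{\epsilon}{2})\cB_{\cW_s})$ has $X$ as a good minimal model, and by \cite[Theorem 1.2]{HMX18} I can run a $(K_\cW+(1-\tfrac{\epsilon}{2})\cB_\cW)$-MMP with scaling over $S$ to obtain a relative good minimal model $\cW'/S$; by \cite[Lemma 6.1]{HMX18}, up to shrinking $S$ this MMP contracts exactly $\cB_\cW$ and $\cW'_{s'}$ is a good minimal model fiberwise over \emph{all} $s'\in S$. Since $\cB_{\cW_{s'}}$ is contracted, $\cW'_{s'}$ is isomorphic in codimension one to the variety $X'$ descending from the pair corresponding to $s'$; thus every member of $\mathcal{P}'$ associated to a point of this stratum is isomorphic in codimension one to a fiber of $\cW'\to S$. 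Finally I would apply Noetherian induction on $S$ to cover all of $\mathcal{P}'$, concluding that $\mathcal{P}'$ is bounded modulo flops.

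The main obstacle, and the place where this differs from Lemma \ref{lem:descendbdd}, is the first step: since $\cB$ need not be a genuine divisor in the family (we lost the ``log'' in log boundedness), one must argue that the divisor $E$ contracted by $Y\to X$ still appears among the $g$-exceptional divisors of a fixed resolution $\cW\to\cY$ in a uniform way, and that the contraction $\cW_s\dashrightarrow X$ really is a step of the relative MMP run over $S$. The delicate verification is that running the MMP on $K_\cW+(1-\tfrac{\epsilon}{2})\cB_\cW$ over $S$ contracts precisely the right divisors fiberwise for \emph{every} $s'\in S$ and not just the dense good locus — this is where \cite[Lemma 6.1]{HMX18} and shrinking $S$ (together with Noetherian induction) are essential, exactly as in the proof of Lemma \ref{lem:descendbdd}. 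Everything else is a routine transcription of that earlier argument.
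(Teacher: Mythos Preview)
Your proposal has a genuine gap at precisely the point you flag as ``the main obstacle,'' and the resolution you sketch does not work. You define $\cB$ as ``the closure of the divisors $B_Y$,'' but the divisors $f_*^{-1}B_Y\subseteq\cY_s$ for varying $s$ are unrelated prime divisors sitting in different fibers; there is no reason their union is (or is contained in) a divisor on $\cY$ that restricts well to fibers, so the pair $(\cW,(1-\tfrac{\epsilon}{2})\cB_\cW)$ you want to run an MMP on is not actually defined. Your fallback suggestion, that the contracted divisor $E=\Supp B_Y$ ``appears among the $g$-exceptional divisors of a fixed resolution $\cW\to\cY$,'' is impossible: $f$ is small, so $f_*^{-1}B_Y$ is an honest prime divisor on $\cY_s$, and the restriction to $\cW_s$ of any $g$-exceptional divisor is $g_s$-exceptional over $\cY_s$ and hence can never equal the strict transform of $f_*^{-1}B_Y$. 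Consequently the support computation $\Supp\big((1-\tfrac{\epsilon}{2})\cB_{\cW_s}-B_s\big)=\{\text{divisors exceptional over }X\}$ fails, and the MMP you run will not contract the correct divisors.

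The missing idea is the one hypothesis of the lemma you never use: rational connectedness. The paper fixes a single $(Y,B_Y)$ corresponding to $s\in S$, passes to a fiberwise resolution $\cW\to\cY$, and then invokes the extension-of-divisors result Lemma \ref{lem:extdiv} (which needs the fibers to be RC) to produce, after shrinking $S$ and an \'etale base change, a $\Qq$-divisor $\cB_\cW$ on $\cW$ with $\cB_\cW|_{\cW_s}=$ strict transform of $B_Y$. With $\cB_\cW$ in hand, the paper runs an MMP on $K_\cW+\cB_\cW+(1-\tfrac{\epsilon}{2})\cE$ over $S$ (here $\cE$ is the reduced exceptional divisor) and uses invariance of plurigenera to show that $(\cY_{s'},\cB_{s'})$ is $\tfrac{\epsilon}{2}$-lc log Calabi--Yau for every $s'$; this upgrades boundedness modulo flops to \emph{log} boundedness modulo flops, after which Lemma \ref{lem:descendbdd} applies verbatim and Noetherian induction finishes the argument. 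In short, your outline becomes correct once you insert the RC extension step to manufacture $\cB$; without it the construction does not get off the ground.
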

\begin{proof}
By \cite[Proposition 9.2]{Bir23}, there exists a positive real number $\alpha$ such that for any $(Y, B_Y)\in \mathcal{P}$, the coefficient of $B_Y$ is not less than $\alpha$. Reminding that $\{Y| (Y, B_Y)\in \mathcal{P}\}$ is bounded, we thus see that $\{B_Y| (Y, B_Y)\in \mathcal{P}\}$ is bounded. This implies that the set $\mathcal{P}$ is log bounded and we are done by Lemma \ref{lem:descendbdd}.
\end{proof}

\subsection{Proof of Theorem \ref{thm3}}

\begin{prop}\label{1.2 to 1.4}
Conjecture \ref{conj:bddfiber} in dimension $d$ implies Conjecture \ref{conj:gcybdd} in dimension $d$.
\end{prop}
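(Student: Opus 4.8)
The plan is to follow the same strategy as in Propositions \ref{1.2 to 1.3} and \ref{pair to g-pair}, reducing the generalized Calabi-Yau boundedness statement to the ordinary one via Lemma \ref{lem:gpairtopair}, and then invoking the results already established. More precisely, suppose $X$ is a rationally connected projective variety of dimension $d$ such that $(X,B+\bM)$ is an $\epsilon$-lc g-pair with $K_X+B+\bM_X\sim_\Rr 0$. The first step is to reduce to the case where $\bM$ is b-abundant (indeed b-nef and b-abundant): replacing $X$ by a $\Qq$-factorialization, run an MMP to reach a birational model on which $\bM$ becomes semi-ample relative to a suitable contraction. In fact, the cleanest route is to first produce a Mori fiber space structure. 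Since $K_X\not\sim_\Rr 0$ unless $B+\bM_X\equiv 0$, running an MMP on $K_X$ over a point yields (after passing to a birational model isomorphic in codimension one) a Mori fiber space $X'\to Z$ with $\dim Z<d$, together with the pushed-forward g-pair $(X',B'+\bM)$ which is still $\epsilon$-lc log Calabi-Yau.

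The second step is to apply the canonical bundle formula for g-pairs over $Z$: by Proposition \ref{g-delta-lc} (the generalized version of Proposition \ref{prop:cbfsing}), the g-pair $(Z,B_Z+\bM_\pi)$ is $\delta$-lc for some $\delta$ depending only on $d$ and $\epsilon$. Here one must check that the moduli b-divisor $\bM_\pi$ is b-abundant; this follows from the abundance properties of the moduli part (combining Proposition \ref{prop:cbfabundant} with a reduction from g-pairs to pairs using Lemma \ref{lem:gpairtopair} applied to the general fiber, or directly from the literature on the g-pair canonical bundle formula). Then, applying Lemma \ref{lem:gpairtopair} to $(Z,B_Z+\bM_\pi)$, we obtain an honest boundary $\Delta_Z$ with $(Z,\Delta_Z)$ being $\frac{\delta}{2}$-lc and $K_Z+\Delta_Z\sim_\Rr 0$; moreover $Z$ is rationally connected since it is dominated by the rationally connected $X'$. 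Thus $(Z,\Delta_Z)$ falls under Conjecture \ref{conj:cybdd} in dimension $<d$, which by Theorem \ref{thm2} (i.e. Proposition \ref{1.2 to 1.3}, using induction on dimension) holds under the hypothesis that Conjecture \ref{conj:bddfiber} holds in dimension $d$. Hence $Z$ is bounded in codimension one.

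The third step is to propagate boundedness back up: since $X'\to Z$ is a Mori fiber space, $(X',B'+\bM)$ is $\epsilon$-lc log Calabi-Yau, and $Z$ is bounded in codimension one, Lemma \ref{lem:mfsbdd} gives that $X'$ is bounded in codimension one. Then Lemma \ref{lem:bircontbdd} (applied to the birational contraction $X\dashrightarrow X'$, using that $(X,B+\bM)$ is $\epsilon$-lc log Calabi-Yau) shows $X$ is bounded in codimension one, which is exactly the conclusion of Conjecture \ref{conj:gcybdd}. It remains to handle the degenerate case $B+\bM_X\equiv 0$: then $\bM_X\equiv 0$ and $B\equiv 0$, so $(X,0)$ is an $\epsilon$-lc Calabi-Yau variety; being non-canonical (cf. \cite[Lemma 6.3]{Jiang21}), one extracts a divisor $E$ with $a(E,X,0)<1$, producing a $\Qq$-factorial crepant model $(Y,B_Y)$ with $B_Y\neq 0$ to which the previous argument applies, and then descends boundedness from $Y$ to $X$ via Lemma \ref{lem:descendbdd2}.

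The main obstacle I expect is the verification that the moduli b-divisor $\bM_\pi$ of the g-pair canonical bundle formula is b-abundant, so that Lemma \ref{lem:gpairtopair} can be applied to convert $(Z,B_Z+\bM_\pi)$ into an honest klt Calabi-Yau pair $(Z,\Delta_Z)$. For ordinary pairs this is Proposition \ref{prop:cbfabundant} (Ambro's theorem), but in the generalized setting the moduli part is built from the moduli part of the fibers' own canonical bundle formula plus the restriction of $\bM$, and one needs an abundance input for the new moduli contribution; this is where one invokes the available results on the canonical bundle formula for g-pairs (e.g. \cite{Fil20}) together with a descent argument. A secondary technical point is ensuring the MMPs used (to reach the Mori fiber space, and to make $\bM$ semi-ample over the base) terminate and produce good minimal models — for this one uses the termination results for g-pairs from \cite{BZ16}, e.g. \cite[Lemma 4.4]{BZ16}, exactly as in Proposition \ref{pair to g-pair}.
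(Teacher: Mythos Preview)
Your overall architecture matches the paper: reduce to $\Qq$-factorial, split into the cases $B+\bM_X\not\equiv 0$ and $B+\bM_X\equiv 0$, in the first case run a $K_X$-MMP to a Mori fiber space $X'\to Z$, apply Proposition \ref{g-delta-lc} to get a $\delta$-lc g-pair $(Z,B_Z+\bM_{\pi'})$ on the base, conclude boundedness of $Z$ in lower dimension, and then climb back up via Lemmas \ref{lem:mfsbdd} and \ref{lem:bircontbdd}. The degenerate case $B+\bM_X\equiv 0$ is handled in the paper simply by invoking Theorem \ref{thm2} (since then $K_X\equiv 0$, and on a rationally connected variety this forces $K_X\sim_\Qq 0$, so $(X,0)$ falls under Conjecture \ref{conj:cybdd}); your more explicit extraction argument is just unpacking that reference.

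The genuine gap in your proposal is exactly the obstacle you flag yourself: you want to apply Lemma \ref{lem:gpairtopair} to $(Z,B_Z+\bM_\pi)$ in order to land in Conjecture \ref{conj:cybdd}, and for that you need $\bM_\pi$ to be b-abundant. Proposition \ref{prop:cbfabundant} (Ambro) gives this only when the total space carries an ordinary klt pair, i.e.\ when $\bM=\mathbf{0}$. For a genuine g-pair upstairs the b-abundance of the moduli part of the canonical bundle formula is not supplied by the results cited in the paper (\cite{Fil20} gives b-nefness, not b-abundance), and your suggested workaround of ``applying Lemma \ref{lem:gpairtopair} to the general fiber'' is circular, since that lemma already presupposes b-abundance of the nef part. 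So as written, the step converting $(Z,B_Z+\bM_\pi)$ into an honest $\frac{\delta}{2}$-lc pair $(Z,\Delta_Z)$ is unjustified.

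The paper sidesteps this entirely: instead of converting the g-pair on $Z$ to an ordinary pair and invoking Conjecture \ref{conj:cybdd}, it observes that $(Z,B_Z+\bM_{\pi'})$ is itself a rationally connected $\delta$-lc generalized Calabi-Yau g-pair of dimension $<d$, and applies \emph{induction on Conjecture \ref{conj:gcybdd}} in lower dimension. Since the statement being proved is precisely about g-pairs, the inductive hypothesis absorbs the moduli part without any abundance input. This is the key simplification you are missing; once you make the induction on Conjecture \ref{conj:gcybdd} rather than detouring through Conjecture \ref{conj:cybdd}, the b-abundance issue disappears and the proof becomes the short argument in the paper.
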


\begin{proof}

Possibly replacing $X$ by a small $\Qq$-factorialization, we may assume that $X$ is $\bQ$-factorial.  

We first assume $B+\bM_X\not\equiv0$. In this case, we could run an MMP on $K_X$ to get a Mori fiber space $\pi': (X', B'+\bM)\to Z$, where $B'$ is the push-forward of $B$ via $X\dashrightarrow X'$. By Proposition \ref{g-delta-lc},  there is a g-pair $(Z, \Delta_Z+\bM_{\pi'})$ which is $\delta$-lc Calabi-Yau and $Z$ is RC, and $\delta$ depends only on $d$ and $\epsilon$. By induction, we see that $Z$ is bounded in codimension one, thus $X'$ is bounded in codimension one by Lemma \ref{lem:mfsbdd}.  By Lemma \ref{lem:bircontbdd}, $X$ is also bounded in codimension one.

If $B+\bM_X\equiv0$, it follows from Theorem \ref{thm2}.
\end{proof}

\subsection{Conjecture \ref{conj:gcybdd} in dimension $\le 3$}

\begin{thm}\label{thm:gcybddforsurf}
Conjecture \ref{conj:gcybdd} holds in dimension $2$. Moreover, those $X$ are bounded.
\end{thm}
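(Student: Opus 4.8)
The plan is to reduce to the (non-generalized) surface case and then invoke known boundedness results for $\epsilon$-lc Calabi-Yau surfaces. First I would handle the g-pair structure: given a rationally connected surface $X$ with an $\epsilon$-lc g-pair structure $(X,B+\bM)$ such that $K_X+B+\bM_X\sim_\Rr 0$, I want to replace $\bM$ by an honest boundary. Since on a surface every nef b-divisor that descends is automatically b-abundant (nef divisors on curves are semiample, and after descending to a smooth model the trace is a nef $\Rr$-Cartier divisor which on a surface is abundant — $\kappa_\sigma=\kappa$ holds for surfaces by Zariski decomposition), Lemma \ref{lem:gpairtopair} applies: there is a boundary $\Delta$ with $(X,\Delta)$ being $\tfrac{\epsilon}{2}$-lc and $\Delta\sim_\Rr B+\bM_X$. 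Hence $K_X+\Delta\sim_\Rr 0$, and $X$ is still rationally connected of dimension $2$. So it suffices to bound the set of rationally connected projective surfaces $X$ carrying an $\tfrac{\epsilon}{2}$-lc Calabi-Yau pair structure $(X,\Delta)$.

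Next I would bound such pairs $(X,\Delta)$ directly. A rationally connected klt surface $X$ with $K_X+\Delta\sim_\Rr 0$ has $K_X$ not pseudo-effective (as $\Delta\geq 0$ is effective and $K_X\equiv-\Delta$), so $X$ is a (log) del Pezzo-type surface; running an MMP on $K_X$ yields either a Mori fiber space over $\mathbb{P}^1$ or a del Pezzo surface of Picard rank one. By the global ACC theorem (\cite{HMX14}) applied to the $\epsilon$-lc Calabi-Yau surface, the coefficients of $\Delta$ lie in a finite set; alternatively, since we assume $\epsilon$-lc (not merely klt), boundedness of $\epsilon$-lc log Calabi-Yau surfaces is already known — this is exactly Alexeev's boundedness of $\epsilon$-lc del Pezzo surfaces together with the Calabi-Yau condition, or can be extracted from \cite{Bir16} in relative dimension one plus the base being a curve or point. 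Concretely: if the output of the MMP is $X'\to Z$ with $\dim Z=1$, then $Z=\mathbb{P}^1$ and by Corollary \ref{cor:basesing}/\ref{reldim1} (which holds unconditionally in relative dimension one by \cite{Bir16}) together with Lemma \ref{lem:mfsbdd} we get $X'$ bounded, and then $X$ bounded in codimension one by Lemma \ref{lem:bircontbdd}; but on surfaces "bounded in codimension one" upgrades to "bounded" since small birational contractions between normal surfaces are isomorphisms (normal surface singularities have no small modifications). If the output is a rank-one del Pezzo surface, $\epsilon$-lc del Pezzo surfaces form a bounded family by Alexeev–Nikulin / \cite{Bir16}, and again we pull back boundedness along the MMP using that surface flips/divisorial contractions in the MMP preserve boundedness (the extracted/contracted divisors have log discrepancy $\leq 1$, so we control everything via Lemma \ref{lem:bircontbdd} and the surface-specific observation that codimension-one boundedness equals boundedness).

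The main obstacle, and the step I would spend the most care on, is the passage from "bounded in codimension one / modulo flops" back to honest "bounded" — i.e. justifying that for normal projective \emph{surfaces} the two notions coincide. This is true because a small birational map between $\Qq$-factorial normal surfaces is an isomorphism (there are no flopping curves in dimension two: a curve contracted by a small map would have to be contracted by a birational morphism to a normal surface, but such a contraction is divisorial), so every "flop" or "small modification" in the surface setting is trivial. I would state this as a short lemma and use it to convert all the codimension-one boundedness conclusions of Section \ref{sec3} and Lemmas \ref{lem:mfsbdd}, \ref{lem:bircontbdd} into genuine boundedness. A secondary technical point is ensuring the $\epsilon$ does not degrade uncontrollably through the MMP and the reduction $\bM\rightsquigarrow\Delta$ — but since we only lose a factor of $2$ in Lemma \ref{lem:gpairtopair} and MMP steps do not decrease log discrepancies, this is harmless and the final bound depends only on $d=2$ and $\epsilon$.
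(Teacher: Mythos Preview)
Your first step --- replacing $\bM$ by an honest boundary via Lemma~\ref{lem:gpairtopair} --- has a genuine gap: the assertion that a nef b-divisor on a surface is automatically b-abundant is false, and Zariski decomposition says nothing about $\kappa=\kappa_\sigma$ for nef divisors. For a concrete counterexample satisfying all the hypotheses of Conjecture~\ref{conj:gcybdd}, take $X$ to be $\Pp^2$ blown up at nine very general points and set $B=0$, $\bM=\overline{-K_X}$. Then $(X,0+\bM)$ is a $1$-lc g-pair with $K_X+\bM_X\sim 0$ and $X$ is rational, yet $-K_X$ is nef with $(-K_X)^2=0$, $\kappa(-K_X)=0$ and $\kappa_\sigma(-K_X)=\nu(-K_X)=1$, so $\bM$ is not b-abundant and Lemma~\ref{lem:gpairtopair} does not apply. (Your parenthetical about nef divisors on curves is irrelevant here; $\bM$ descends to a surface, not a curve.)

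The paper's proof avoids this issue by never converting $\bM$ into a boundary. Instead it observes, via \cite[Lemma~2.4]{HanLiu20a}, that on a surface the underlying pair $(X,B)$ is already klt, and then runs an MMP on $K_X+B+(1+\epsilon')\bM_X\sim_\Rr -\epsilon'(K_X+B)$ for small $\epsilon'>0$. Since this MMP is $(K_X+B+\bM_X)$-crepant, the output $(X',B')$ is still $\epsilon$-lc and now has $-(K_{X'}+B')$ nef; Alexeev \cite{Ale94} then bounds $X'$, and \cite[Theorem~2.2]{Bir18} pulls boundedness back to $X$. The essential difference is \emph{which} MMP to run: by moving in the $-(K_X+B)$-direction one lands directly in Alexeev's setting without needing abundance of $\bM$, without your case split on the Mori fibre space base, and without the separate treatment of the $\Delta=0$ case (which your outline also glosses over, since $K_X$ need not fail to be pseudo-effective there).
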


\begin{proof}
By \cite[Lemma 2.4]{HanLiu20a}, $(X,B)$ is klt and thus $X$ is $\Qq$-factorial. Then we can pick a positive real number $\epsilon'$ such that $(X,B+(1+\epsilon')\bM)$ is klt. We may run an MMP on $K_X+B+(1+\epsilon')\bM_X\sim_\Rr -\epsilon'(K_X+B)$ and reach a model $X'$ on which $-(K_{X'}+B')$ is nef, where $B'$ is the strict transform of $B$ on $X'$. According to \cite{Ale94}, $X'$ is bounded. From \cite[Theorem 2.2]{Bir18}, we see that $X$ is also bounded.
\end{proof}

\begin{thm}
Conjecture \ref{conj:gcybdd} holds in dimension $3$.
\end{thm}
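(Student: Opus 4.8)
The plan is to follow the strategy used in dimension $2$ (Theorem \ref{thm:gcybddforsurf}) combined with the known boundedness results in dimension $3$. First I would reduce to the absolute Calabi-Yau case: since $(X, B+\bM)$ is $\epsilon$-lc with $K_X + B + \bM_X \sim_\Rr 0$, one cannot in general convert the g-pair to an honest pair, so the argument must work with g-pairs throughout. I would first recall that by \cite[Theorem 1.6]{BDCS20}, the set of rationally connected projective varieties $X$ of dimension $3$ admitting an $\epsilon$-lc g-pair structure $(X, B+\bM)$ with $K_X + B + \bM_X \sim_\Rr 0$ is already known to be bounded in codimension one; the issue the present paper addresses is whether this can be upgraded, or re-derived, via the multiplicity route. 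So the cleanest route is: if $B + \bM_X \equiv 0$, apply the results on RC Calabi-Yau threefolds directly (Conjecture \ref{conj:bddrccy} is known in dimension $\le 3$ by Jiang and others, so these $X$ are bounded modulo flops); if $B + \bM_X \not\equiv 0$, run an MMP on $K_X$ to reach a Mori fiber space $\pi' : (X', B' + \bM) \to Z$ with $\dim Z \le 2$, use Corollary \ref{reldim1} (which gives that Conjecture \ref{conj:gSM} holds for relative dimension one fibrations, hence in particular when $\dim X = 3$ and $\dim Z \ge 1$) to produce a $\delta$-lc g-pair structure $(Z, \Delta_Z + \bM_{\pi'})$ on the RC base $Z$ with $\delta$ depending only on $\epsilon$, and then invoke boundedness of $\epsilon$-lc RC Calabi-Yau g-pairs in dimension $\le 2$ (dimension $1$: $Z = \bP^1$; dimension $2$: Theorem \ref{thm:gcybddforsurf}) to conclude $Z$ is bounded, hence bounded in codimension one.

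Concretely, the steps in order are as follows. \textbf{Step 1:} Replace $X$ by a small $\Qq$-factorialization, so $X$ is $\Qq$-factorial, and split into the two cases $B + \bM_X \equiv 0$ and $B + \bM_X \not\equiv 0$. \textbf{Step 2:} In the case $B + \bM_X \not\equiv 0$, run a $K_X$-MMP; since $X$ is of Fano type in the relevant sense (being a rationally connected variety carrying a Calabi-Yau g-pair structure with nontrivial boundary) the MMP terminates with a Mori fiber space $X' \to Z$, and $X'$ is still $\epsilon$-lc log Calabi-Yau as a g-pair. \textbf{Step 3:} Since $\dim X' = 3$ and the generic fiber of $X' \to Z$ is a Fano of dimension $\le 2$, the fibration has relative dimension $\le 2$; if $\dim Z = 2$ the relative dimension is one and Corollary \ref{reldim1} applies directly, while if $\dim Z = 1$ the base is a curve and the canonical bundle formula output $(Z, B_Z + \bM_\pi)$ is automatically $\delta$-lc for trivial reasons (or again via Corollary \ref{reldim1}); if $\dim Z = 0$ then $X'$ itself is a Fano-type threefold which is bounded by BAB (Birkar). \textbf{Step 4:} In all cases with $\dim Z \ge 1$, $Z$ is a rationally connected $\delta$-lc Calabi-Yau g-pair of dimension $\le 2$, so by Theorem \ref{thm:gcybddforsurf} (and the trivial curve case) $Z$ is bounded; then Lemma \ref{lem:mfsbdd} gives that $X'$ is bounded in codimension one, and Lemma \ref{lem:bircontbdd} gives that $X$ is bounded in codimension one. \textbf{Step 5:} In the case $B + \bM_X \equiv 0$, $X$ is a klt Calabi-Yau threefold which is rationally connected, and these form a bounded family modulo flops by the known solution of Conjecture \ref{conj:bddrccy} in dimension $3$ (equivalently, by \cite{Jiang21} together with \cite[Corollary 4.3]{BDCS20}), possibly after first extracting a non-canonical divisor and applying Lemma \ref{lem:descendbdd2} exactly as in the proof of Proposition \ref{1.2 to 1.3}.

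The main obstacle I anticipate is \textbf{Step 3}, namely verifying that the discriminant-plus-moduli g-pair on the base really is $\delta$-lc with $\delta$ depending only on $\epsilon$ when $\dim Z = 2$: this is precisely the content of the relative-dimension-one case of Conjecture \ref{conj:gSM}, which by Corollary \ref{reldim1} follows from the fact that Conjecture \ref{conj:bddfiber} is known in dimension $2$ (by \cite{Bir16}). One must be careful that the moduli b-divisor $\bM_{\pi'}$ is b-nef and b-abundant so that Theorem \ref{thm:gcybddforsurf} applies to $(Z, \Delta_Z + \bM_{\pi'})$ — here b-abundance is automatic since the generic fiber is a Fano, so $\kappa_\sigma = \kappa = 0$ after the canonical bundle formula in the pure-moduli direction, or one simply uses that in relative dimension one the moduli part is the pushforward of a nef divisor with controlled positivity. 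A secondary subtlety is the $B + \bM_X \equiv 0$ case: one should confirm that after the crepant extraction the resulting $(Y, B_Y)$ is RC $\epsilon$-lc log Calabi-Yau with $B_Y$ supported on a prime divisor, so that Lemma \ref{lem:descendbdd2} applies verbatim and descends boundedness modulo flops from $Y$ to $X$.
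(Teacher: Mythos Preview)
Your proposal is correct and follows essentially the same route as the paper: split on whether $B + \bM_X \equiv 0$; in the nontrivial case run a $K_X$-MMP to a Mori fiber space $X' \to Z$, bound the base using Corollary \ref{reldim1} when $\dim Z = 2$ and trivially when $\dim Z \le 1$, then apply Lemmas \ref{lem:mfsbdd} and \ref{lem:bircontbdd}; in the trivial case invoke \cite{Jiang21}. One small slip to clean up: when $\dim Z = 1$ the relative dimension is $2$, so Corollary \ref{reldim1} does \emph{not} apply and the $\delta$-lc claim for $(Z, B_Z + \bM_\pi)$ is neither trivial nor needed --- the paper simply observes that $Z$ is then a rational or elliptic curve, hence bounded; your worry about b-abundance of $\bM_{\pi'}$ is likewise unnecessary, since Theorem \ref{thm:gcybddforsurf} applies to arbitrary g-pairs without any abundance hypothesis on the nef part.
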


\begin{proof}
%It is clear that Conjecture \ref{conj:gcybdd} holds up to dimension 2. We only need to prove Conjecture \ref{conj:gcybdd} holds in dimension 3. 
If $B+\bM_X\equiv0$, then we are done by \cite[Theorem 1.6]{Jiang21}. If $B+\bM_X\not\equiv0$, then we could run an MMP on $K_X$ to get a Mori fiber space $\pi': (X', B'+\bM)\to Z$, where $B'$ is the push-forward of $B$ via $X\dashrightarrow X'$. We claim that $Z$ is bounded. Assume the claim holds, then $X$ is bounded in codimension one by Lemmas \ref{lem:bircontbdd} and \ref{lem:mfsbdd}. We only need to prove the claim. If $\dim Z=0$, there is nothing to prove. If $\dim Z=1$, then $Z$ is either a rational curve or an elliptic curve. If $\dim Z=2$, then there is a g-pair $(Z,B_Z+\bM_{\pi'})$ which is $\delta$-lc for some $\delta>0$ depending only on $d$ and $\epsilon$ by Corollary \ref{reldim1}. It follows that $Z$ is bounded by Theorem \ref{thm:gcybddforsurf}. This finishes the proof.
\end{proof}

%\begin{rem}The above theorem gives a different proof of \cite[Theorem 1.6]{BDCS20}.\end{rem}

\section{Further remark on Conjecture \ref{conj:rccyindex}}

This section is devoted to Conjecture \ref{conj:rccyindex}. Since g-pair structure appears naturally if we want to prove Conjecture \ref{conj:rccyindex} by induction on the dimension via canonical bundle formula, the following conjecture is also expected.

\begin{conj}\label{conj:indexforgpair}
Let $d, p$ be two positive integers and $\I \subseteq[0,1] \cap \Qq$ a DCC set. Then there exists a positive integer $n$ depending only on $d, p$ and $\I$ satisfying the following.

Assume that $(X, B+\bM)$ is a projective RC lc g-pair of dimension $d$ such that $B \in \I$ and $p \bM$ is b-Cartier, and $K_X+B+\bM_X\equiv0$. Then $n(K_{X}+B+\bM_{X}) \sim 0$. 
\end{conj}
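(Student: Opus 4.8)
The plan is to prove Conjecture \ref{conj:indexforgpair} by induction on the dimension $d$, using the canonical bundle formula to descend along a Mori fiber space and invoking known index results of Birkar--Di Cerbo--Svaldi / Jiang for the effective non-vanishing in low dimensions and for the boundedness inputs. First I would reduce to the case where $(X,B+\bM)$ is klt, which can be arranged since $X$ is rationally connected and hence not canonical (as in \cite[Lemma 6.3]{Jiang21}); passing to a $\bQ$-factorial crepant model $(Y,B_Y+\bM)$ extracting one divisor of log discrepancy $<1$ changes nothing about the index statement because $n(K_Y+B_Y+\bM_Y)\sim 0 \iff n(K_X+B+\bM_X)\sim 0$ by the negativity lemma, and by global ACC for g-pairs (e.g. the generalized version of \cite{HMX14}) we may assume $(Y,B_Y+\bM)$ is $\epsilon$-lc for some $\epsilon>0$ depending only on $d,p,\I$. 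If $B+\bM_X\equiv 0$ then $X$ is klt Calabi--Yau rationally connected and we invoke Conjecture \ref{conj:rccyindex} (known in dimension $\le 3$, and assumed in lower dimensions under the induction); otherwise $B+\bM_X\not\equiv 0$.

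Next, assuming $B+\bM_X\not\equiv 0$, run an MMP on $K_X$ (which is not pseudo-effective because $X$ is rationally connected and $K_X+B+\bM_X\equiv 0$ with $B+\bM_X\ne 0$ numerically) to reach a Mori fiber space $\pi:X'\to Z$ with $\dim Z<d$; here I would use $\bQ$-factoriality and termination for g-pairs as in the proofs of Propositions \ref{1.2 to 1.4} and \ref{pair to g-pair}, together with \cite[Lemma 4.4]{BZ16}. The strict transform $(X',B'+\bM)$ is still $\epsilon$-lc with $K_{X'}+B'+\bM_{X'}\equiv 0$, and the coefficients of $B'$ still lie in the DCC set $\I$ while $p\bM$ is still b-Cartier. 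Apply the canonical bundle formula to $\pi:X'\to Z$ to get a g-pair $(Z,B_Z+\bM_{\pi})$ with $K_{Z}+B_Z+\bM_{\pi,Z}\equiv 0$; by the boundedness of the fibers (general fiber is a Fano of dimension $\dim X'-\dim Z$ with bounded $\epsilon$-lc singularities) one controls both the coefficients of $B_Z$ — they lie in a DCC set $\I'$ depending only on $d,p,\I$ — and the b-Cartier index of $\bM_\pi$, say $p'\bM_\pi$ is b-Cartier with $p'$ depending only on $d,p,\I$; this uses the results on the canonical bundle formula and its coefficients in \cite{Bir19,Fil20,HanLiu20,JLX22} together with effective adjunction for Fano-type fibrations. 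Also $Z$ is rationally connected since $X'\to Z$ is a fibration from an RC variety. By the inductive hypothesis applied in dimension $\dim Z<d$, there exists $n'$ depending only on $d,p,\I$ with $n'(K_Z+B_Z+\bM_{\pi,Z})\sim 0$.

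To upgrade this from $Z$ back to $X$, I would combine two ingredients. First, the boundedness of $X'$ modulo flops: by Lemmas \ref{lem:mfsbdd} and \ref{lem:bircontbdd} together with the boundedness modulo flops of $Z$ (which follows from $(Z,B_Z+\bM_\pi)$ being an $\epsilon$-lc g-pair with $K_Z+B_Z+\bM_{\pi,Z}\equiv 0$ and $Z$ being RC — known in low dimension, assumed under induction), $X'$ and hence $X$ is bounded in codimension one, so Lemma \ref{lem:bddrcbpfindex} gives a fixed integer $n_0$ such that numerically trivial Cartier divisors on $X$ become trivial after multiplying by $n_0$. Second, I would show $K_X+B+\bM_X$ is $\bQ$-linearly trivial with controlled index: after clearing denominators, $q(K_X+B+\bM_X)$ is an integral Weil divisor numerically trivial, and pulling back $K_Z+B_Z+\bM_{\pi,Z}$ via $\pi$ shows that a bounded multiple is linearly trivial over $Z$, while $n'(K_Z+B_Z+\bM_{\pi,Z})\sim 0$ handles the base; combining with Lemma \ref{lem:bddrcbpfindex} on a bounded resolution pins down $n$. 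The hardest step, and the one I expect to be the real obstacle, is controlling the b-Cartier index of the moduli part $\bM_\pi$ of the canonical bundle formula in a way that depends only on $d,p,\I$ — i.e. a fully effective generalized adjunction statement for g-pairs over RC bases — since the current literature (\cite{Bir19,Fil20,JLX22}) gives this only in restricted settings; a workaround is to run the whole induction under the extra hypothesis that the moduli b-divisors that arise are b-semiample with bounded index, which is itself conjectural but follows from the effective b-semiampleness conjecture, and to note that in dimensions $\le 3$ everything is unconditional by \cite[Theorem 1.6]{Jiang21} and \cite{BDCS20}.
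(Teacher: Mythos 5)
The statement you are trying to prove is a conjecture, and the paper does not claim an unconditional proof: what it actually establishes is (i) Proposition \ref{prop:reduction}, the inductive step --- assuming Conjecture \ref{conj:indexforgpair} in dimension $\le d-1$, the case where the pair is non-klt lc or $B+\bM_X\not\equiv 0$ follows by running a $K_X$-MMP to a Mori fiber space and applying an effective canonical bundle formula --- and (ii) Proposition \ref{prop:indexred}, that Conjecture \ref{conj:rccyindex} in dimension $d$ implies Conjecture \ref{conj:indexforgpair} in dimension $d$. Your proposal reproduces essentially this same two-case structure (MFS plus canonical bundle formula plus induction when $B+\bM_X\not\equiv0$; Conjecture \ref{conj:rccyindex} plus boundedness modulo flops plus Lemma \ref{lem:bddrcbpfindex} when $B+\bM_X\equiv0$), but since you explicitly invoke Conjecture \ref{conj:rccyindex} in dimension $d$ (open for $d\ge4$), what you have is the conditional implication, not a proof of the conjecture. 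You should state it as such.

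Within the conditional argument there are two substantive inaccuracies. First, the step you single out as ``the real obstacle'' --- effective control of the coefficients of $B_Z$ and of the b-Cartier index of $\bM_\pi$ --- is not an obstacle here and requires no appeal to effective b-semiampleness: after the $K_X$-MMP the map $X'\to Z$ is a Mori fiber space with $X'$ klt, hence of Fano type over $Z$, and \cite[Proposition 3.3]{HJ22} (building on \cite{Bir19}) gives exactly the needed statement: a hyperstandard set $\Phi$ and an integer $q$ depending only on $d,p$ (after using global ACC \cite[Theorem 1.6]{BZ16} to put the coefficients of $B$ in a finite set) with $B_Z\in\Phi$ and $q\bM_\pi$ b-Cartier; one then pulls back $n'q(K_Z+B_Z+\bM_{\pi,Z})\sim0$ directly, with no need to pass through boundedness of $X'$ in this case. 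Second, your opening reduction is garbled: rational connectedness does not let you ``reduce to klt'' via non-canonicity (that device is used in Proposition \ref{equivalence} for boundedness of klt Calabi--Yau varieties, and it needs Conjecture \ref{conj:1gap} to bound the extracted coefficient), and global ACC does not yield $\epsilon$-lc for an lc g-pair --- lc centers give log discrepancy $0$. The correct reduction is a dlt modification: in the non-klt case the dlt model has $B\neq0$, so one lands in the case $B+\bM_X\not\equiv0$; in the remaining case $B+\bM_X\equiv0$ forces $B=0$ and $\bM_X\equiv0$, and there Conjecture \ref{conj:rccyindex}, \cite[Theorem 1.4]{BDCS20} and Lemma \ref{lem:bddrcbpfindex} finish as you describe. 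Neither $\epsilon$-lc-ness nor boundedness of $X'$ is needed in the first case.
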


It is worth mentioning that Conjecture \ref{conj:indexforgpair} fails if we remove the ``RC'' assumption. For example, one can take $X$ to be an elliptic curve, $B=0$ and $\bM_X$ a non-torsion numerically trivial Cartier divisor on $X$.

In the rest of this paper, we show that Conjecture \ref{conj:rccyindex} is equivalent to Conjecture \ref{conj:indexforgpair}. Recall that we say a g-pair $(X, B+\bM)$ is \emph{dlt} if it is lc and there is a closed subset $V\subseteq X$ ($V$ can be equal to $X$) such that
\begin{itemize}
    \item $(X \backslash V,B|_{X \backslash V})$ is log smooth, and
    \item if $a(E, X, B+\bM)=0$, then the center of $E$ satisfies $\operatorname{Center}_X(E) \not \subset V$ and $\operatorname{Center}_X(E) \backslash V$ is an lc center of $(X \backslash V, B|_{X \backslash V})$.
\end{itemize}

\begin{prop}\label{prop:reduction}
Let $d$ be a positive integer. Assume Conjecture \ref{conj:indexforgpair} in dimension $\le d-1$. Then Conjecture \ref{conj:indexforgpair} holds in dimension $d$ if either $(X,B+\bM)$ is non-klt lc or $B+\bM_X\not\equiv0$. 
%In particular, Conjecture \ref{conj:indexforgpair} holds in dimension $4$ when $(X,B+\bM)$ is non-klt lc.
\end{prop}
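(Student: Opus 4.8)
The plan is to descend to dimension $\le d-1$ — via generalized adjunction to a log canonical center when $(X,B+\bM)$ is non-klt, and via the canonical bundle formula for a Mori fiber space when $B+\bM_X\not\equiv 0$ — then apply the inductive hypothesis on the resulting lower-dimensional g-pair, and finally propagate the torsion-triviality of the log canonical class back to $X$. As preliminary reductions I would replace $(X,B+\bM)$ by a $\Qq$-factorial dlt modification (which exists by \cite{BZ16}, is crepant, and preserves rational connectedness, the lc property, the fact that $B$ has coefficients in a DCC set depending only on $\I$, and b-Cartierness of a bounded multiple of $p\bM$), and use global ACC for g-pairs \cite{BZ16} to arrange that the coefficients of $B$ lie in a finite set depending only on $d$ and $\I$. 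Both operations change the integer $n$ we seek only by a controlled factor, since $n(K_X+B+\bM_X)\sim 0$ transfers along crepant birational contractions by pull-back and push-forward. After this reduction, if $(X,B+\bM)$ is non-klt then $\lfloor B\rfloor\neq 0$, so in particular $B+\bM_X\not\equiv 0$ as well; nevertheless the two cases are handled by the two different mechanisms above.

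\emph{Non-klt case.} Let $S$ be a component of $\lfloor B\rfloor$. Generalized adjunction \cite{BZ16} produces an lc g-pair $(S,B_S+\bM^S)$ of dimension $d-1$ with $K_S+B_S+\bM^S_S$ numerically trivial, obtained by restricting $K_X+B+\bM_X$, whose boundary coefficients lie in a finite set depending only on $d$ and $\I$ and with $p'\bM^S$ b-Cartier for some $p'$ depending only on $d$ and $p$; moreover $S$ is rationally connected. The inductive hypothesis yields a positive integer $n_S$, depending only on $d,p,\I$, with $n_S(K_S+B_S+\bM^S_S)\sim 0$. It then remains to transfer this to $X$: since $X$ is rationally connected, $\Pic^0(X)=0$ and $\Pic(X)$ is finitely generated, so for a controlled multiple $N$ the divisor $D:=N(K_X+B+\bM_X)$ is a numerically trivial Cartier divisor, hence torsion in $\Pic(X)$, and taking $N$ divisible by $n_S$ one arranges $D|_S\sim 0$; from the exact sequence $0\to\mathcal O_X(D-S)\to\mathcal O_X(D)\to\mathcal O_S\to 0$ together with a Kawamata–Viehweg-type vanishing adapted to the dlt pair $(X,(B-S)+\bM)$ — or, alternatively, a Lefschetz/connectedness argument bounding the kernel of $\Pic(X)_{\mathrm{tors}}\to\Pic(S)_{\mathrm{tors}}$ — one concludes that a controlled multiple of $D$, and hence of $K_X+B+\bM_X$, is linearly trivial.

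\emph{Case $B+\bM_X\not\equiv 0$.} Here $-K_X=B+\bM_X$ is a nonzero pseudo-effective class, while $X$, being rationally connected and hence uniruled, has $K_X$ not pseudo-effective; so a $K_X$-MMP with scaling of an ample divisor \cite{BCHM10} — which, since $K_X+B+\bM_X\equiv 0$, is $(K_X+B+\bM_X)$-trivial at every step, hence crepant and preserving the lc property, rational connectedness and the coefficient data — terminates with a Mori fiber space $\pi\colon X'\to Z$, $\dim Z<d$, with $(X',B'+\bM)$ lc, $K_{X'}+B'+\bM_{X'}\equiv 0$, and both $X'$ and $Z$ rationally connected. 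If $\dim Z\ge 1$, the canonical bundle formula for $(X',B'+\bM)$ over $Z$ gives an lc g-pair $(Z,B_Z+\bM_\pi)$ of dimension $<d$ with $K_Z+B_Z+\bM_{\pi,Z}\equiv 0$ and $Z$ rationally connected; the coefficients of $B_Z$ lie in a DCC set and $p''\bM_\pi$ is b-Cartier for controlled $p''$, and there is a controlled integer $r$ with $r(K_{X'}+B'+\bM_{X'})\sim r\,\pi^*(K_Z+B_Z+\bM_{\pi,Z})$ — the relevant bounds being governed by $d,p,\I$ and by the index of the general fiber, itself a log Calabi–Yau g-pair of dimension $<d$ to which the inductive hypothesis applies. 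The inductive hypothesis on $Z$ then gives $n_0(K_Z+B_Z+\bM_{\pi,Z})\sim 0$, whence $\lcm(r,n_0)(K_{X'}+B'+\bM_{X'})\sim 0$, and pulling back along the crepant birational map $X\dashrightarrow X'$ yields the assertion for $(X,B+\bM)$. If $\dim Z=0$, then $X'$ is a $\Qq$-factorial klt Fano variety with $\rho(X')=1$ and $-K_{X'}\equiv B'+\bM_{X'}$ big; choosing a general effective $\Qq$-divisor $G\sim_\Qq B'+\bM_{X'}$ with a point of sufficiently high multiplicity so that $(X',G)$ is lc but not klt (possible since $(-K_{X'})^d>0$), one reduces to the non-klt case already treated and transfers back using $G\sim_\Qq B'+\bM_{X'}$.

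The step I expect to be the main obstacle is the transfer of the index bound out of the log canonical center $S$ (equivalently, the analogous transfer in the $\dim Z=0$ reduction): since $S$ is only pseudo-effective and not big, controlling the kernel of $\Pic(X)_{\mathrm{tors}}\to\Pic(S)_{\mathrm{tors}}$ requires either a vanishing theorem tailored to the non-klt boundary or a topological input of Lefschetz/connectedness type, and I would expect this to be the delicate point of the whole argument. By comparison, the bookkeeping for the discriminant coefficients and for the denominators of the moduli part in the canonical bundle formula, though technical, follows from known effective versions of the formula once the inductive bound on the indices of the fibers is in hand.
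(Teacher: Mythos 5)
Your proposal has a genuine gap, and it sits exactly where you predicted: the transfer of the index from the lc center $S$ back to $X$ in your non-klt branch. Neither of the mechanisms you suggest works as stated. A Kawamata--Viehweg-type vanishing for $H^1(X,\cO_X(D-S))$ would require positivity of $D-S-(K_X+\cdots)$ that is simply not available here ($D\equiv 0$ and $-S$ is anti-effective), and there is no Lefschetz- or connectedness-type statement controlling $\Ker\bigl(\Pic(X)_{\mathrm{tors}}\to\Pic(S)_{\mathrm{tors}}\bigr)$ for a non-ample lc center; this lifting step is precisely the hard core of index conjectures attacked via adjunction (where pluricanonical representations enter in the literature), so it cannot be left as a black box. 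The decisive point --- which you actually observe and then decline to exploit --- is that this branch is unnecessary: after the dlt modification, non-klt forces $\lfloor B\rfloor\neq 0$, hence $B+\bM_X\not\equiv 0$, so the paper routes \emph{both} cases through the same mechanism ($K_X$ is not pseudo-effective, run a $K_X$-MMP ending in a Mori fiber space $\pi:X'\to Z$) and never performs adjunction to an lc center.

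The second defect is your treatment of $\dim Z=0$. Reducing it to the non-klt case by choosing $G\sim_\Qq B'+\bM_{X'}$ with $(X',G)$ lc but not klt is circular (that is the case carrying the gap), and in any event the coefficients of such a $G$ do not lie in a controlled DCC set, so Conjecture \ref{conj:indexforgpair} in lower dimension would not apply to the resulting adjunction data; nor is the denominator in $G\sim_\Qq B'+\bM_{X'}$ bounded. The correct input is the theory of complements: since $X'$ is of Fano type over a point, $B'$ has coefficients in a finite set (after global ACC) and $p\bM$ is b-Cartier, \cite[Theorem 1.10]{Bir19} directly gives a bounded $n_1$ with $n_1(K_{X'}+B'+\bM_{X'})\sim 0$. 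With these two repairs, your $\dim Z\ge 1$ argument --- effective canonical bundle formula with hyperstandard discriminant coefficients and a moduli part of bounded b-Cartier index as in \cite[Proposition 3.3]{HJ22}, induction applied to the rationally connected base $Z$, then pulling back along the crepant maps --- coincides with the paper's proof.
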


\begin{proof}
Possibly replacing $(X,B+\bM)$ with a dlt modification (see \cite[Proposition 3.9]{HL18}), we may assume that $(X,B+\bM)$ is  $\Qq$-factorial and dlt. Moreover, $B+\bM_{X}\not\equiv0$ by assumption. In particular, $K_X\equiv-(B+\bM_X)$ is not pseudo-effective. Therefore we can run an MMP on $K_X$ which terminates with a Mori fiber space $\pi:X'\to Z$. Since $X$ is klt, so is $X'$ and hence $X'$ is of Fano type over $Z$. Denote by $B'$ the push-forward of $B$ on $X'$.

According to the Global ACC for g-pairs (see \cite[Theorem 1.6]{BZ16}), there exists a finite set $\I_0\subseteq\I$ which only depends on $d$ and $p$ such that $B\in\I_0$. Possibly replacing $p$ by a multiple, we may assume that $p\I_0\subseteq\Zz$. 

If $Z$ is a point, then by \cite[Theorem 1.10]{Bir19}, there exists a positive integer $n_1$ which only depends on $d$ and $p$ such that $n_1(K_{X'}+B'+\bM_{X'})\sim0$. Therefore $n_1(K_X+B+\bM_X)\sim0$ as $(X,B+\bM)$ and $(X',B'+\bM)$ are crepant. In the following, we may assume that $\dim Z\ge1$.

Now $K_{X'}+B'+\bM_{X'}\sim_{\Qq,Z}0$, $pB'$ is integral and $p\bM$ is b-Cartier. By \cite[Proposition 3.3]{HJ22}, there exists a hyperstandard set $\Phi\subseteq[0,1]\cap\Qq$ and a positive integer $q$ depending only on $d$ and $p$ such that there exists an lc g-pair $(Z,B_Z+\bM_\pi)$ such that
$$q(K_{X'}+B'+\bM_{X'}) \sim q\pi^{*}(K_{Z}+B_Z+\bM_{\pi,Z})\equiv0,$$
$B_Z\in\Phi$ and $q\bM_\pi$ is b-Cartier. Recall that $X$ is rationally connected, $Z$ is also rationally connected. Now by our assumption, one can find a positive integer $n_2$ depending only on $d-1,\Phi$ and $q$ such that $n_2(K_Z+B_Z+\bM_{\pi,Z})\sim0$. Then
$$0\sim n_2q\pi^*(K_Z+B_Z+\bM_{\pi,Z})\sim n_2q(K_X+B+\bM_X).$$
Therefore $n:=n_1n_2q$ has the required property. The proof is finished.
\end{proof}

\begin{prop}\label{prop:indexred}
Conjecture \ref{conj:rccyindex} in dimension $d$ implies Conjecture \ref{conj:indexforgpair} in dimension $d$.
\end{prop}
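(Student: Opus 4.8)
The goal is to prove Proposition \ref{prop:indexred}: Conjecture \ref{conj:rccyindex} in dimension $d$ implies Conjecture \ref{conj:indexforgpair} in dimension $d$. Combined with Proposition \ref{prop:reduction}, the plan is to reduce to the remaining case where $(X,B+\bM)$ is klt and $B+\bM_X\equiv0$, so that in particular $B+\bM_X$ is numerically trivial; we then want to produce from $(X,B+\bM)$ an honest klt Calabi-Yau RC variety (with no boundary and no moduli part) to which Conjecture \ref{conj:rccyindex} applies, and transfer the index bound back.

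First I would invoke Proposition \ref{prop:reduction} (using Conjecture \ref{conj:indexforgpair} in dimension $\le d-1$, which follows by induction from Conjecture \ref{conj:rccyindex} in lower dimensions, itself a special case of Conjecture \ref{conj:indexforgpair}) to dispose of the non-klt lc case and the case $B+\bM_X\not\equiv 0$. So we may assume $(X,B+\bM)$ is klt and $K_X+B+\bM_X\equiv 0$ with $B\in\I$ a DCC set, $p\bM$ b-Cartier. By the Global ACC for g-pairs (\cite[Theorem 1.6]{BZ16}) the coefficients of $B$ lie in a finite set $\I_0$ depending only on $d,p$, so after replacing $p$ by a multiple we may assume $pB$ is integral. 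Next, since $\bM$ is a nef b-$\Rr$-divisor with $p\bM$ b-Cartier and the whole pair is numerically trivial, I would run an MMP to make $\bM$ descend nicely: more precisely, pass to a $\Qq$-factorial dlt (here klt) model and run a suitable MMP so that $\bM_X$ becomes semiample over the appropriate base; but since we are in the absolute Calabi-Yau case the cleanest route is to use the canonical bundle formula / abundance-type input for the trivial fibration. The key structural step is to show that $\bM$ itself is torsion: because $X$ is RC and $K_X+B+\bM_X\equiv 0$, one runs an MMP on $K_X+B$ (which is anti-nef numerically, being $\equiv -\bM_X$ with $\bM_X$ nef after descending) to reach a Mori fiber space or a minimal model, and either way an inductive argument on relative dimension together with Conjecture \ref{conj:rccyindex} in the base dimension pins down the index.

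The cleanest plan: pick a $\Qq$-factorialization and then run an MMP on $K_X$. If $K_X$ is not pseudoeffective we land on a Mori fiber space $X'\to Z$ with $\dim Z<d$, and $Z$ is RC; applying the canonical bundle formula (\cite[Proposition 3.3]{HJ22}) we get a lower-dimensional lc g-pair $(Z,B_Z+\bM_\pi)$ with $B_Z$ hyperstandard and $q\bM_\pi$ b-Cartier for controlled $q$, and by the inductive case of Conjecture \ref{conj:indexforgpair} in dimension $\le d-1$ (which we already know from Conjecture \ref{conj:rccyindex} in lower dimensions) we bound its index, hence the index of $K_X+B+\bM_X$ — but this is exactly what Proposition \ref{prop:reduction} already covers, so this sub-case reduces to $B+\bM_X\not\equiv 0$, done. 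The genuinely new content is when $K_X$ IS pseudoeffective together with $B+\bM_X\equiv 0$; then run a $K_X$-MMP to a minimal model, $K_X$ becomes nef, and since $K_X+(B+\bM_X)\equiv 0$ with $B\ge 0$ and $\bM_X$ nef, numerically $K_X\equiv -(B+\bM_X)$ forces $B=0$, $\bM_X\equiv 0$, and $K_X\equiv 0$. So after the MMP (which preserves the RC property and only improves/preserves the index divisibility up to bounded factors via \cite[Theorem 1.4]{Bir18}-type boundedness or directly via crepant equivalence) we reach $X'$ with $K_{X'}\equiv 0$, klt, RC, $B'=0$, $\bM_{X'}\equiv 0$: apply Conjecture \ref{conj:rccyindex} to $X'$ to get $n K_{X'}\sim 0$ for $n=n(d)$; pull this back along the birational contraction and use that $\bM$ is then b-torsion (a numerically trivial b-divisor on an RC variety with bounded Cartier index is torsion with bounded order, e.g. via Lemma \ref{lem:bddrccyindex}-style argument or directly because $\bM_{X'}\equiv 0$ b-nef forces $\bM_{X'}$ torsion after passing to the descent model) to conclude $n'(K_X+B+\bM_X)\sim 0$ with $n'$ depending only on $d,p,\I$.

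The main obstacle I anticipate is controlling the moduli/nef b-part $\bM$: one needs $\bM_X$ to actually be torsion (not merely numerically trivial) with a bound on the order depending only on $d$ and $p$, and to transfer linear equivalence across the MMP steps and the birational contractions. On an RC variety numerical triviality of a Cartier divisor forces torsion with order bounded in terms of $H^2(\cdot,\Zz)$, but here $\bM$ lives on a b-divisorial model and $p\bM$ is only b-Cartier, so the argument requires passing to the model where $\bM$ descends, checking that model is still RC (it is, being birational to $X$), applying \cite[Corollary 4.3]{BDCS20}/\cite[Theorem 1.10]{Bir19}-type effective results or Lemma-style torsion bounds there, and pushing down — this bookkeeping with b-divisors, together with ensuring every constant depends only on $d,p,\I$ and not on $X$, is where the real care is needed. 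Everything else (the MMP existence, the RC-invariance, the reduction to the klt numerically-trivial case via Proposition \ref{prop:reduction}) is standard given the cited results.
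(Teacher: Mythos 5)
Your proposal is correct and follows essentially the same route as the paper: use Proposition \ref{prop:reduction} to dispose of the case $B+\bM_X\not\equiv0$, observe that otherwise $B=0$ and $\bM_X\equiv0$ so that Conjecture \ref{conj:rccyindex} yields $n_2K_X\sim0$, and then bound the torsion order of $\bM_X$ (the intermediate MMP discussion is superfluous, since $B+\bM_X\equiv0$ with $B$ effective and $\bM_X$ pseudo-effective already forces $B=0$ and $\bM_X\equiv0$). The step you flag as the remaining obstacle is completed in the paper exactly along the lines you indicate: $n_2K_X\sim0$ places $X$ in a family bounded in codimension one by \cite[Theorem 1.4]{BDCS20}, on which the numerically trivial $\bM_X$ has bounded Cartier index by \cite[Lemma 2.25]{Bir19}, and Lemma \ref{lem:bddrcbpfindex} then bounds its torsion order.
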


\begin{proof}
If $d=1$, then $X$ is indeed $\Pp^1$ as $X$ is rationally connected by our assumption. In this case, Conjecture \ref{conj:indexforgpair} is clear. In what follows, we may assume that $d\ge2$.

Possibly replacing $(X,B+\bM)$ by a dlt modification, we may assume that $X$ is $\Qq$-factorial. In particular, $X$ is klt. If $B+\bM_X\not\equiv0$, then there exists a positive integer $n_1$ has the required property by Proposition \ref{prop:reduction}.

Now assume that $B=0$ and $\bM_X\equiv0$, then $K_X\sim_\Qq0$. As we assume Conjecture \ref{conj:rccyindex} in dimension $d$, we may find a positive integer $n_2$ which only depends on $d$ such that $n_2K_X\sim0$. According to \cite[Theorem 1.4]{BDCS20}, there exists a bounded family $\mathcal{P}$ depending only on $d$ and $n_2$ such that $X$ and $X'$ are isomorphic in codimenison one, for some $X'\in\mathcal{P}$. Possibly replacing $X$ with $X'$, we may assume that $X$ belongs to the bounded family $\mathcal{P}$. Since $\bM_X\equiv0$, there exists a positive integer $l_0$ depending only on $\mathcal{P}$ such that $l_0\bM_X$ is Cartier, cf. \cite[Lemma 2.25]{Bir19}. Moreover, by Lemma \ref{lem:bddrcbpfindex}, there exists a positve integer $n_3$ which only depends on $\mathcal{P}$ such that $n_3l_0\bM_X\sim0$. It follows that $n_2n_3l_0(K_X+\bM_X)\sim0$.

Therefore $n:=n_1n_2n_3l_0$ has the required property.
\end{proof}

\begin{cor}
Conjecture \ref{conj:indexforgpair} holds in the following cases.
\begin{enumerate}
  \item $d\le3$.
  \item $d=4$ and either $(X,B+\bM)$ is non-klt lc or $B+\bM_X\not\equiv0$.
\end{enumerate}
\end{cor}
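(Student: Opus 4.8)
The plan is to deduce the corollary formally from the two reduction statements Proposition \ref{prop:indexred} and Proposition \ref{prop:reduction} of this section, once the low-dimensional cases of Conjecture \ref{conj:rccyindex} have been secured from the literature.

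First I would check that Conjecture \ref{conj:rccyindex} holds in dimension $\le 3$. In dimension $1$ this is vacuous, since a rationally connected curve is $\Pp^1$, which is not Calabi-Yau. In dimension $2$, a klt rationally connected Calabi-Yau surface $X$ is rational and satisfies $K_X\equiv 0$; by global ACC \cite{HMX14} it is $\epsilon$-lc for a universal $\epsilon>0$, hence lies in a bounded family by \cite{Ale94}, so some bounded multiple $mK_X$ is Cartier and numerically trivial and therefore $nmK_X\sim 0$ for a bounded $n$ by Lemma \ref{lem:bddrcbpfindex}. In dimension $3$, every klt rationally connected Calabi-Yau threefold is bounded modulo flops by \cite[Theorem 1.6]{Jiang21}, which is equivalent to the bounded index statement by Proposition \ref{equivalence} (cf.\ \cite[Corollary 4.3]{BDCS20}). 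Alternatively one may cite this paper's own Theorem \ref{thm:gcybddforsurf} and its threefold analogue, which give Conjecture \ref{conj:gcybdd} in dimension $\le 3$ and hence, specializing to $B=0$, $\bM=\mathbf 0$, Conjecture \ref{conj:bddrccy} in dimension $\le 3$.

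Then part (1) follows by feeding ``Conjecture \ref{conj:rccyindex} in dimension $d$'' for $d\le 3$ into Proposition \ref{prop:indexred}. For part (2), I would apply Proposition \ref{prop:reduction} with $d=4$: its hypothesis ``Conjecture \ref{conj:indexforgpair} in dimension $\le 3$'' is exactly part (1), and its conclusion is precisely the assertion of (2). I do not expect a genuine obstacle here: the whole content sits in Propositions \ref{prop:indexred} and \ref{prop:reduction} together with the already-known boundedness of rationally connected klt Calabi-Yau varieties in dimension $\le 3$; the only mild point is verifying that such surfaces form a uniformly $\epsilon$-lc bounded family so that Lemma \ref{lem:bddrcbpfindex} applies, which is classical.
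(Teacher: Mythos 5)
Your proposal is correct and follows essentially the same route as the paper: part (1) via Proposition \ref{prop:indexred} once Conjecture \ref{conj:rccyindex} is known in dimension $\le 3$ (the threefold case by \cite{Jiang21}), and part (2) via Proposition \ref{prop:reduction} applied with $d=4$; indeed you correctly identify Proposition \ref{prop:reduction} as the input for (2), where the paper's text cites Proposition \ref{prop:indexred}, apparently a typo. One small caveat: the uniform $\epsilon$-lc-ness of klt rationally connected Calabi--Yau surfaces does not follow from global ACC for coefficients as you state, but rather from the classical bound on the global index of klt Calabi--Yau surfaces (equivalently their boundedness), which is the fact you need before invoking \cite{Ale94} and Lemma \ref{lem:bddrcbpfindex}.
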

\begin{proof}
By \cite{Jiang21}, Conjecture \ref{conj:rccyindex} holds in dimension $3$, thus we have (1). (2) follows from (1) and Propostion \ref{prop:indexred} immediately.
\end{proof}

As a consequence, the following result holds. Note that in the following corollary, we do not require that the DCC coefficients are rational numbers, see \cite[Theorem 1.9]{HLL22} for a more general statement for threefold pairs.

\begin{cor}
Let $d,p$ be two positive integers, and $\I\subseteq[0,1]$ a DCC set. Assume that either $d\le3$ or Conjecture \ref{conj:rccyindex} holds in dimension $d$. Then the set of projective normal rationally connected varieties $X$ such that
\begin{enumerate}
    \item $(X, B+\bM)$ is klt g-pair of dimension $d$ for some boundary $B$ and some nef part $\bM$,
    \item $B \in \I$ and $p \bM$ is b-Cartier, and
    \item $K_X+B+\bM_X\equiv0$,
\end{enumerate}
is bounded in codimension one.
\end{cor}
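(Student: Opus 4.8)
The goal is to show that the class of rationally connected $X$ admitting a klt g-pair structure $(X,B+\bM)$ of dimension $d$ with $B\in\I$, $p\bM$ b-Cartier, and $K_X+B+\bM_X\equiv 0$ is bounded in codimension one, under the hypothesis that either $d\le 3$ or Conjecture \ref{conj:rccyindex} holds in dimension $d$. The plan is to reduce numerical triviality to $\Rr$-linear (indeed $\Qq$-linear) triviality using the index bound, and then invoke the already-established boundedness results of Section \ref{sec4} together with the input of \cite{BDCS20}.

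First I would observe that by the previous corollary together with Proposition \ref{prop:indexred} (and \cite{Jiang21} in the case $d\le 3$), Conjecture \ref{conj:indexforgpair} holds in dimension $d$ under our hypotheses. Hence there exists a positive integer $n$, depending only on $d$, $p$, and $\I$, such that $n(K_X+B+\bM_X)\sim 0$; in particular $K_X+B+\bM_X\sim_\Rr 0$, so each such $X$ genuinely carries an $\Rr$-linearly trivial g-pair structure. Before doing this I would first use the Global ACC for g-pairs \cite[Theorem 1.6]{BZ16} to replace $\I$ by a finite subset $\I_0\subseteq\I$ depending only on $d$ and $p$, so that the coefficients of $B$ lie in a fixed finite set; this is harmless since enlarging to a DCC set cannot decrease the family, and it lets us pass to a uniform index and (after enlarging $p$) assume $pB$ is integral. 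I would also reduce to the $\Qq$-factorial klt case by passing to a small $\Qq$-factorialization, which is an isomorphism in codimension one and hence irrelevant for boundedness in codimension one.

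Next comes the main reduction. The g-pair $(X,B+\bM)$ need not be $\epsilon$-lc for a fixed $\epsilon$, so I cannot directly quote Proposition \ref{1.2 to 1.4}; instead I would run the argument of its proof by hand. If $B+\bM_X\not\equiv 0$, run a $K_X$-MMP (valid since $X$ is klt, hence of Fano type over the base of any fibration it produces, and $K_X\equiv-(B+\bM_X)$ is not pseudoeffective) to reach a Mori fiber space $\pi':X'\to Z$ with $X'$ isomorphic to $X$ in codimension one. The base $Z$ is again rationally connected of dimension $<d$. Applying the canonical bundle formula for g-pairs via \cite[Proposition 3.3]{HJ22}, one gets an lc g-pair $(Z,B_Z+\bM_{\pi'})$ with $B_Z$ in a fixed hyperstandard set and $q\bM_{\pi'}$ b-Cartier for a fixed $q$; by induction on $d$ this lower-dimensional class is bounded in codimension one, so $Z$ is, and then Lemma \ref{lem:mfsbdd} gives that $X'$ is bounded in codimension one, whence so is $X$ by Lemma \ref{lem:bircontbdd}. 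If instead $B+\bM_X\equiv 0$, then $K_X\sim_\Qq 0$, and since $n K_X\sim 0$ with $n$ uniform we are exactly in the situation of \cite[Theorem 1.4]{BDCS20}, which yields boundedness modulo flops directly.

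The step I expect to be the genuine obstacle is the absence of a uniform $\epsilon$: Lemmas \ref{lem:mfsbdd} and \ref{lem:bircontbdd} as stated take an $\epsilon$-lc hypothesis, so to use them I must first produce a fixed $\epsilon>0$. This I would extract from Global ACC for g-pairs together with the finiteness of $\I_0$ and the bounded index $p$ — the minimal log discrepancies of the $(X,B+\bM)$ in our family (equivalently, of the klt g-pairs appearing after the MMP) are bounded below by a positive constant depending only on $d,p,\I$ — exactly as in the proof of Proposition \ref{equivalence} where global ACC was invoked to upgrade a lower bound on coefficients to an $\epsilon$-lc statement. Once that uniform $\epsilon$ is in hand, the rest is an assembly of Lemmas \ref{lem:bircontbdd}, \ref{lem:mfsbdd}, the inductive hypothesis, and \cite[Theorem 1.4]{BDCS20}, all of which are available to us.
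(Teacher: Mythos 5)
Your route differs from the paper's, which is much shorter: after the two preparatory steps (rational perturbation of the boundary and the uniform index), it simply cites \cite[Theorem 1.4]{HJ22} on birational boundedness of rationally connected log Calabi--Yau pairs with fixed index, with no need to rerun the Mori-fiber-space/canonical-bundle-formula induction of Proposition \ref{1.2 to 1.4}. Your longer assembly is in principle viable (it essentially reproves a special case of that theorem), but as written it has two concrete gaps.

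First, the reduction to a finite \emph{rational} coefficient set. Conjecture \ref{conj:indexforgpair} is stated for $\I\subseteq[0,1]\cap\Qq$, whereas the corollary allows an arbitrary DCC set $\I\subseteq[0,1]$. Global ACC \cite[Theorem 1.6]{BZ16} only yields a finite subset $\I_0\subseteq\I$, which may still contain irrational numbers, so your claim that after enlarging $p$ one may assume $pB$ is integral fails, and the index conjecture cannot be applied to $(X,B+\bM)$ itself. What is needed --- and what the paper does --- is the uniform rational polytope technique (\cite[Remark 5.7]{CHL22}, cf. \cite{HLS19}): one replaces $B$ by a \emph{new} boundary $B'$ with coefficients in a finite rational set depending only on $d,p,\I$ such that $(X,B'+\bM)$ is still klt with $K_X+B'+\bM_X\equiv0$, and applies Conjecture \ref{conj:indexforgpair} to that pair.

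Second, the uniform $\epsilon$. Global ACC controls the coefficients of the boundary, not the log discrepancies of exceptional divisors; a uniform lower bound on minimal log discrepancies of klt rationally connected Calabi--Yau varieties is essentially the content of Conjectures \ref{conj:1gap}--\ref{conj:bddrccy}, not a consequence of \cite{HMX14}, so your stated mechanism does not work. The correct source of the uniform $\epsilon$ is the index you have already produced: if $n(K_X+B'+\bM_X)\sim0$ with $p\mid n$ and $p\bM$ b-Cartier, then on a model where $\bM$ descends the crepant pullback of $B'$ has coefficients in $\frac{1}{n}\Zz$, hence all log discrepancies lie in $\frac{1}{n}\Zz_{>0}$ and the g-pair is $\frac{1}{n}$-lc. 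With that correction and the rational perturbation above, your combination of Lemmas \ref{lem:bircontbdd} and \ref{lem:mfsbdd}, induction on dimension, and \cite[Theorem 1.4]{BDCS20} does go through, but the paper's direct appeal to \cite[Theorem 1.4]{HJ22} avoids all of it.
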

\begin{proof}
By \cite[Remark 5.7]{CHL22} (cf. \cite{HLS19}), there exists a finite set $\I_0\subseteq[0,1]\cap\Qq$ depending only on $d,p$ and $\I$ such that we may find a boundary $B'$ such that $(X,B'+\bM)$ is klt g-pair and $K_X+B'+\bM_X\equiv0$. Since we assume Conjecture \ref{conj:rccyindex} in dimension $d$, Conjecture \ref{conj:indexforgpair} also holds in dimensiond $d$ by Proposition \ref{prop:indexred}. Therefore one can find a positive integer $p|n$ which only on $d,p$ and $\I_0$ such that $n(K_X+B'+\bM_X)\sim0$. We conclude the statement by \cite[Theorem 1.4]{HJ22}.
\end{proof}

Finally, we remark here that the Conjecture \ref{conj:rccyindex} is expected for lc log Calabi-Yau varieties, that is, we do not need the varieties to be RC, cf. \cite{XYN19,JL21}.

\newcommand{\etalchar}[1]{$^{#1}$}


\begin{thebibliography}{CDCH{\etalchar{+}}21}

\bibitem[Ale94]{Ale94}
Valery Alexeev.
\newblock Boundedness and K$^2$ for log surfaces.
\newblock {\em Internat. J. Math.}, 5(6):779–810, 1994.

\bibitem[Amb05]{Amb05}
Florin Ambro.
\newblock The modulib-divisor of an lc-trivial fibration.
\newblock {\em Compos. Math.}, 140(2):385--403, 2005.

\bibitem[BCHM10]{BCHM10}
Caucher Birkar, Paolo Cascini, Christopher~D. Hacon, and James
  M\textsuperscript{c}Kernan.
\newblock Existence of minimal models for varieties of log general type.
\newblock {\em J. Amer. Math. Soc.}, 23(2):405--468, 2010.

\bibitem[BDCS20]{BDCS20}
Caucher Birkar, Gabriele Di~Cerbo, and Roberto Svaldi.
\newblock Boundedness of elliptic {C}alabi--{Y}au varieties with a rational
  section.
\newblock {\em arXiv:2010.09769}, 2020.

\bibitem[Bir16]{Bir16}
Caucher Birkar.
\newblock Singularities on the base of a {F}ano type fibration.
\newblock {\em J. Reine Angew. Math.}, 715:125--142, 2016.

\bibitem[Bir18]{Bir18}
Caucher Birkar.
\newblock Log {C}alabi-{Y}au fibrations.
\newblock {\em arXiv:1811.10709}, 2018.

\bibitem[Bir19]{Bir19}
Caucher Birkar.
\newblock Anti-pluricanonical systems on {F}ano varieties.
\newblock {\em Ann. of Math. (2)}, 190(2):345--463, 2019.

\bibitem[Bir21]{Bir21}
Caucher Birkar.
\newblock Singularities of linear systems and boundedness of {F}ano varieties.
\newblock {\em Ann. of Math. (2)}, 193:347--405, 2021.

\bibitem[Bir23]{Bir23}
Caucher Birkar.
\newblock Singularities on Fano fibrations and beyond.
\newblock {\em arXiv:2305.18770v2}, 2023.

\bibitem[BZ16]{BZ16}
Caucher Birkar and De-Qi Zhang.
\newblock Effectivity of {I}itaka fibrations and pluricanonical systems of
  polarized pairs.
\newblock {\em Publ. Math. Inst. Hautes \'Etudes Sci.}, 123:283--331, 2016.

\bibitem[CDCH{\etalchar{+}}21]{CDCHJS21}
Weichung Chen, Gabriele Di~Cerbo, Jingjun Han, Chen Jiang, and Roberto Svaldi.
\newblock Birational boundedness of rationally connected {C}alabi-{Y}au
  3-folds.
\newblock {\em Adv. Math.}, 378:107541, 32, 2021.

\bibitem[CHL22]{CHL22}
Guodu Chen, Jingjun Han, and Jihao Liu.
\newblock Uniform rational polytopes for {I}itaka dimensions.
\newblock {\em arXiv:2208.04663}, 2022.

\bibitem[DCS21]{DCS16}
Gabriele Di~Cerbo and Roberto Svaldi.
\newblock Birational boundedness of low-dimensional elliptic {C}alabi-{Y}au
  varieties with a section.
\newblock {\em Compos. Math.}, 157(8):1766--1806, 2021.

\bibitem[FG12]{FG12}
Osamu Fujino and Yoshinori Gongyo.
\newblock On canonical bundle formulas and subadjunctions.
\newblock {\em Michigan Math. J.}, 61(2):255--264, 2012.


\bibitem[Fil20]{Fil20}
Stefano Filipazzi.
\newblock On a generalized canonical bundle formula and generalized adjunction.
\newblock {\em Ann. Sc. Norm. Super. Pisa Cl. Sci.}, 21(5):1187--1221, 2020.

\bibitem[HJ22]{HJ22}
Jingjun Han and Chen Jiang.
\newblock Birational boundedness of rationally connected log {C}alabi–{Y}au
  pairs with fixed index.
\newblock {\em arXiv:2204.04946}, 2022.

\bibitem[HJL22]{HJL22}
Jingjun Han, Chen Jiang, and Yujie Luo.
\newblock Shokurov’s conjecture on conic bundles with canonical
  singularities.
\newblock {\em Forum of Mathematics, Sigma}, 10:e38, 2022.

\bibitem[HL22]{HL18}
Jingjun Han and Zhan Li.
\newblock Weak {Z}ariski decompositions and log terminal models for generalized
  polarized pairs.
\newblock {Math. Z.}, 302(2): 707--741, 2022.


\bibitem[HL20]{HanLiu20a}
Jingjun Han and Wenfei Liu.
\newblock On numerical nonvanishing for generalized log canonical pairs.
\newblock {\em Doc. Math.}, 25:93–123, 2020.

\bibitem[HL21]{HanLiu20}
Jingjun Han and Wenfei Liu.
\newblock On a generalized canonical bundle formula for generically finite
  morphisms.
\newblock {\em Annales de l'Institut Fourier}, 71(5):2047--2077, 2021.

\bibitem[HLL22]{HLL22}
Jingjun Han, Jihao Liu, and Yujie Luo.
\newblock {ACC} for minimal log discrepancies of terminal threefolds.
\newblock {\em arXiv:2202.05287}, 2022.

\bibitem[HLS19]{HLS19}
Jingjun Han, Jihao Liu, and Vyacheslav~V. Shokurov.
\newblock {ACC} for minimal log discrepancies of exceptional singularities.
\newblock {\em arXiv:1903.04338v1}, 2019.

\bibitem[HMX14]{HMX14}
Christopher~D. Hacon, James M\textsuperscript{c}Kernan, and Chenyang Xu.
\newblock A{CC} for log canonical thresholds.
\newblock {\em Ann. of Math. (2)}, 180(2):523--571, 2014.

\bibitem[HMX18]{HMX18}
Christopher~D. Hacon, James M\textsuperscript{c}Kernan, and Chenyang Xu.
\newblock Boundedness of moduli of varieties of general type.
\newblock {\em J. Eur. Math. Soc.}, 20(4):865--901, 2018.

\bibitem[HX15]{HX15}
Christopher~D. Hacon and Chenyang Xu.
\newblock Boundedness of log {C}alabi-{Y}au pairs of {F}ano type.
\newblock {\em Math. Res. Lett.}, 22(6):1699--1716, 2015.

\bibitem[Jia21]{Jiang21}
Chen Jiang.
\newblock A gap theorem for minimal log discrepancies of noncanonical
  singularities in dimension three.
\newblock {\em J. Algebraic Geom.}, 30:759--800, 2021.

\bibitem[JL21]{JL21}
Chen Jiang and Haidong Liu.
\newblock Boundedness of log pluricanonical representations of log
  {C}alabi-{Y}au pairs in dimension 2.
\newblock {\em Algebra Number Theory}, 15(2):545--567, 2021.

\bibitem[JLX22]{JLX22}
Junpeng Jiao, Jihao Liu, and Lingyao Xie.
\newblock On generalized lc pairs with b-log abundant nef part.
\newblock {\em arXiv:2202.11256}, 2022.

\bibitem[KM98]{KM98}
J\'anos Koll\'ar and Shigefumi Mori.
\newblock {\em Birational geometry of algebraic varieties}, volume 134 of {\em
  Cambridge Tracts in Mathematics}.
\newblock Cambridge University Press, Cambridge, 1998.
\newblock With the collaboration of C. H. Clemens and A. Corti, Translated from
  the 1998 Japanese original.

\bibitem[MP04]{MP04}
James M\textsuperscript{c}Kernan and Yuri Prokhorov.
\newblock Threefold thresholds.
\newblock {\em Manuscripta Math.}, 114(3):281--304, 2004.

\bibitem[Nak04]{Nak04}
Noboru Nakayama.
\newblock {\em Zariski-decomposition and abundance}, volume~14 of {\em MSJ
  Memoirs}.
\newblock Mathematical Society of Japan, Tokyo, 2004.

\bibitem[PS09]{PS09}
Yuri~G. Prokhorov and Vyacheslav~V. Shokurov.
\newblock Towards the second main theorem on complements.
\newblock {\em J. Algebraic Geom.}, 18(1):151--199, 2009.


\bibitem[Xu19]{XYN19}
Yanning Xu.
\newblock Some results about the index conjecture for log {C}alabi-{Y}au pairs.
\newblock {\em arXiv:1905.00297}, 2019.

\bibitem[Xu21]{Xu21}
Chenyang Xu.
\newblock K-stability of {F}ano varieties: an algebro-geometric approach.
\newblock {\em EMS Surv. Math. Sci.}, 8(1-2):265--354, 2021.

\end{thebibliography}
\end{document}